\documentclass[a4paper,11pt]{article}
\usepackage{amsfonts}
\usepackage{latexsym}
\usepackage{amsmath}
\usepackage{amssymb}
\usepackage{amssymb}
\usepackage{slashed}
\usepackage{upgreek}
\usepackage{mathrsfs}
\usepackage{bbm}




\usepackage{amsthm}

\theoremstyle{remark}

\newtheorem*{rmk*}{Remark}

\theoremstyle{remark}
\newtheorem*{remark}{Remark}

\usepackage[utf8]{inputenc}
\usepackage[english]{babel}

\theoremstyle{definition}

\newtheorem{theorem}{Theorem}[section]
\newtheorem{corollary}{Corollary}[section]
\newtheorem{lemma}[theorem]{Lemma}

\newtheorem{prop}{{\bf Proposition}}[section]



\allowdisplaybreaks

\usepackage{relsize}
\usepackage{graphicx}
\usepackage{comment}


\hoffset=-25pt          
\voffset=-1.5cm \textwidth=16cm \textheight=23cm

\renewcommand{\thefootnote}{\fnsymbol{footnote}}

\def\appendix#1{\addtocounter{section}{1}\setcounter{equation}{0}
\renewcommand{\thesection}{\Alph{section}}
\section*{Appendix \thesection\protect\indent \parbox[t]{11.15cm}{#1}}
\addcontentsline{toc}{section}{Appendix \thesection\ \ \ #1}}

\font\mybb=msbm10 at 11pt

\def\bb#1{\hbox{\mybb#1}}

\def\bZ {\bb{Z}}

\def\ma{\mathrm{a}}
\def\mb{\mathrm{b}}

\def\asd{\mathrm{asd}}
\def\HE{\mathrm{HE}}

\def\rsq {]\kern -2.0pt]}
\def\lsq {[\kern -2.0pt[}

\def\h{\widehat}
\def\be{\begin{equation}}
\def\ee{\end{equation}}

\def\ii{\iota}
\def\Adj{\mathrm{Ad}}

\def\rsq {]\kern -2.0pt]}
\def\lsq {[\kern -2.0pt[}

\newcommand{\bea}{\begin{eqnarray}}
\newcommand{\eea}{\end{eqnarray}}

\usepackage[usenames,dvipsnames,svgnames,table]{xcolor}	
\usepackage[normalem]{ulem}				
\usepackage{microtype}
\usepackage[colorlinks, citecolor=blue, linkcolor=blue, urlcolor=Maroon, filecolor=Maroon, linktocpage=true]{hyperref} 

\usepackage{chngcntr}
\counterwithout{equation}{section}

\begin{document}

\begin{center}
\vspace*{-1.0cm}
\begin{flushright}
\end{flushright}


\vspace{2.0cm} {\Large \bf Geometry of the moduli space of Hermitian-Einstein connections on  manifolds with a dilaton } \\[.2cm]

\vskip 2cm
 Georgios  Papadopoulos
\\
\vskip .6cm


\begin{small}
\textit{Department of Mathematics
\\
King's College London
\\
Strand
\\
 London WC2R 2LS, UK}\\
\texttt{george.papadopoulos@kcl.ac.uk}
\end{small}
\\*[.6cm]

\end{center}

\vskip 2.5 cm

\begin{abstract}
\noindent

We demonstrate that the moduli space of Hermitian-Einstein connections $\mathscr{M}^*_\HE(M^{2n})$ of vector bundles over compact non-Gauduchon Hermitian manifolds  $(M^{2n}, g, \omega)$ that  exhibit a dilaton field $\Phi$ admit a strong K\"ahler with torsion structure provided a certain condition is imposed on  their Lee form $\theta$ and the dilaton. We find that the geometries that satisfy this condition include those that  solve the string field equations or equivalently the  (generalised) gradient flow soliton type of equations.  In addition, we demonstrate that if the underlying manifold $(M^{2n}, g, \omega)$ admits a holomorphic and Killing vector field $X$ that leaves $\Phi$ also invariant, then the moduli spaces $\mathscr{M}^*_\HE(M^{2n})$  admits an induced holomorphic and Killing vector field $\alpha_X$. Furthermore, if $X$ is covariantly constant with respect to the compatible connection $\h\nabla$ with torsion a 3-form  on $(M^{2n}, g, \omega)$, then $\alpha_X$ is also covariantly constant with respect to the compatible connection $\h{\mathcal {D}}$  with torsion a 3-form on $\mathscr{M}^*_\HE(M^{2n})$ provided that $K^\flat\wedge X^\flat$ is a $(1,1)$-form with $K^\flat=\theta+2d\Phi$ and $\Phi$ is invariant under both $X$ and $IX$, where  $I$ is the complex structure of $M^{2n}$.

\end{abstract}



\newpage

\renewcommand{\thefootnote}{\arabic{footnote}}



\numberwithin{equation}{section}

\section{Introduction}\label{intro}

\underline{{\it Prologue}:} Hermitian-Einstein connections \cite{kobayashi} of vector bundles $E$ over a manifold\footnote{The manifold $M^{2n}$ has metric $g$ and Hermitian form $\omega$, $\omega_{ij}=g_{ik} I^k{}_j$, where $I$ is the complex structure.} $(M^{2n}, g, \omega)$ and their moduli spaces $\mathscr{M}_\HE(M^{2n})$  have been extensively studied, first as  part of the theory of instantons on 4-dimensional manifolds, $n=2$, and later in higher than four dimensions $n>2$ \cite{Donaldson, Uhlenbeck, LiYau, Li, lubke}. Amongst their applications are those in physics, for example  they are used in  heterotic string supersymmetric compactifications \cite{Strominger, Hullheterotic}, see also \cite{mcorist1, mcorist2, mcorist3},  and more recently in AdS/CFT duality of Maldacena \cite{maldacena}.  In the latter case, Witten \cite{witten} used the geometry of the moduli space of instantons on the hyper-K\"ahler with torsion (HKT) manifold $S^3\times S^1$   to provide evidence for an   AdS$_3$/CFT$_2$ duality -- this  states that IIB strings on AdS$_3\times S^3\times S^3\times S^1$ are dual to the 2-dimensional sigma model with target space the smooth part of the moduli space $\mathscr{M}^*_\asd(S^3\times S^1)$  of instantons on $S^3\times S^1$ \cite{maldacena, boonstra, elitzur, deboer, gukov, tong, eberhardt, eberhardt2, eberhardt3}.    It is known that if $M^{2n}$ is a K\"ahler manifold, then the geometry of the (smooth part of the) moduli spaces $\mathscr{M}_\HE^*(M^{2n})$ is also K\"ahler manifold, see \cite{itoh} for a review. More recently, L\"ubke and Teleman \cite{lubke} have  demonstrated that for  $M^{2n}$ a Hermitian manifold, $\mathscr{M}_\HE^*(M^{2n})$  admits a strong K\"ahler with torsion (KT) structure\footnote{KT manifolds are Hermitian manifolds equipped with the unique  connection $\h\nabla$  with skew-symmetric torsion $H$ such that  $\h\nabla g=\h\nabla \omega=0$, i.e. $\h\nabla$ is {\sl compatible} with the Hermitian structure. Then, the torsion is given by $H=-\ii_I d\omega$.  The KT structure is strong, iff $H$ is closed, $dH=0$, see \cite{pw} for a recent review.}.

Hermitian-Einstein connections on four-dimensional manifolds are special because such connections satisfy  the instanton anti-self-duality condition.  The anti-self-duality condition depends on the choice of orientation on the underlying 4-dimensional manifold $M^4$ and not  on the choice of complex structure on $M^4$. This is provided  that the choice of complex structure is compatible with the  orientation used to impose the anti-self-duality condition. As a result the moduli spaces of instantons $\mathscr{M}_\asd^*(M^{4})$ can exhibit  ``exotic'' geometric structures after making judicious choices of a complex structure on $M^4$. In particular, if $M^4$ admits a hyper-K\"ahler, (oriented) bi-KT, hyper-K\"ahler with torsion (HKT) or (oriented) bi-HKT structure\footnote{Bi-KT and bi-HKT structures are also known as generalised K\"ahler and generalised hyper-K\"ahler structures \cite{hitchin, gualtieri}, respectively.}, then  $\mathscr{M}_\asd^*(M^{4})$ exhibits the same structure as well \cite{lubke, hitchin, moraru}-- in all cases the torsion 3-form\footnote{All Hermitian manifolds $(M^{2n}, g, \omega)$ admit a unique compatible connection $\h \nabla$ with skew-symmetric torsion $H$, where $H=-\ii_I d\omega$,  $\h\nabla_iY^j=D_i Y^j+\frac{1}{2} H^j{}_{ik} Y^k$ and $D$ is the Levi-Civita connection.}  $\mathcal{H}$ of the moduli space either vanishes or it is a closed 3-form, $d\mathcal{H}=0$, i.e. the KT, bi-KT, HKT and bi-HKT structures are strong.

Another aspect of the geometry of $\mathscr{M}_\HE^*(M^{2n})$ is the symmetries induced on it from the  symmetries of the underlying manifold $M^{2n}$.  These have  extensively been investigated in \cite{gp} following the results in \cite{witten} on the symmetries of $\mathscr{M}_\asd^*(S^3\times S^1)$ induced from the Lie group structure on $S^3\times S^1$.  The latter have been instrumental in providing evidence for the  AdS$_3$/CFT$_2$ mentioned above. In particular, it was demonstrated \cite{witten}, see also \cite{gp},  that the sigma models with target space $\mathscr{M}_\asd^*(S^3\times S^1)$ exhibit as a symmetry two copies of the large $N=4$ superconformal algebra of \cite{sevrin, schoutens}.

 The investigation  \cite{lubke, hitchin, moraru, witten,  gp} of the geometry and symmetries of $\mathscr{M}_\HE^*(M^{2n})$ for $(M^{2n}, g, \omega)$ a Hermitian manifold  has been made in the Gauduchon gauge, i.e. in the gauge that the Lee form\footnote{The Lee form is defined as $\theta_i=D^k\omega_{kj} I^j{}_i$, where $D$ is the Levi-Civita connection. The metric $g$ is Gauduchon, iff $D^i\theta_i=0$. For $M^{2n}$ compact, given a Hermitian metric $g$, there is another one $e^\Lambda g$, where $\Lambda$ is a function on $M^{2n}$, such that $e^\Lambda g$ satisfies the Gauduchon condition.} $\theta$ of the Hermitian geometry of $M^{2n}$ is divergence free -- if the underlying manifold is K\"ahler or hyper-K\"ahler the Lee form vanishes, $\theta=0$, and so the associated metrics are automatically in the Gauduchon gauge.

The choice of the Gauduchon gauge to investigate the geometry of $\mathscr{M}_\HE^*(M^{2n})$ is not much of a restriction on the geometry of the underlying compact manifold $M^{2n}$  as every Hermitian metric in its conformal class admits a Gauduchon metric.  Moreover, the Hermitian-Einstein condition on a connection $A$ of a vector bundle $E$, especially with gauge group $SU(r)$, depends on the conformal class $[g]$ of the metric $g$ instead of $g$ itself. The same applies for the conditions that the tangent vectors of the moduli space $\mathscr{M}^*_\HE$ obey, see section \ref{sec:pre}.
 So provided  that the geometry of $M^{2n}$ is not subject to some other constraint, one can always choose the Gauduchon metric to perform the investigation of the moduli space geometry. However, if another condition is imposed on the geometry of $M^{2n}$, this will not be the case whenever the additional condition is not preserved upon the rescaling $g\rightarrow e^\Lambda g$ of the metric.  Such a rescaling  is required to bring a Hermitian metric into the Gauduchon gauge. For example, if $(M^{2n}, g, \omega)$ admits a strong KT structure, as $H=-\ii_I d\omega$,  the closure $dH=0$ of the 3-form torsion $H$ is not preserved upon  an arbitrary rescaling of the metric and so of $\omega\rightarrow e^\Lambda \omega$.  The same applies whenever the holonomy of the connection with torsion $\h\nabla$ is restricted to lie in a proper subgroup of $U(n)$ or $(g, H)$ satisfy an additional condition that depends on $g$ and not its conformal class $[g]$.  These kind of additional restrictions on the Hermitian geometry of $M^{2n}$ are common place in the context of various applications in physics and in geometry.  Furthermore, even though the conditions that the tangent vectors of $\mathscr{M}_\HE^*(M^{2n})$ satisfy depend only on the conformal class $[g]$ of $g$, the geometry of $\mathscr{M}_\HE^*(M^{2n})$, like the metric and Hermitian form, depends on $g$ itself instead of its conformal class. As a result different choices of $g$  lead to different geometries on $\mathscr{M}_\HE^*(M^{2n})$.

The above remarks on the use of the Gauduchon gauge to investigate the geometry of  $\mathscr{M}_\HE^*(M^{2n})$ raise several questions. One question is on whether there is a  description of the geometry of $\mathscr{M}_\HE^*(M^{2n})$  for a metric $g$ on $M^{2n}$ that it is not  Gauduchon.  Another related  question is to identify the class of Hermitian geometries on $M^{2n}$ that this alternative description  of the geometry of $\mathscr{M}_\HE^*(M^{2n})$ can take place -- one expects some kind of a restriction that replaces  the Gauduchon gauge.  This restriction has to  be wide enough so that it includes the geometries that occur in various applications in physics and in geometry. If such an investigation can be carried out, the next question will be under which conditions the symmetries of the underlying manifold $M^{2n}$ can be lifted to $\mathscr{M}_\HE^*(M^{2n})$.

\underline{{\it New results}:} The objective of this article is to provide an answer to the first two questions.  Then, we shall proceed to investigate the symmetries of $\mathscr{M}_\HE^*(M^{2n})$ induced from those of the underlying manifold $M^{2n}$. The answer to the first question requires the modification of the inner product\footnote{In the investigation of geometry of $\mathscr{M}^*(M^{2n})$ so far, $\Phi$ is taken to be constant.}  on the tangent space of (irreducible) connections $\mathscr{A}^*(M^{2n})$ by introducing a ``dilaton field''  described by a scalar function $\Phi$ on $M^{2n}$ as
\be
(a_1, a_2)_{\mathscr {A}^*}=\int_{M} d^{2n}x  \,\sqrt{g}\,\,e^{-2\Phi}\,\, \langle a_1, a_2\rangle_{\mathfrak{g}}~,
\label{Aform1}
\ee
where $a_1, a_2$ are tangent vectors on $\mathscr{A}^*(M^{2n})$ at the connection $A\in \mathscr{A}^*(M^{2n})$, i.e. $a_1, a_2\in\Omega^1(P\times_{\Adj} \mathfrak{g})$,  and $g$ is the metric on the Hermitian manifold  $(M^{2n}, g, \omega)$; see \cite{gp} for an explanation of the notation and also section \ref{sec:pre}.
Then, we demonstrate that the analysis for the geometry of $\mathscr{M}_\HE^*(M^{2n})$ can be carried out provided that
\be
D^i\big( e^{-2\Phi} K^\flat_i\big)=0~,~~~K^\flat\equiv \theta+2 d\Phi~,
\label{concon}
\ee
where $K^\flat$ is the 1-form associated to the vector field $K$, i.e. $K^\flat(Y)= g(K, Y)$. This condition is the Gauduchon condition for another metric in the conformal class $[g]$ of $g$. But for a prescribed metric $g$ on $M^{2n}$, i.e. a metric that satisfies a condition on $M^{2n}$ that depends on $g$ and not its conformal class, (\ref{concon}) is a restriction on the geometry of $M^{2n}$.  This condition is satisfied by a wide range of manifolds. These include those that occur in heterotic string supersymmetric compactifications \cite{Strominger, Hullheterotic} and those that serve as target spaces   for conformally invariant sigma models with metric and $B$-field couplings, i.e. those that satisfy the string field equations \cite{friedan, CFMP, Ts, Ts2, CT}.
In the related context of Hermitian flows, for a review see \cite{GFS} and references therein, we shall demonstrate that  compact (generalised) gradient flow solitons solve (\ref{concon}).

After introducing the dilaton field,  the main result regarding the geometry of $\mathscr{M}_\HE^*(M^{2n})$ is described in the following theorem.

\begin{theorem}\label{th:skt}
Let $(M^{2n}, g, \omega)$ be a Hermitian manifold with a dilaton field $\Phi$. If $D^i(e^{-2\Phi} K^\flat_i)=0$ with $K^\flat=2d\Phi+\theta$, then
the moduli space $\mathscr{M}^*_\HE(M^{2n})$ admits a strong KT structure.
\end{theorem}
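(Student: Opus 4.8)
The plan is to run the L\"ubke--Teleman moduli-space construction, but with the dilaton-weighted inner product (\ref{Aform1}) throughout, and to show that the single place where the Gauduchon gauge entered their argument is now exactly replaced by the hypothesis (\ref{concon}). First I would fix the description of the tangent space. At an irreducible Hermitian--Einstein connection $A$, a tangent vector of $\mathscr{A}^*$ is $a\in\Omega^1(M,\mathfrak{g})$, and $T_{[A]}\mathscr{M}^*_\HE$ is cut out by the linearised Hermitian--Einstein equations together with the gauge-fixing condition that $a$ be orthogonal to the gauge orbit $d_A\Omega^0(M,\mathfrak{g})$ in the metric (\ref{Aform1}). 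This orthogonality gives the weighted Coulomb condition $e^{2\Phi}d_A^*(e^{-2\Phi}a)=0$, i.e. $d_A^*a+2\,\ii_{d\Phi}a=0$. Reconciling this real gauge slice with the holomorphic (Dolbeault) description of the deformations introduces Lee-form and dilaton corrections, because $d_A^*$ does not split as $\partial_A^*+\bar\partial_A^*$ on a non-K\"ahler manifold; these corrections combine precisely into the $1$-form $K^\flat=\theta+2d\Phi$, which is the first hint that (\ref{concon}) is the relevant hypothesis.

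Next I would install the three ingredients of the KT structure. The complex structure $\mathcal{I}$ on $\mathscr{M}^*_\HE$ is induced by letting $I$ act on the form index of $a$; on the Dolbeault representatives this is multiplication by $\i$ on the $(0,1)$ part, so $\mathcal{I}^2=-1$, it preserves the harmonic tangent space, and it is integrable because the underlying deformation problem is holomorphic (Kuranishi theory endows $\mathscr{M}^*_\HE$ with genuine holomorphic coordinates). The metric $\mathcal{G}$ is the restriction of (\ref{Aform1}) to harmonic representatives; it is Hermitian, $\mathcal{G}(\mathcal{I}a_1,\mathcal{I}a_2)=\mathcal{G}(a_1,a_2)$, since $I$ is $g$-orthogonal and the weight $e^{-2\Phi}$ is a scalar. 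Writing $\Omega(a_1,a_2)=\mathcal{G}(\mathcal{I}a_1,a_2)$ for the fundamental $2$-form, the compatible connection $\h{\mathcal D}$ is the unique connection preserving $(\mathcal{G},\mathcal{I})$ with totally skew torsion $\mathcal{H}=-\ii_{\mathcal{I}}d\Omega$. The whole content of the theorem then reduces to the identity $d\mathcal{H}=0$.

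The main step is the computation of $\mathcal{H}$ and the verification that it is closed. Choosing a local frame of harmonic representatives $a_\mu$ for $T\mathscr{M}^*_\HE$, both $\Omega_{\mu\nu}$ and $\mathcal{H}_{\mu\nu\rho}$ are integrals over $M$ of pointwise pairings of the $a_\mu$, and $d\mathcal{H}$ is obtained by differentiating these along the moduli directions. The derivatives of the harmonic representatives are controlled by the Green's operator of the weighted Laplacian and by the curvature of $A$; after repeated use of the Bianchi identity and integration by parts on the compact manifold $M$, the potentially obstructing contributions assemble into integrals of total weighted divergences of the form $\int_M \sqrt{g}\,D^i\!\big(e^{-2\Phi}K^\flat_i\,(\cdots)\big)$. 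The factor $K^\flat=\theta+2d\Phi$ appears for a structural reason: the covariant divergence of the complex structure produces the Lee form, via an identity of the form $D^jI^k{}_j=(I\theta)^k$, while the weight $e^{-2\Phi}$ contributes the $2d\Phi$. Under the hypothesis $D^i(e^{-2\Phi}K^\flat_i)=0$ all such terms vanish, leaving $d\mathcal{H}=0$ and hence a strong KT structure.

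I expect the hardest part to be exactly this last vanishing: keeping track of every term generated by varying the harmonic representatives and by the non-closure of $\omega$, and showing that everything which is not manifestly zero organises into the single weighted divergence governed by (\ref{concon}). As a consistency check, when $\Phi$ is constant one has $K^\flat=\theta$ and (\ref{concon}) reduces to the Gauduchon condition $D^i\theta_i=0$, recovering the strong KT structure of L\"ubke--Teleman; in this sense the dilaton merely deforms the Gauduchon gauge into the weighted condition (\ref{concon}).
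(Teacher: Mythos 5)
Your high-level strategy (rerun L\"ubke--Teleman with the weighted inner product, identify $K^\flat=\theta+2d\Phi$, kill the obstruction to $d\mathcal{H}=0$ by the weighted divergence hypothesis, check the Gauduchon limit) matches the paper's, but there is a genuine gap at the very first step: your gauge slice is the wrong one. You impose weighted $L^2$-orthogonality to the gauge orbits, $D_A^i(e^{-2\Phi}a_i)=0$, i.e.\ $D_A^ia_i-2\partial^i\Phi\,a_i=0$. The paper's horizontality condition (\ref{hor}) is $D_A^ia_i+\theta^ia_i=0$ --- it contains no dilaton and is \emph{not} an orthogonality condition; it is forced by complex-structure compatibility, since the pair of conditions (\ref{tangentcon}) and (\ref{hor}) is mapped into itself under $a\mapsto-\ii_Ia$, which is exactly what allows $\mathcal{I}$ to act on horizontal lifts and yields the closed-form integral expression for $\Omega_{\mathscr{M}^*_\HE}$. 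Your slice coincides with (\ref{hor}) only when $\theta=-2d\Phi$ (the conformally balanced case, where the moduli space is actually K\"ahler); otherwise $-\ii_Ia^h$ is not horizontal, the fundamental form involves an uncontrolled horizontal projection, and the computation of $\mathcal{H}=-\ii_{\mathcal I}d\Omega$ and of $d\mathcal{H}$ that you sketch does not go through as stated. Your remark that the Dolbeault/Coulomb ``corrections combine precisely into $K^\flat$'' is the crux, and it is asserted rather than derived. A telltale symptom that something is off: with your orthogonal slice the relevant operator is $e^{2\Phi}D_A^i(e^{-2\Phi}D_{Ai}\,\cdot\,)$, which is self-adjoint in the weighted product, so the horizontal--vertical splitting would be automatic for irreducible $A$ with \emph{no} hypothesis at all --- the condition (\ref{phig}) would play no role where you say it first appears. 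In the paper the hypothesis is needed precisely because the correct slice's operator $\mathcal{O}=D_A^iD_{Ai}+\theta^iD_{Ai}$ is \emph{not} self-adjoint in the weighted product; Theorem \ref{th:two} uses (\ref{phig}) to show $\mathcal{O}$ has trivial kernel and trivial cokernel (the adjoint being $D_A^iD_{Ai}-(\theta^i+4\partial^i\Phi)D_{Ai}$), hence is invertible.

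Second, you never exhibit the mechanism that makes your ``everything assembles into weighted divergences'' true. The paper's proof runs through the curvature $\Theta$ of the horizontal distribution, defined by $\mathcal{O}\Theta(a_1^h,a_2^h)=2g^{ij}[a^h_{1i},a^h_{2j}]_{\mathfrak g}$ in (\ref{diftheta}): $d\Omega_{\mathscr{M}^*_\HE}$ is an integral of $d(e^{-2\Phi}\omega^{n-1})$ against $\langle\Theta,a^h\rangle_{\mathfrak g}$, the torsion (\ref{mtorsion}) is the pairing of $e^{-2\Phi}K^\flat$ with $\langle\Theta(a_1^h,a_2^h),a_3^h\rangle_{\mathfrak g}$, the KT property $d\Omega=\ii_{\mathcal I}\mathcal{H}$ uses that $\Theta$ is a $(1,1)$-form, and $d\mathcal{H}$ collapses, via the Bianchi identity for $\Theta$ and one integration by parts, to $-\int\sqrt{g}\,D^i\big(e^{-2\Phi}K^\flat_i\big)\langle\Theta,\Theta\rangle_{\mathfrak g}$ plus cyclic terms, cf.\ (\ref{dhm}), which vanishes by hypothesis. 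Note also that your count ``the single place where the Gauduchon gauge entered'' is off: the hypothesis is used twice, once for the invertibility of $\mathcal{O}$ and once for $d\mathcal{H}=0$. The repair is straightforward in outline: keep your weighted metric and Hermitian form (\ref{Aform2}), but adopt the $\Phi$-independent horizontality condition (\ref{hor}), establish invertibility of $\mathcal{O}$ as in Theorem \ref{th:two}, and then run the $\Theta$-based computation above rather than the harmonic-representative bookkeeping you propose.
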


Similarly, we prove two theorems that describe the symmetries of $\mathscr{M}_\HE^*(M^{2n})$ induced from those of the underlying manifold $M^{2n}$. The first concerns the properties of the vector fields $\alpha_X$ induced on $\mathscr{M}^*_\HE(M^{2n})$ from holomorphic and Killing vector fields $X$ on $M^{2n}$.

\begin{theorem}\label{th:holkill}
Let $(M^{2n}, g, \omega)$ be a compact KT manifold with dilaton $\Phi$. Suppose that the condition (\ref{phig}), $D^i\big(e^{-2\Phi} K^\flat_i\big)=0$ with $K^\flat=\theta+2d\Phi$,  is satisfied and moreover $M^{2n}$ admits a holomorphic and Killing vector field $X$ such that $X\Phi=0$, then $\alpha_X$ is holomorphic and Killing vector field on $\mathscr{M}^*_\HE(M^{2n})$.
\end{theorem}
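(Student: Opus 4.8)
The plan is to exhibit $\alpha_X$ as the infinitesimal generator on $\mathscr{M}^*_\HE(M^{2n})$ of the one-parameter family of pull-backs $A\mapsto\varphi_t^*A$, where $\varphi_t$ is the flow of $X$ on $M^{2n}$ (lifted to the bundle), and then to read off the Killing and holomorphic properties of $\alpha_X$ from the corresponding properties of $X$ downstairs. First I would recall from section \ref{sec:pre} and \cite{witten,gp} that the tangent vector to $\mathscr{M}^*_\HE$ determined by $X$ is the horizontal part of $\mathcal{L}_X A$. Using the identity $\mathcal{L}_X A=\iota_X F_A+D_A(\iota_X A)$, with $F_A$ the curvature and $D_A$ the gauge-covariant exterior derivative, the term $D_A(\iota_X A)$ is an infinitesimal gauge transformation and so is annihilated by the projection $P$ onto the orthogonal complement of the gauge orbits in the inner product (\ref{Aform1}); hence $\alpha_X=P(\iota_X F_A)$. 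Since $\varphi_t$ preserves $I$ and $[g]$ it maps Hermitian-Einstein connections to Hermitian-Einstein connections, so $\mathcal{L}_X A$—and therefore $\iota_X F_A$ and its projection $\alpha_X$—solves the linearised Hermitian-Einstein equation and is genuinely tangent to $\mathscr{M}^*_\HE$.

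The core of the argument is that $A\mapsto\varphi_t^*A$ is at once an isometry of (\ref{Aform1}) and holomorphic for the complex structure induced by $I$ on $\Omega^1(P\times_{\Adj}\mathfrak g)$. Isometry is a change-of-variables computation: because $X$ is Killing the volume form $\sqrt g\,d^{2n}x$ and the pointwise pairing $\langle\,\cdot\,,\cdot\,\rangle_{\mathfrak g}$ are $\varphi_t$-invariant, and because $X\Phi=0$ the weight $e^{-2\Phi}$ is $\varphi_t$-invariant, so the entire integrand of (\ref{Aform1}) is preserved. Holomorphicity holds because $X$ holomorphic means $\varphi_t^*$ commutes with the action of $I$ on forms. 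As $\varphi_t^*$ also sends gauge orbits to gauge orbits, $\varphi_t^*(D_A\epsilon)=D_{\varphi_t^*A}(\varphi_t^*\epsilon)$, and preserves the Hermitian-Einstein locus, it descends to a one-parameter group $\tilde\varphi_t$ of transformations of $\mathscr{M}^*_\HE(M^{2n})$ whose generator is $\alpha_X$. Preserving both (\ref{Aform1}) and the gauge-orbit distribution, $\varphi_t^*$ preserves their orthogonal complement and restricts to an isometry of the horizontal spaces; passing to the quotient, $\tilde\varphi_t$ is an isometry of the moduli metric $\mathcal{G}$ supplied by Theorem \ref{th:skt}, so $\alpha_X$ is Killing. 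Likewise, since $\varphi_t^*$ commutes with $I$ and $I$ descends by Theorem \ref{th:skt}, the map $\tilde\varphi_t$ is biholomorphic for the induced complex structure $\mathcal{I}$ and $\alpha_X$ is holomorphic.

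The step I expect to be the real obstacle is the interplay between the horizontal projection $P$ and the complex structure: for the ``holomorphic downstairs'' conclusion one needs $P$ to commute with $I$ so that $I$ descends to $\mathscr{M}^*_\HE$ in the first place, and this is exactly where the hypothesis (\ref{concon}), $D^i(e^{-2\Phi}K^\flat_i)=0$ with $K^\flat=\theta+2d\Phi$, is indispensable—it is the self-adjointness/integrability condition underlying the strong KT structure of Theorem \ref{th:skt} and controls the divergence terms produced when one integrates by parts against the weight $e^{-2\Phi}$. If the clean flow argument must be made fully rigorous at the level of the projected representative, I would instead write $\alpha_X=\iota_X F_A-D_A\phi_X$ with $\phi_X$ the solution of the weighted gauge-fixing equation, and verify the Killing equation for $\mathcal{G}$ and the vanishing of $\mathcal{L}_{\alpha_X}\mathcal{I}$ by explicit integration by parts on $M^{2n}$; there every boundary and divergence term is removed precisely by (\ref{concon}) together with $X$ being a holomorphic isometry fixing $\Phi$.
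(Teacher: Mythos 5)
Your equivariant-flow strategy is genuinely different from the paper's proof, which never exponentiates $X$: the paper verifies $\mathcal{L}_{\alpha_X}(\cdot,\cdot)_{\mathscr{M}^*_\HE}=0$ and $\mathcal{L}_{\alpha_X}\Omega_{\mathscr{M}^*_\HE}=0$ directly at the infinitesimal level, using the horizontal lift $a^h_X=\iota_XF-d_A\eta$ with $\mathcal{O}\eta=-F_{ij}\h\nabla^iX^j$ (Lemma \ref{le:hor}) and, crucially, the key Lemma \ref{le:main}, $\Theta(a^h,a^h_X)=\iota_Xa^h+a^h\cdot\eta$, which is what controls the vertical parts $\lsq a^h_X,a^h\rsq^v=-d_A\Theta(a^h_X,a^h)$ of the commutators appearing in the Lie-derivative formulas; the weighted analogues of (\ref{kproof}) and (\ref{holx}) then collapse to $\int e^{-2\Phi}\,\mathcal{L}_X\langle a^h_1,a^h_2\rangle_{\mathfrak{g}}=0$ by $X$ Killing, holomorphic and $X\Phi=0$. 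Your fallback plan (solve the gauge-fixing equation for $\phi_X$ and integrate by parts, with (\ref{phig}) removing divergence terms) is essentially this route, though you do not identify the $\Theta$-computation of Lemma \ref{le:main} as the technical heart of it. A global flow argument is in principle viable and conceptually cleaner, and the lift of $\varphi_t$ to $P$ is indeed only an issue modulo gauge, which is harmless on the quotient.

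However, as written your flow argument has a genuine misstep at its hinge. You identify the horizontal distribution with the orthogonal complement of the gauge orbits in the inner product (\ref{Aform1}), and deduce that $\varphi_t^*$, being an isometry preserving the orbits, preserves the horizontal spaces. Integrating by parts against the weight $e^{-2\Phi}$ shows that orthogonality to the orbit directions $d_A\epsilon$ reads $D_A^ia_i-2\partial^i\Phi\,a_i=0$, whereas the horizontality condition actually used, (\ref{hor}), is $D_A^ia_i+\theta^ia_i=0$; these coincide only when $\theta=-2d\Phi$, the conformally balanced case in which $K=0$ and $\mathscr{M}^*_\HE$ is K\"ahler anyway (even for constant $\Phi$ in the Gauduchon setting the orthogonality condition $D_A^ia_i=0$ differs from (\ref{hor}) unless $\theta=0$). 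The splitting $a=a^h+a^v$ is an oblique projection, chosen so that the horizontal space is $\mathcal{I}$-invariant, not metric-orthogonal, so the Riemannian-submersion heuristic does not apply as stated. The repair is straightforward: since $\varphi_t$ preserves $g$ and $I$ it preserves $\theta$, hence preserves condition (\ref{hor}) by naturality, so the horizontal distribution is flow-invariant, and (\ref{phig}) — whose role is to make $\mathcal{O}$ invertible so that the horizontal–vertical decomposition exists and is unique (Theorem \ref{th:two}), not to make the projection commute with $\mathcal{I}$, which holds by construction of (\ref{hor}) — guarantees the descent to $\mathscr{M}^*_\HE$ is well defined. With that correction your argument closes; without it, the step ``preserves the orthogonal complement, hence restricts to an isometry of the horizontal spaces'' is false in precisely the non-Gauduchon regime the theorem addresses.
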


The second theorem concerns the properties of the vector fields  $\alpha_X$ induced by holomorphic and $\h\nabla$-covariantly constant vector fields $X$ on $M^{2n}$. Under the same assumptions on $(M^{2n}, g, \omega)$ as in the previous theorem, we have the following.

\begin{theorem}\label{th:phihnabla}
Let $X$ be $\h\nabla$-parallel and holomorphic vector field on $(M^{2n}, g, \omega)$, and $X\Phi=Y\Phi=0$ with $Y=-IX$, where $I$ is the complex structure of $M^{2n}$. Then,  $\alpha_X$ is $\h{\mathcal D}$-covariantly constant on $\mathscr{M}^*_{\HE}(M^{2n})$  provided that $X^\flat\wedge K^\flat$ is a $(1,1)$-form on $M^{2n}$, where $K^\flat=\theta+2 d\Phi$ and $\h{\mathcal D}$ is the compatible connection on $\mathscr{M}^*_{\HE}(M^{2n})$ with skew-symmetric torsion.
\end{theorem}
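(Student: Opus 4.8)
The plan is to reduce the statement to a single closed-form identity on the moduli space and then to transport the corresponding identity from $M^{2n}$ through the integral formulae that define the moduli-space geometry. First I would record that, on a KT manifold, $\h\nabla I=0$, so $\h\nabla X=0$ forces $\h\nabla(IX)=0$ as well, and that the parallel condition $\h\nabla_i X_j=D_iX_j+\tfrac12 H_{jik}X^k=0$ is equivalent to the single tensor equation $D_iX_j=\tfrac12 H_{ijk}X^k$. Since the right-hand side is antisymmetric in $i,j$, this already encodes both $D_{(i}X_{j)}=0$ (so $X$ is Killing) and the ``seed'' identity $dX^\flat=\iota_X H$ on $M^{2n}$. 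In particular the hypotheses of Theorem~\ref{th:holkill} hold, so $\alpha_X$ is holomorphic and Killing on $\mathscr{M}^*_{\HE}(M^{2n})$.

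Next I would split $\h{\mathcal D}=\nabla^{\mathcal G}+\tfrac12{\mathcal H}$ into its Levi-Civita and torsion parts, where $\mathcal G$ is the moduli-space metric induced by (\ref{Aform1}). The symmetric part of $\h{\mathcal D}\alpha_X$ is $\nabla^{\mathcal G}_{(A}(\alpha_X)_{B)}$, which vanishes because $\alpha_X$ is Killing, while the antisymmetric part equals $\tfrac12\big(d\alpha_X^\flat-\iota_{\alpha_X}{\mathcal H}\big)$. Hence the theorem is equivalent to the single statement
\be
d\alpha_X^\flat=\iota_{\alpha_X}{\mathcal H}
\ee
on $\mathscr{M}^*_{\HE}(M^{2n})$, which is the exact lift of the seed identity $dX^\flat=\iota_X H$.

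To verify this I would use the explicit description of tangent vectors and of the moduli data. Writing the induced vector field as $\alpha_X=\iota_X F_A-d_A\phi_X$, with $\phi_X$ the compensator fixed by the $e^{-2\Phi}$-weighted gauge condition, both $\alpha_X^\flat(\,\cdot\,)={\mathcal G}(\alpha_X,\cdot)$ and $\iota_{\alpha_X}{\mathcal H}$ become integrals over $M^{2n}$ of bilinears in the tangent vectors, carrying the weight $e^{-2\Phi}\sqrt{g}$ and the explicit expression for ${\mathcal H}$ obtained in the proof of Theorem~\ref{th:skt}. Differentiating $\alpha_X$ along two tangent directions and integrating by parts, the seed identity $dX^\flat=\iota_X H$ produces the bulk matching of the two forms, while every stray term is of one of three types: a total $e^{-2\Phi}$-weighted divergence, a term in which $X$ or $IX$ differentiates the weight, and a term sensitive to the $(2,0)\oplus(0,2)$ part of $X^\flat\wedge K^\flat$. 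These are annihilated precisely by (\ref{concon}), by $X\Phi=Y\Phi=0$ with $Y=-IX$, and by the hypothesis that $X^\flat\wedge K^\flat$ is a $(1,1)$-form, respectively.

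The main obstacle I anticipate is the last step: controlling the contribution of the gauge-fixing projection (the weighted Green's operator for the covariant Laplacian) and its commutators with $\iota_X$ and with the induced complex structure $\mathcal I$. Holomorphicity of $X$ lets one move $I$ through these operators, but the leftover commutator terms must be reorganised, after integration by parts, into exactly the divergence $D^i(e^{-2\Phi}K^\flat_i)$ and into the type-$(2,0)\oplus(0,2)$ component of $X^\flat\wedge K^\flat$. Checking that no further obstruction survives, and that the holomorphic and $\h\nabla$-parallel conditions together close the computation, is the delicate bookkeeping step on which the whole argument rests.
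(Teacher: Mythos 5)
Your proposal is correct and follows essentially the same route as the paper: invoke Theorem~\ref{th:holkill} to get $\alpha_X$ holomorphic and Killing, reduce $\h{\mathcal D}\alpha_X=0$ to the single identity $d\alpha_X^\flat=\ii_{\alpha_X}\mathcal{H}$ (the paper leaves this standard reduction implicit; you spell it out correctly), and then verify that identity by computing both sides as $e^{-2\Phi}$-weighted integrals over $M^{2n}$, with the $(1,1)$-hypothesis on $X^\flat\wedge K^\flat$ absorbing the final mismatch between the $(X\wedge K)$-type and $(IX\wedge IK)$-type bulk terms, exactly as in the comparison of (\ref{HF}) with (\ref{dF}).

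One point worth correcting, because it concerns what you identify as the crux of the argument: since $\h\nabla X=0$, the source in (\ref{oeta}) vanishes, $\mathcal{O}\eta=-F_{ij}\h\nabla^iX^j=0$, and invertibility of $\mathcal{O}$ forces $\eta=0$, so the horizontal lift is exactly $a_X^h=\ii_XF$ with no compensator at all. Consequently the obstacle you single out as the delicate step --- controlling the weighted Green's operator of the gauge-fixing projection and its commutators with $\ii_X$ and $\mathcal{I}$ --- simply does not arise. In the paper's computation the only gauge-theoretic residue is the curvature $\Theta$, and the terms $\langle d_A\Theta(a_2^h,a_3^h),\,\ii_XF\rangle_{\mathfrak g}$ appear identically in $\mathcal{H}(\alpha_X,\alpha_2,\alpha_3)$ and in $d\mathcal{F}_X$, so they cancel on direct comparison with no reorganisation; the remaining analytic input is the pointwise identity (\ref{anid}), whose surface term is killed by $Y$ Killing together with $Y\Phi=0$. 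A minor related slip: the horizontality condition (\ref{hor}) is \emph{not} $e^{-2\Phi}$-weighted --- the dilaton enters only through the inner product (\ref{Aform2}) and through the invertibility of $\mathcal{O}$ via (\ref{phig}) --- though with $\eta=0$ this does not affect your argument.
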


The proof of both these theorems is facilitated by the key Lemma \ref{le:main}.


  To illustrate what kind of geometries on manifolds solve  the condition (\ref{concon}), we shall consider the Riemannian manifold $(M^d, g, H)$ with metric $g$ and a 3-form $H$ that satisfy the condition
  \be
\h R_{ij}-\frac{1}{4} P_{ij}=2\h\nabla_i X^\flat_j~,
\label{sinv}
\ee
where $\h\nabla$ is the metric connection with torsion $H$, $\h R$ is te Ricci tensor of $\h\nabla$,  $X$ is a vector field and $P_{ij}=P_{ji}$ will be specified later.  Such conditions restrict both $g$ and  $H$ and depend on $g$ instead of the conformal class $[g]$ of $g$.

There are several options to choosing $P$ and $dH$. First, for $P=0$ and $H$ closed, $d H=0$, this is the 1-loop scale invariance condition for bosonic sigma models with couplings the metric and the $B$-field, $H=dB$, or equivalently it is the condition for  steady flow (generalised) solitons \cite{OSW2, OSW, Tseytlin2, Huhu}.

Second, another choice for $P$ and $dH$ is that that occurs in heterotic geometries. For these geometries, $P$ and $dH$ are given\footnote{In perturbation theory both $P$ and $dH$ are corrected order by order in $\alpha'$.  Here, only the first order  correction to the expansion in $\alpha'$  is given.} as
\be
P_{ij}=-\frac{\alpha'}{4} \big(\tilde R_{ikmn} \tilde R_j{}^{kmn}- F(C)_{ik \ma\mb} F(C)_{j}{}^{k\ma\mb}\big)~,
\label{pha}
\ee
and
\be
dH=\frac{\alpha'}{4}\big(\tilde R^i{}_j\wedge \tilde R^j{}_i- F(C)^\ma{}_\mb\wedge F(C)^\mb{}_\ma\big)~,
\label{dha}
\ee
respectively, where $\alpha'$ is a parameter,  $\tilde R$ is a curvature of the tangent bundle of $M^d$ and $F$ is the curvature of a vector bundle $E$ over $M^d$.  Then (\ref{sinv}) is the condition for scale invariance of heterotic sigma model\footnote{For the scale invariance of heterotic sigma models, the condition (\ref{sinv}) has to be supplemented with an additional condition associated to the connection $C$ coupling of the gauge sector.} \cite{GHMR, HullWitten} up to and including 2-loops in perturbation theory.
There are other choices for $P$ and $H$ in (\ref{sinv}), apart from those two mentioned above,  and there are several geometries that satisfy this condition that we shall explore later\footnote{Explicit examples include Hermitian homogeneous spaces whose canonical connection has holonomy included in $SU(n)$ and its torsion is a 3-form \cite{OP}. For these $P$ and  $dH$ do not vanish and satisfy (\ref{sinv}) but they do not satisfy the condition for the heterotic sigma model scale invariance as they do not exhibit a free parameter $\alpha'$ and  $P$ and $dH$ are not corrected to all orders in perturbation theory as it is expected. Furthermore, it is not expected that $dH$ can be written as in (\ref{dha}) -- such a rewriting requires a case by case investigation.  }.

A related condition to (\ref{sinv}) that the geometries that we shall consider satisfy is
\be
\h R_{ij}-\frac{1}{4} P_{ij}+2\h\nabla_i\partial_j\Phi=0~,
\label{cinv}
\ee
 where the scalar function $\Phi$ is the dilaton.  As in the previous case, there are several cases to consider. First,  for $P=0$ and $H$ closed, $d H=0$, this is the condition required on the couplings $g$ and  $B$-field, $H=dB$, for a bosonic sigma model to be conformally invariant at 1-loop or equivalently it is the condition satisfied by gradient flow solitons, see e.g. \cite{GFS}.  Again, if $P$  is given as in (\ref{pha}) with $dH$ is expressed as in (\ref{dha}), then (\ref{cinv}) is the condition for conformal invariance of heterotic sigma model\footnote{For the conformal invariance of heterotic sigma models, the condition (\ref{cinv}) has to be supplemented with an addition condition associated to the connection coupling of the gauge sector.} up to and including 2-loops in perturbation theory.

The relationship between  (\ref{sinv}) and  (\ref{cinv}) has been explored from different perspectives. From the perspective of scale vs conformal invariance, Polchinski \cite{Polchinski} demonstrated that under certain conditions,  which include unitarity and a discrete spectrum of operator scaling  dimensions, that two-dimensional scale invariant Quantum Field Theories are actually conformally invariant. In the context of two-dimensional sigma models $P=dH=0$, the relations between (\ref{sinv}) and (\ref{cinv}) has been investigated by Hull and Townsend in \cite{townsend}. In the same context, it has been demonstrated in \cite{pw} via a Perelman style of argument \cite{Perelman} that if the sigma model target space is compact, then   (\ref{sinv}) implies  (\ref{cinv}). Equivalently, steady flow (generalised) solitons are in fact gradient flow (generalised) solitons \cite{GFS}. A similar  result can be established for heterotic sigma models with compact target space \cite{gp2}, i.e. for sigma models with $P$ and $dH$  given in (\ref{pha}) and (\ref{dha}), respectively.

Suppose that (\ref{sinv}) implies (\ref{cinv}) either as a consequence  of one of the results mentioned above or as an additional assumption on the geometry of $(M,g,H)$.  Then  consistency between (\ref{sinv}) and (\ref{cinv}) requires that
\be
\h\nabla_i V^j=0~,~~~ V^\flat_i\equiv X^\flat_j+2\partial_j \Phi~,
\label{concon2}
\ee
i.e. the vector field $V$ is $\h\nabla$-covariantly constant. In turn this implies that $V$ is Killing vector field and $\ii_V H=dV^\flat$.

To see the relevance of the geometries mentioned above in the context  of the moduli space of  Hermitian-Einstein connections $\mathscr{M}^*_\HE$, it is remarkable to observe that $V$ appears in the key condition  (\ref{concon}) required for the investigation of the geometry of moduli spaces.  This is provided that one identifies $X^\flat=\theta$, i.e. $V=K$.  Indeed, we shall demonstrate in section \ref{sec:ex} that for compact Calabi-Yau manifolds with torsion (CYT), i.e. KT manifolds that the holonomy of $\h\nabla$ is included in $SU(n)$, $X^\flat$ can be identified with $\theta$, $X^\flat=\theta$,  and the condition (\ref{concon2}) implies (\ref{concon}).

Because of the key role that the vector field $K$ has both in exploring the properties of $\mathscr{M}_\HE^*(M^{2n})$ and those of $M^{2n}$, we provide a systematic study of its properties for various geometries on $M^{2n}$ and we demonstrate that it can be both Killing and holomorphic. The main result in the context of KT geometry is described in Proposition \ref{prop:hol} -- this is an extension of the proposition 4.1 of Streets and Ustinovskiy in \cite{jsyu}.  The main applications of this result on the geometry of moduli spaces $\mathscr{M}_\HE^*(M^{2n})$ are given in Theorem \ref{th:modkill}.

\underline{{\it Organisation}:} The paper is organised as follows.  In section 2, after an introduction that summarises  the geometry of $\mathscr{M}_\HE^*(M^{2n})$ provided that the metric on $M^{2n}$ is in the Gauduchon gauge, we introduce the modifications in the metric and Hermitian form on
$\mathscr{M}_\HE^*(M^{2n})$ in the presence of a dilaton. Then, we proceed with the proof of the Theorem \ref{th:skt} and demonstrate  that $\mathscr{M}_\HE^*(M^{2n})$ admits a strong KT structure with respect to the new metric and new Hermitian form that now depend on the dilaton.  We also explain the necessity of the condition (\ref{concon}) for the proof of the result.

In section 3, we demonstrate that holomorphic and Killing vector fields on $M^{2n}$ induce holomorphic and Killing vector fields on $\mathscr{M}_\HE^*(M^{2n})$.
The proof of this statement in this section is described in the Gauduchon gauge.  The reason behind this is to simplify the intermediate  steps of the proof. It also makes it easier to compare these results with those in \cite{gp} that have been demonstrated under stronger assumptions.

In section 4, the results of section 3 are adapted to geometries on $M^{2n}$ that do not satisfy the Gauduchon gauge and  include a dilaton.  This section concludes with the proof of the Theorems \ref{th:holkill} and \ref{th:phihnabla} stated above.

In section 5, we explore the symmetries of KT manifolds $(M^{2n}, g,\omega)$ that satisfy (\ref{cinv}). Then, we use these  to explore the kind of geometries that satisfy the condition (\ref{concon}).  We also describe the consequences of these results on the symmetries of the moduli space of Hermitian-Einstein connections $\mathscr{M}_\HE^*(M^{2n})$.

\section{Geometry of moduli space of Hermitian-Einstein connections}

\subsection{Preliminaries}\label{sec:pre}

Before we proceed with the proof of our main results, we set up our notation and give some key definitions that will be used later.  For notation, we follow closely that of \cite{gp} where some more explanation can be found. We also summarise, without a proof, some key steps in the description of the geometry of the moduli space of Hermitian-Einstein connections. This section concludes with  the main result of L\"ubke and Teleman \cite{lubke} presented in Theorem \ref{th:lt} on the geometry of the moduli space of Hermitian-Einstein connections in the Gauduchon gauge.

\subsubsection{The space of connections}
Let $P(M^d, G)$ be a principal bundle with fibre group $G$ and base space $M^d$ and denote the space of all connections on $P(M^d, G)$ with $\mathscr{A}$.
These connections have a gauge group $G$ and the space of all gauge transformations, $\mathscr{G}$,  is the space of sections of the associated bundle $P\times_{\Adj} G$ with respect to the adjoint representation, i.e. $\mathscr{G}=\Omega^0(P\times_{\Adj} G)$. Locally the gauge transformations are maps from $M^d$ to $G$.  The Lie algebra of $\mathscr{G}$ is $\Omega^0(P\times_{\Adj} \mathfrak{g})$, where $\mathfrak{g}$ is the Lie algebra of $G$.

The space of connections $\mathscr{A}$ is an affine space and $\Omega^1(P\times_{\Adj} \mathfrak{g})$ acts on it with translations as
\begin{align}
\Omega^1(P\times_{\Adj} \mathfrak{g})\times \mathscr{A} &\rightarrow \mathscr{A}
\cr
(a, A)\qquad&\rightarrow A+a~,
\end{align}
where locally $\Omega^1(P\times_{\Adj} \mathfrak{g})$ is the space of Lie algebra valued $1$-forms on $M^d$.
 The tangent space of $\mathscr{A}$ at every point $A\in \mathscr{A}$ can be identified with $\Omega^1(P\times_{\Adj} \mathfrak{g})$. Moreover, the tangent directions along the orbit of $\mathscr{G}$ that passes through $A$ at $A$ in $\mathscr{A}$ are given by
\be
a=d_A \epsilon\equiv  d\epsilon+ [A,\epsilon]_{\mathfrak{g}}~,
\ee
for every $\epsilon \in \Omega^0(P\times_{\Adj} \mathfrak{g})$ with $[\cdot, \cdot]_{\mathfrak{g}}$ the commutator\footnote{In what follows, we shall encounter several brackets. As it has already been mentioned $[\cdot, \cdot]_{\mathfrak{g}}$ is the commutator of Lie algebra $\mathfrak{g}$.   We reserve $[\cdot, \cdot]$ as the commutator of two vector fields on $M^d$ and $\lsq\cdot, \cdot\rsq$ as the commutator of two vector fields on the space of connections and associated moduli spaces. The latter brackets will be defined below.} of the Lie algebra  $\mathfrak{g}$.

If $M^{2n}$, $d=2n$,  is a Hermitian manifold with metric $g$ and complex structure $I$, then $\mathscr{A}$ admits a complex structure $\mathcal{I}$ given by
\be
\mathcal{I}(a)=-\ii_I a~,
\ee
where $(\ii_I a)_i= I^j{}_i a_j$.
Moreover, $\mathscr{A}$ is a K\"ahler manifold with metric given by
\be
(a_1, a_2)_{\mathscr{A}}=\int_{M^{2n}} d^dx~ \sqrt{g} g^{ij} \langle a_{1i}, a_{2j}\rangle_{\mathfrak{g}}~.
\label{Aherm}
\ee
The K\"ahler form is
\be
\Omega_{\mathscr{A}}(a_1, a_2)\equiv ( a_1, \mathcal{I} a_2)_{\mathscr{A}} =\frac{1}{(n-1)!}\, \int_{M^{2n}} \omega^{n-1}\wedge  \langle a_1\wedge  a_2\rangle_{\mathfrak{g}}~.
\label{Aform}
\ee
The closure of $\Omega_{\mathcal{A}}$ can easily be seen as it is a constant form, i.e. it does not depend on the point $A\in \mathscr{A}$ that it is defined on or equivalently is invariant under the action of $\Omega^1(P\times_{\Adj} \mathfrak{g})$.

\subsubsection{The tangent space of Hermitian-Einstein connections}

Let $(M^{2n}, g, \omega)$ be a Hermitian manifold. The Hermitian-Einstein condition on a connection $A$ with gauge group $G$ of a vector bundle $E$  expressed in the adjoint representation  is
\be
F^{2,0}=F^{0,2}=0~,~~~~\omega\llcorner F\equiv\frac{1}{2} \omega^{ij} F_{ij}= \lambda~,
\label{HEcond1}
\ee
where $\lambda$ is a real constant, which commutes with all the other elements of the gauge group Lie algebra $\mathfrak{g}$.  $G$ should be thought as either $U(r)$, $SU(r)$ or $PU(r)=SU(r)/\bZ_r$ but  we shall carry out the calculations below  without specifying $G$ -- If the gauge group is either $SU(r)$ or $PU(r)$, then $\lambda=0$.  For $\lambda=0$, (\ref{HEcond1}) depends on the conformal class $[g]$ of the Hermitian metric $g$ instead of $g$ itself.

Let $\mathscr{A}^*_{\HE}$ be  the  space of Hermitian-Einstein connections of a complex vector bundle $E$ \footnote{We shall also use $\mathscr{A}(M^{2n})$ to denote the space of connections $\mathscr{A}$ whenever it is required to distinguish spaces for either clarity or economy in the description. A similar notations will be adapted for the moduli spaces $\mathscr{M}$ described below.} associated to the principal bundle $P(M^{2n}, G)$ on which the gauge group $\mathscr{G}^*$ acts freely.
For $G=SU(r)$, $\mathscr{G}^*= \Omega^0(P\times_\Adj SU(r))$ and for $G=U(r)$, $\mathscr{G}^*= \Omega^0(P\times_\Adj U(r))/U(1)$ as in the latter case the centre $U(1)$ of $\Omega^0(P\times_\Adj U(r))$ is in the isotropy group of every connection on $P$. Clearly,  $\mathscr{A}^*_{\HE}$ is a subspace of $\mathscr{A}$.

 It follows from (\ref{HEcond1}) that the tangent space of $\mathscr{A}^*_{\HE}$ at every point $A\in \mathscr{A}^*_{\HE}$ is spanned by the elements  $a\in\Omega^1(P\times_{\Adj} \mathfrak{g})$ that satisfy
\be
d_A a^{2,0}=d_A a^{0,2}=0~,~~~\omega \llcorner d_A a=0~,
\label{tangentcon}
\ee
i.e. those elements of $\Omega^1(P\times_{\Adj} \mathfrak{g})$ that their (covariant) exterior derivative $d_A a$ is a $(1,1)$-form on $M^{2n}$ and $\omega$-traceless.  Clearly, the conditions (\ref{tangentcon}) on the tangent vectors depend on the conformal class $[g]$ of $g$.
Again the vectors tangent to the  orbit of $\mathscr{G}$ through a point $A\in \mathscr{A}$ are
\be
a=d_A \epsilon~,
\ee
where $\epsilon\in \Omega^0(P\times_{\Adj} \mathfrak{g})$. As the gauge group acts freely on $\mathscr{A}^*_{\HE}$, the only solution to the equation $d_A \epsilon=0$ is $\epsilon=0$.

It turns out that $\mathscr{A}^*_{\HE}$ is not a Hermitian manifold with the induced Hermitian structure from $\mathscr{A}$ described above in the previous section.
The reason is that if the tangent vector $a$ satisfies the second condition in (\ref{tangentcon}), $\mathcal{I}(a)$ does not necessarily satisfy the same condition. However, this will be resolved below in the description of the tangent space of the moduli space of Hermitian-Einstein connections.

\subsubsection{The tangent space of the moduli space of Hermitian-Einstein connections}

The smooth part of the moduli space of Hermitian-Einstein connections is $\mathscr{M}^*_{\HE}= \mathscr{A}^*_{\HE}/\mathscr{G}^*$.  The element $[A]$ of $\mathscr{M}^*_{\HE}$ is the orbit of $\mathscr{G}^*$ through the connection $A$.  The tangent vectors $\alpha$ of $\mathscr{M}^*_{\HE}$ at $[A]$ are described  via their horizontal lifts to vectors $a^h$ on  $\mathscr{A}^*_{\HE}$, i.e. $a^h$ satisfy (\ref{tangentcon}), and in addition the horizontality condition, or equivalently the gauge fixing condition,
\be
\omega \llcorner d_A \ii_Ia=0 \Longrightarrow D_A^i a^h_i+\theta^i a^h_i=0~,
\label{hor}
\ee
where $\theta$, $\theta_i=D^j \omega_{jk} I^k{}_i$,  is the Lee form of $M^{2n}$ and $D$ the Levi-Civita connection of $g$.  This horizontality condition has the property that if the tangent vector $a$ satisfies the second condition in (\ref{tangentcon}), then $\mathcal{I}(a)=-\ii_Ia$ also satisfies the same condition. Notice that again (\ref{hor}) depends on the conformal class $[g]$ of $g$. As this is the case for the conditions on the tangent vectors of $\mathscr{A}^*_\HE$ (\ref{tangentcon}), one concludes that the conditions on the tangent vectors of $\mathscr{M}^*_\HE$ depend on the conformal class $[g]$ of the Hermitian metric $g$ instead of $g$ itself.

The decomposition of a tangent vector $a$ as
\be
a=a^h+ a^v=a^h+ d_A \epsilon~,
\label{split}
\ee
is unique with $\epsilon$ satisfying the differential equation
\be
\mathcal{O} \epsilon=-\omega\llcorner d_A^c d_A\epsilon =(D_A^iD_{Ai}+\theta^i  D_{Ai})\epsilon=-\omega\llcorner d_A^c a~.
\label{oop}
 \ee
This is because  $\mathcal{O}$ has trivial kernel and  is onto provided that
\be
D^i \theta_i=0~,
\ee
i.e. $g$ is a Gauduchon metric. Therefore, $\mathcal{O}$ is invertible.  The splitting of tangent vectors (\ref{split}) defines a connection on $\mathscr{A}^*_{\HE}$, where the horizontal component of $a$ is $a^h$ and the vertical component of $a$ is $a^v=d_A\epsilon$ -- this justifies the terminology that has been used for $a^h$.

The main result of \cite{lubke} on the geometry of $\mathscr{M}^*_{\HE}$ can be described as follows:
\begin{theorem}\label{th:lt}
 $\mathscr{M}^*_{\HE}$ admits a strong KT structure with  metric and Hermitian form  defined as
\be
(\alpha_1, \alpha_2)_{\mathscr{M}^*_{\HE}}\equiv (a^h_1, a^h_2)_{\mathscr{A}^*_{\HE}}~,~~~\Omega_{\mathscr{M}^*_{\HE}}(\alpha_1, \alpha_2)\equiv \Omega_{\mathscr{A}^*_{\HE}}(a^h_1, a^h_2)~,
\label{innerform}
\ee
where $a_1^h$ and $a_2^h$ are the horizontal lifts of the tangent vectors  $\alpha_1$ and $\alpha_2$ of $\mathscr{M}^*_{\HE}$ at $[A]$, respectively, and $(\cdot, \cdot)_{\mathscr{A}^*_{\HE}}$ and $\Omega_{\mathscr{M}^*_{\HE}}(\cdot, \cdot)$ is the restriction of the metric (\ref{Aherm})  and Hermitian form (\ref{Aform}) of $\mathscr{A}$  on $\mathscr{A}^*_{\HE}$. This means that $\mathscr{M}^*_{\HE}$ is a Hermitian manifold admitting a compatible $\h{\mathcal D}$ connection with  torsion ${\mathcal H}$ that is a closed 3-form.
\end{theorem}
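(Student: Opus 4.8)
The plan is to view $\pi:\mathscr{A}^*_{\HE}\to\mathscr{M}^*_{\HE}$ as a principal $\mathscr{G}^*$-bundle carrying the connection furnished by the splitting (\ref{split}), and to push the K\"ahler data $(\mathcal{I},(\cdot,\cdot)_{\mathscr{A}},\Omega_{\mathscr{A}})$ of the ambient affine space $\mathscr{A}$ down to the quotient through horizontal lifts. The first task is to check that the metric and Hermitian form (\ref{innerform}) descend, i.e. are independent of the chosen lift and of the representative $A$ of $[A]$. This is exactly where the Gauduchon condition $D^i\theta_i=0$ enters: as explained around (\ref{oop}) it makes $\mathcal{O}$ invertible, so every tangent vector of $\mathscr{M}^*_{\HE}$ has a \emph{unique} horizontal lift $a^h$ solving (\ref{tangentcon}) and (\ref{hor}), and the gauge invariance of the integrands in (\ref{Aherm})--(\ref{Aform}) then gives well-definedness.

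Next I would establish that $\mathcal{I}$ preserves the horizontal distribution, so that it descends to an almost complex structure $\mathcal{J}$ on $\mathscr{M}^*_{\HE}$, characterised by lifting $\mathcal{J}\alpha$ to $\mathcal{I}a^h$. Using $\ii_I^2=-1$ on $1$-forms, the trace part of (\ref{tangentcon}) for $\mathcal{I}a=-\ii_I a$ is $\omega\llcorner d_A\ii_I a=0$, which is the horizontality condition (\ref{hor}) on $a$, whereas the horizontality of $\mathcal{I}a$ is $\omega\llcorner d_A a=0$, which is the trace part of (\ref{tangentcon}) on $a$; and since $\ii_I$ scales the $(1,0)$ and $(0,1)$ parts of $a$ by $\pm\i$ and $I$ is integrable, $d_A\ii_I a$ remains of type $(1,1)$. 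Hence $\mathcal{I}a^h$ is again horizontal --- in contrast to a generic vertical vector, whose $\mathcal{I}$-image need not even lie in $T\mathscr{A}^*_{\HE}$ --- and this is precisely the purpose of (\ref{hor}). Integrability of $\mathcal{J}$ follows from that of the constant structure $\mathcal{I}$ on $\mathscr{A}$, whose Nijenhuis tensor vanishes along with that of $I$: projecting $N_{\mathcal{I}}=0$ to the quotient leaves only terms built from $\mathcal{I}$ acting on the vertical (curvature) parts of brackets of horizontal lifts, and these cancel by virtue of the Hermitian--Einstein condition (equivalently, one may invoke the complex-analytic structure that deformation theory confers on $\mathscr{M}^*_{\HE}$). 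Compatibility of $\mathcal{J}$ with the metric (\ref{innerform}) is inherited from that of $\mathcal{I}$ with $(\cdot,\cdot)_{\mathscr{A}}$, so $\mathscr{M}^*_{\HE}$ is a Hermitian manifold and therefore carries the unique compatible connection $\h{\mathcal D}$ with skew-symmetric torsion $\mathcal{H}=-\ii_{\mathcal{J}}d\Omega_{\mathscr{M}^*_{\HE}}$.

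The substantive part, and the point I expect to be the main obstacle, is the \emph{strong} condition $d\mathcal{H}=0$. I would compute $d\Omega_{\mathscr{M}^*_{\HE}}$ by pulling back to $\mathscr{A}^*_{\HE}$ and using the invariant formula for the exterior derivative in terms of brackets of horizontal lifts. Since $\Omega_{\mathscr{A}}$ is a constant, hence closed, form, all the directional-derivative terms match and the result collapses to $d\Omega_{\mathscr{M}^*_{\HE}}(\alpha_1,\alpha_2,\alpha_3)=\sum_{\mathrm{cyc}}\Omega_{\mathscr{A}}\big(\lsq a_1^h,a_2^h\rsq^v,a_3^h\big)$, where $\lsq a_1^h,a_2^h\rsq^v=d_A\epsilon_{12}$ is the vertical (curvature) part of the bracket. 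Writing $\Omega_{\mathscr{A}}(d_A\epsilon_{12},a_3^h)=(d_A\epsilon_{12},\mathcal{I}a_3^h)_{\mathscr{A}}$, integrating by parts on the compact $M^{2n}$, and using that $\mathcal{I}a_3^h$ is horizontal and hence obeys (\ref{hor}), each term becomes an integral over $M^{2n}$ of $\langle\epsilon_{12},\theta^i(\mathcal{I}a_3^h)_i\rangle_{\mathfrak g}$; this is non-zero precisely because $M^{2n}$ is not K\"ahler and is what produces the torsion $\mathcal{H}$. It then remains to apply $-\ii_{\mathcal{J}}$ and differentiate once more. I expect the difficulty to lie in organising the resulting curvature- and $\mathcal{O}^{-1}$-dependent terms so that they cancel in pairs, leaving an integral over the compact $M^{2n}$ of a total derivative together with a single term proportional to $D^i\theta_i$; the former vanishes by Stokes and the Gauduchon condition $D^i\theta_i=0$ kills the latter, so $d\mathcal{H}=0$ and the KT structure is strong.
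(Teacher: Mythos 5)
Your proposal is correct and follows essentially the same route as the paper: the paper states Theorem \ref{th:lt} without proof (deferring to L\"ubke--Teleman) but proves its dilaton generalization, Theorem \ref{th:skt}, by precisely your plan --- unique horizontal lifts from invertibility of $\mathcal{O}$ under the Gauduchon condition, $d\Omega_{\mathscr{M}^*_{\HE}}$ collapsing to curvature terms $\Theta(a_1^h,a_2^h)$ exactly as in (\ref{domegab}), the torsion $\mathcal{H}$ picking up the Lee form as in (\ref{mtorsion}), and $d\mathcal{H}$ reducing after integration by parts on the compact $M^{2n}$ to a term proportional to $D^i\big(e^{-2\Phi}K^\flat_i\big)$, which in your constant-dilaton/Gauduchon specialization is $D^i\theta_i=0$, cf.\ (\ref{dhm}). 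The one ingredient you leave implicit is the Bianchi identity for $\Theta$, which is the mechanism behind the pairwise cancellation you anticipate in the final step; with that named, your outline matches the paper's argument.
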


\begin{remark}
Even though the tangent vectors of $\mathscr{M}^*_{\HE}(M^{2n})$ depend only on the conformal class $[g]$ of the metric $g$ of the underlying Hermitian manifold $M^{2n}$, the geometry of $\mathscr{M}^*_{\HE}(M^{2n})$, like the metric and Hermitian form in (\ref{innerform}), depend on $g$ itself instead of its conformal class. Thus the geometry of $\mathscr{M}^*_{\HE}(M^{2n})$ is sensitive to the choice of the metric on $M^{2n}$ and so different choices for $g$ lead to different geometries on $\mathscr{M}^*_{\HE}(M^{2n})$.
\end{remark}

\subsection{Geometry of moduli spaces and the dilaton field}

\subsubsection{Metric and Hermitian form on the moduli space}

So far, the geometry of the Hermitian-Einstein connection moduli spaces has been investigated in the Gauduchon gauge, i.e. in the gauge that the Lee form $\theta$ of the metric on $M^{2n}$ is divergence free $D^i\theta_i=0$. This is not a restriction on the geometry of $(M^{2n}, g, \omega)$ as the conformal class of every Hermitian metric $g$ admits such a metric. However, in many applications, especially in the context of string theory and in manifolds with special holonomy, the relevant metric on $(M^{2n}, g, \omega)$ is not in the Gauduchon gauge and so $D^i\theta_i\not=0$.  We shall overcome this difficulty by introducing a scalar field $\Phi$ on $M^{2n}$, the dilaton. As a result, we shall investigate the geometry of the moduli space of Hermitian-Einstein connections in a way that incorporates the dilaton $\Phi$. Luckily, this can be done as an adaptation of the results that have already derived in the Gauduchon gauge.  As we shall point out, there is a class of geometries that such a scalar field naturally arises and these are precisely those that appear in the context of string theory, manifolds with special holonomy and  geometric flow gradient (generalised) solitons.

To begin consider a Hermitian manifold $(M^{2n}, g, \omega)$ with metric $g$ and form $\omega$, where $g$ may not be  in the Gauduchon gauge. Suppose that $M^{2n}$ also admits a scalar function $\Phi$. The Hermitian-Einstein condition (\ref{HEcond1}) is taken with respect to the complex structure $I$ and Hermitian form $\omega$ of $M^{2n}$. As a result, the tangent vectors $a$ of $\mathscr{A}^*_\HE$ satisfy the conditions (\ref{tangentcon})
but with a Hermitian form $\omega$ whose associated metric $g$  is not Gauduchon.  Moreover for $\mathscr{M}_\HE$ to admit a complex structure, the gauge fixing, or horizontality condition,  should be chosen as in (\ref{hor}), again with respect to the metric  $g$ (and Hermitian form $\omega$) that do not satisfy the Gauduchon condition.
Clearly, the conditions on the tangent vectors of $\mathscr{A}^*_\HE$ to be along $\mathscr{A}^*_\HE$ and horizontal are the same as those we have investigated so far. This potentially can allow us to decompose the tangent vectors $a$ of $\mathscr{A}^*_\HE$ into horizontal $a^h$ and vertical components $a^v$ as
\be
a=a^h+ a^v~,
\label{decom}
\ee
where $a^h$ satisfies (\ref{hor}) and $a^v=d_A\epsilon$ as the vectors tangent to the orbits of the gauge group in $\mathscr{A}^*_\HE$ are spanned by gauge transformations. However for the decomposition in (\ref{decom}) to define a connection on $\mathscr{A}^*_\HE$ it has to be unique -- for the proof of this statement,  the Gauduchon condition on the metric $g$ is traditionally needed.  In the absence of this, another approach is required.

One way to incorporate the dilaton in the analysis of the geometry on the moduli spaces on non-Gauduchon Hermitian manifolds $(M^{2n}, g, \omega)$ is to modify the inner product, and consequently the Hermitian form, on
$\mathscr{A}$ as
\begin{align}
(a_1, a_2)_{\mathscr{A}}&\equiv \int_{M^{2n}} d^n x \sqrt g\,\,e^{-2\Phi}\,\, g^{-1}\, \langle a_1, a_2\rangle_{\mathfrak{g}}=\int_{M^{2n}} d^n x \sqrt g\,\,e^{-2\Phi}\,\, g^{ij}\, \langle a_{1i}, a_{2j}\rangle_{\mathfrak{g}}~,
\cr
\Omega_{\mathscr{A}}(a_1, a_2)&\equiv ( a_1, \mathcal{I} a_2)_{\mathscr{A}} =\frac{1}{(n-1)!}\, \int_{M^{2n}} \,\,e^{-2\Phi}\,\, \omega^{n-1}\wedge  \langle a_1\wedge  a_2\rangle_{\mathfrak{g}}~,
\label{Aform2}
\end{align}
see \cite{gp} for the notation in the last equation.
Assuming that the decomposition (\ref{decom}) is unique, the inner product and Hermitian form on the moduli space $\mathscr{M}^*_\HE$ can be defined as
\begin{align}
(\alpha_1, \alpha_2)_{\mathscr{M}^*_\HE}&\equiv (a^h_1, a^h_2)_{\mathscr{A}^*_\HE}~,
\cr
\Omega_{\mathscr{M}^*_\HE}(\alpha_1, \alpha_2)&\equiv \Omega_{\mathscr{A}^*_\HE}(a^h_1, a^h_2)~,
\end{align}
where $a_1^h$ and $a_2^h$ are the horizontal lifts of the vectors $\alpha_1$ and $\alpha_2$ on $\mathscr{M}^*_\HE$, respectively,  $(\cdot, \cdot)_{\mathscr{A}^*_\HE}$ and $\Omega_{\mathscr{M}^*_\HE}(\cdot, \cdot)$ is the restriction of those on $\mathscr{A}$ in (\ref{Aform2}) onto $\mathscr{A}^*_\HE$.

\begin{theorem}\label{th:two}
Let $(M^{2n}, g, \omega)$ be a (non-Gauduchon) Hermitian manifold with dilaton field $\Phi$ and Lee form $\theta$. The decomposition of tangent vector $a$ on $\mathscr{A}^*_\HE$ in horizontal  and vertical components as $a=a^h+ a^v$  is unique provided that
\be
D^i\big(e^{-2\Phi} K^\flat_i\big)= \big(D^i (e^{-2\Phi}\theta_i)-D^2 e^{-2\Phi}\big)=0~,
\label{phig}
\ee
where $a^h$ satisfies the horizontality condition (\ref{hor}), $K^\flat=2 d \Phi+ \theta$ and $a^v=d_A\epsilon$.
\end{theorem}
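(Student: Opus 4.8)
The plan is to reduce uniqueness of the splitting (\ref{decom}) to injectivity of the operator $\mathcal{O}$ of (\ref{oop}), and then to prove that injectivity by a Bochner-type integration by parts taken with respect to the dilaton-weighted pairing of (\ref{Aform2}). First I would observe that the horizontality condition (\ref{hor}) and the operator $\mathcal{O}\epsilon=(D_A^iD_{Ai}+\theta^iD_{Ai})\epsilon$ are built solely from $g$, $\omega$ and $\theta$ and make no reference to $\Phi$; they are therefore identical to their Gauduchon-case counterparts. Writing $a=a^h+d_A\epsilon$ and imposing (\ref{hor}) on $a^h$ forces $\mathcal{O}\epsilon=-\omega\llcorner d_A^c a$, exactly as in (\ref{oop}). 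Since $\mathscr{G}^*$ acts freely, $d_A\epsilon=0$ holds only for $\epsilon=0$, so the splitting is unique precisely when $\ker\mathcal{O}=\{0\}$.

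The heart of the proof, and the step where (\ref{phig}) enters, is establishing $\ker\mathcal{O}=\{0\}$. Assuming $\mathcal{O}\epsilon=0$, I would pair against $\epsilon$ with the weight $e^{-2\Phi}$ dictated by (\ref{Aform2}) and integrate over the compact $M^{2n}$. Integrating the second-order term $D_A^iD_{Ai}\epsilon$ by parts yields $-\int_{M^{2n}}\sqrt g\,e^{-2\Phi}|D_A\epsilon|^2$ together with a first-order remainder whose coefficient is $2\partial^i\Phi$; adjoining the $\theta^iD_{Ai}\epsilon$ contribution makes the total first-order coefficient equal to $K^i$ with $K^\flat=2d\Phi+\theta$. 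Using the symmetry relation $\langle\epsilon,D_{Ai}\epsilon\rangle_{\mathfrak g}=\tfrac12\partial_i\langle\epsilon,\epsilon\rangle_{\mathfrak g}$ and integrating once more, the first-order remainder becomes $-\tfrac12\int\sqrt g\,D^i(e^{-2\Phi}K^\flat_i)\,|\epsilon|^2$, which is annihilated exactly by the hypothesis (\ref{phig}). What remains is $\int\sqrt g\,e^{-2\Phi}|D_A\epsilon|^2=0$, forcing $D_A\epsilon=0$ and hence $\epsilon=0$. The bookkeeping of this weighted integration by parts is the main obstacle: the point is that the particular weight $e^{-2\Phi}$ is what recombines the dilaton gradient $2d\Phi$ with the Lee form $\theta$ into the single one-form $K^\flat$ and turns the first-order remainder into a pure divergence controlled by (\ref{phig}), so that (\ref{phig}) plays precisely the role that the Gauduchon condition $D^i\theta_i=0$ plays in the unweighted problem.

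For existence of the splitting I would appeal to ellipticity. The operator $\mathcal{O}$ has principal symbol $|\xi|^2\,\mathrm{Id}$ on sections of $P\times_{\Adj}\mathfrak g$ over the compact manifold $M^{2n}$, so it is Fredholm of index zero, and the triviality of $\ker\mathcal{O}$ just obtained forces $\mathrm{coker}\,\mathcal{O}=\{0\}$; hence $\mathcal{O}$ is onto and $\mathcal{O}\epsilon=-\omega\llcorner d_A^c a$ is solvable for every $a$, which together with uniqueness produces the decomposition (\ref{decom}). A secondary subtlety worth flagging is that $\mathcal{O}$ is not self-adjoint with respect to the weighted pairing, its first-order part being $+\theta^iD_{Ai}$ rather than the $-2\partial^i\Phi\,D_{Ai}$ of the weighted scalar Laplacian $e^{2\Phi}D^i(e^{-2\Phi}D_{Ai}\cdot)$, so surjectivity must be extracted from the index-zero property rather than from self-adjointness.
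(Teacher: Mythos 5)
Your proposal is correct, and its main engine is the same as the paper's: you reduce uniqueness of the splitting to $\ker\mathcal{O}=\{0\}$ via the free $\mathscr{G}^*$-action, and you establish that by pairing $\mathcal{O}\epsilon$ against $\epsilon$ with the weight $e^{-2\Phi}$ and integrating by parts; your remainder $-\tfrac12\int\sqrt{g}\,D^i(e^{-2\Phi}K^\flat_i)|\epsilon|^2$ is precisely the paper's divergence term $\int\sqrt{g}\,D_i\big(e^{-2\Phi}(\partial^i\Phi+\tfrac12\theta^i)\big)\langle\epsilon,\epsilon\rangle_{\mathfrak{g}}$, killed by the hypothesis (\ref{phig}), after which $d_A\epsilon=0$ and irreducibility give $\epsilon=0$. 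Where you genuinely diverge is surjectivity. The paper computes the adjoint of $\mathcal{O}$ with respect to the weighted pairing, $\mathcal{O}^\dagger=D_A^iD_{Ai}-(\theta_i+4\partial_i\Phi)D_A^i$, and shows $\ker\mathcal{O}^\dagger=\{0\}$ by a second Bochner computation of the same type — the same divergence $D^i(e^{-2\Phi}K^\flat_i)$ reappears there, so (\ref{phig}) controls both kernels at once. You instead note that $\mathcal{O}$ has the principal symbol of the connection Laplacian, hence is elliptic and Fredholm of index zero on the compact $M^{2n}$, so injectivity forces surjectivity. Both routes are sound: yours dispenses with the second integration by parts at the price of invoking the Fredholm/Sobolev package (and of using compactness explicitly, which the theorem statement leaves implicit but the paper's integrals require anyway), whereas the paper's argument is self-contained and makes visible the structural point that the single condition (\ref{phig}) renders both $\mathcal{O}$ and $\mathcal{O}^\dagger$ injective. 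Your closing observation that $\mathcal{O}$ is not self-adjoint in the weighted pairing is accurate and consistent with the paper's formula for $\mathcal{O}^\dagger$: self-adjointness would require $\theta=-2d\Phi$, i.e.\ $K^\flat=0$, the conformally balanced case, so extracting surjectivity from index zero (or from $\ker\mathcal{O}^\dagger=\{0\}$) rather than self-adjointness is indeed necessary.
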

\begin{proof}
Applying the horizontality condition (\ref{hor}) on $a$, we find that
\be
D_A^i a_i+ \theta^i a_i=(D^i_AD_{Ai}+\theta^i D_{Ai}) \epsilon~,
\ee
as $a^v=d_A\epsilon$. To prove that the decomposition $a=a^h+ a^v$  is unique, one has to show that the operator
\be
\mathcal{O}=D^i_AD_{Ai}+\theta^i D_{Ai}~,
\ee
is invertible.

First, the kernel of $\mathcal{O}$ is trivial. Indeed, suppose that there is $\epsilon\not=0$ in the kernel of $\mathcal{O}$. Then,
\begin{align}
&\int_M  d^{2n}x\,\sqrt{g}\,\, e^{-2\Phi}\,\, \langle \epsilon, (D_A^i(d_A)_i+\theta^i  (d_A)_i) \epsilon\rangle_{\mathfrak{g}}
\cr
&
\qquad =-\int_M d^{2n}x\,\sqrt{g}\,\, e^{-2\Phi}\,\, \Big(\langle d_A^i\epsilon, (d_A)_i\epsilon \rangle_{\mathfrak{g}}- \partial^i\Phi \partial_i\langle\epsilon, \epsilon \rangle_{\mathfrak{g}}-{1\over2} \theta^i \partial_i \langle\epsilon,  \epsilon\rangle_{\mathfrak{g}}\Big)
\cr
&\qquad =-\int_M d^{2n}x\,\sqrt{g}\, \Big( e^{-2\Phi} \langle d_A^i\epsilon, (d_A)_i\epsilon \rangle_{\mathfrak{g}}+ D_i\big(e^{-2\Phi}(\partial^i\Phi+{1\over2} \theta^i)\big) \langle\epsilon, \epsilon \rangle_{\mathfrak{g}}\Big)=0~.
\end{align}
Using the hypothesis of the theorem, the above identity implies that $d_A\epsilon=0$. As for irreducible connections $A$ $d_A\epsilon=0$ implies that $\epsilon=0$, the kernel of $\mathcal{O}$ is trivial.

It remains to show that $\mathcal{O}$ is onto. Suppose that it is not and there is $\eta$ that is orthogonal to the image of $\mathcal{O}$. Using the hypothesis of the theorem (\ref{phig}), this implies that
\begin{align}
\int_M \sqrt{g} d^{2n}x\, e^{-2\Phi} &\langle \eta, (D_A^iD_{A i}+\theta_i  D^i_{A}) \epsilon\rangle_{\mathfrak{g}}
\cr
&
=\int_M \sqrt{g} d^{2n}x\, e^{-2\Phi} \langle \big(D_A^iD_{Ai}-(\theta_i+4 \partial_i\Phi)  D_A^i\big)\eta, \epsilon\rangle_{\mathfrak{g}}=0~,
\end{align}
where the operator $\mathcal{O}^\dagger= D_A^iD_{Ai}-(\theta_i+4 \partial_i\Phi)  D_A^i$ is the adjoint of $\mathcal{O}$.  Thus $\eta$ is in the kernel of $\mathcal{O}^\dagger$. However, $\mathcal{O}^\dagger$ has a trivial kernel.  This computation is similar to that presented for $\mathcal{O}$. Therefore we conclude that $\mathcal{O}$ is invertible and the decomposition of the tangent space of $\mathscr{A}^*_{\HE}$ into horizontal and vertical subspaces is unique.
\end{proof}

\begin{remark}
The condition (\ref{phig}) is satisfied provided that the 1-form $K^\flat=\theta+2d\Phi$ is Killing and $K\Phi=0$.  As we shall demonstrate this condition allows for the inclusion in this analysis  of the geometries  that occur in string theory, those on manifolds with special holonomy and those of (generalised) solitons of Hermitian geometric flows. Moreover, (\ref{phig}) can possibly be weaken further, e.g if $D^i\big(e^{-2\Phi}(\partial_i\Phi+{1\over2} \theta_i)\big)\geq 0$, the kernel of $\mathcal{O}$ is trivial. However  in what follows, the stronger version of the condition stated in the theorem above will suffice.
\end{remark}

\subsubsection{Geometry of moduli spaces}

The investigation of the geometry of $\mathscr{M}^*_\HE$ in the presence of the dilaton field $\Phi$ can be carried out with a few  changes from that that has been described for Hermitian manifolds equipped with the Gauduchon metric \cite{lubke}.  Because of this, we shall not elaborate on the proof. Instead, we shall give the statement and only emphasise the differences in the proof when they arise.

Before we proceed with the proof of one of the main theorems, let $a_2^h$ be a horizontal vector field $D_A^i a_{2i}+\theta^i a_{2i}=0$.  Taking the directional derivative\footnote{The directional derivative of a function $f$ on $\mathscr{A}$ along the vector field $a$  is defined as $a\cdot f(A)\equiv \frac{d}{dt} f(A+ t a)\vert_{t=0}$. The commutator of two vector fields $a_1$ and $a_2$ on $\mathscr{A}$ is then defined as $\lsq a_1, a_2\rsq f\equiv a_1 \cdot (a_2\cdot f)- a_2\cdot(a_1\cdot f)$.} of this condition along $a^h_1$ and then the commutator of $a^h_1$ and $a^h_2$, one finds that
\be
2g^{ij} [a_{1i}, a_{2j}]_{\mathfrak{g}}+D_A^i\lsq a_1, a_2\rsq_i+\theta^i \lsq a_1, a_2\rsq_i=0\Longrightarrow D_A^i\lsq a_1, a_2\rsq^v_i+\theta^i \lsq a_1, a_2\rsq^v_i=-2g^{ij} [a_{1i}, a_{2j}]_{\mathfrak{g}}
\ee
where  $\lsq a_1, a_2\rsq^v$ is the vertical component of the commutator. As this is a vector field along the orbits of the gauge group, there is a $\Theta$ such that
$\lsq a^h_1, a^h_2\rsq^v=-d_A\Theta(a_1^h, a_2^h)$ with
\be
\mathcal{O}\Theta(a_1^h, a_2^h)=2g^{ij} [a^h_{1i}, a^h_{2j}]_{\mathfrak{g}}~.
\label{diftheta}
\ee
$\Theta$ is interpreted as the curvature associated with the horizontality condition (\ref{hor}). As expected it is a Lie algebra valued 2-form on $\mathscr{M}^*_{\HE}$, where the Lie algebra is that of $\mathscr{G}^*$. The proof of the theorem below is structured along the lines of that presented in \cite{gp} for Gauduchon manifolds.

\vskip 0.3cm
{\bf{ Theorem}} \ref{th:skt} stated at the introduction.
\begin{proof}
To prove this, let us first compute the exterior derivative of the Hermitian form $\Omega_{\mathscr{M}^*_\HE}$.  A computation similar to that performed in the Gauduchon gauge reveals that
\begin{align}
d\Omega_{\mathscr{M}^*_{\HE}}&(\alpha_1, \alpha_2, \alpha_3)=-\frac{1}{(n-1)!} \,\int_{M^{2n}}\,\, e^{-2\Phi}\,\, \omega^{n-1}\wedge \langle d_A\Theta (a_1^h, a_2^h) \wedge  a^h_3\rangle_{\mathfrak{g}}+ \mathrm{cyclic~ in~} (a^h_1, a^h_2, a^h_3)
\cr
&=\frac{1}{(n-1)!} \,\int_{M^{2n}} d(e^{-2\Phi}\,\,\omega^{n-1})\wedge \langle \Theta (a_1^h, a_2^h),   a^h_3\rangle_{\mathfrak{g}} + \mathrm{cyclic~ in~} (a^h_1, a^h_2, a^h_3)
\label{domega}
\end{align}
Using this, one can show that the 3-form torsion is given by
\begin{align}
\mathcal {H}_{\mathscr{M}^*_{\HE}}&(\alpha_1, \alpha_2, \alpha_3)=-d^c\Omega_{\mathcal{M}^*_{\HE}}(\alpha_1, \alpha_2, \alpha_3)=-\ii_{\mathcal{I}}  d\Omega_{\mathcal{M}^*_{\HE}}(\alpha_1, \alpha_2, \alpha_3)
\cr
&
=-\frac{1}{(n-1)!} \,\int_{M^{2n}} d^c\big( e^{-2\Phi}\,\,\omega^{n-1}\big)\wedge \Big(\langle \Theta (a_1^h, a_2^h),   a^h_3\rangle_{\mathfrak{g}} + \mathrm{cyclic~ in~} (a^h_1, a^h_2, a^h_3)\Big)
\cr
&
=- \,\int_{M^{2n}}d^{2n}x\,\sqrt{g}\,\,e^{-2\Phi}\,\, (\theta_i+2\partial_i\Phi)\, \, g^{ij}\,  \Big(\langle \Theta (a_1^h, a_2^h),   a^h_{3j}\rangle_{\mathfrak{g}} + \mathrm{cyclic~ in~} (a^h_1, a^h_2, a^h_3)\Big)~.~.
\label{mtorsion}
\end{align}
As $\Theta$ is a $(1,1)$-form,  $\mathcal {H}_{\mathscr{M}^*_{\HE}}$ is a $(2,1)\oplus (1,2)$-form.  Moreover, one can demonstrate that by construction $d\Omega_{\mathscr{M}^*_{\HE}}=\ii_{\mathcal{I}}\mathcal {H}_{\mathscr{M}^*_{\HE}}$.  These prove that
\be
\h{\mathcal{D}} \mathcal{I}=0
\ee
and so $\mathscr{M}^*_{\HE}$ admits a KT structure.

It remains to show that $\mathcal {H}_{\mathscr{M}^*_{\HE}}$ is closed.  For this, one has to compute the exterior derivative of $\mathcal {H}_{\mathscr{M}^*_{\HE}}$. A straightforward computation reveals, using the Bianchi identity for $\Theta$, that
\begin{align}
d \mathcal {H}_{\mathcal{M}^*_{\HE}}(\alpha_1, \alpha_2, \alpha_3, \alpha_4)
&=\int_{M^{2n}}d^{2n}x\,\sqrt{g}\,\,e^{-2\Phi}\,\, (\theta_i+2\partial_i\Phi)\, \, g^{ij}\,   \Big(D_j \big(\langle \Theta (a_1^h, a_2^h),  \Theta(a_3^h, a_4^h)\rangle_{\mathfrak{g}}\big)
\cr
&\qquad\qquad\qquad + \mathrm{cyclic~ in~} (a^h_1, a^h_2, a^h_3)\Big)
\cr
&=-\int_{M^{2n}}d^{2n}x\,\sqrt{g}\,\,D^i\Big(e^{-2\Phi}\,\, (\theta_i+2\partial_i\Phi)\Big)  \Big(\langle \Theta (a_1^h, a_2^h),  \Theta(a_3^h, a_4^h)\rangle_{\mathfrak{g}}
\cr
&
\qquad\qquad \qquad + \mathrm{cyclic~ in~} (a^h_1, a^h_2, a^h_3)\Big)=0~,
\label{dhm}
\end{align}
where we have used the hypothesis  of the theorem.
\end{proof}

\begin{remark}
Conformally balanced is a class of Hermitian manifolds $(M^{2n}, g, \omega)$ for which the Lee form is an exact 1-form, i.e. $\theta=-2d\Phi$. Clearly for those, $\mathscr{M}^*_\HE$ is a K\"ahler manifold -- observe that the torsion ${\mathcal H}$ in (\ref{mtorsion})  vanishes . Such Hermitian manifolds occur in supersymmetric string compactifications -- the conformal balanced condition is a requirement imposed on the geometry of the internal space $M^{2n}$ as a consequence of the dilatino Killing spinor equation.  Moreover, the gaugino Killing spinor equation implies that the connection of the gauge sector satisfies the Hermitian-Einstein condition. Therefore, one consequence of our results is that for fixed metric and complex structure on the internal space, the part of the moduli space of such compactifications due to the choice of a Hermitian-Einstein connection is a K\"ahler manifold, see also \cite{mcorist1, mcorist2, mcorist3}.  It should be also noted that the conformal balanced condition puts strong restrictions on Hermitian manifolds. For example, compact conformally balanced Hermitian manifolds $(M^{2n}, g, \omega)$ with a strong KT structure, $dH=0$, whose holonomy of the $\h\nabla$ connection is included in $SU(n)$, are Calabi-Yau and so $H=0$ \cite{SIGP3}.
\end{remark}

\subsubsection{Symmetries and the invariance of the dilaton}\label{sec:ex}

To illustrate what kind  of geometries   satisfy the condition (\ref{phig}), $D^i\big(e^{-2\Phi} K^\flat_i\big)=0$,   of the Theorem \ref{th:two}, we shall demonstrate the following proposition.
\begin{prop}\label{prop:kinv}
Let $(M^{2n}, g, \omega)$ be a compact Hermitian manifold such that it satisfies the scale invariance condition (\ref{sinv}) and suppose that $V^\flat=X^\flat+2d\Phi$ is Killing and
leaves both $H$ and  $P$ invariant, $\mathcal{L}_V H=\mathcal{L}_V P=0$. Then, $\Phi$ is also invariant, $\mathcal{L}_V \Phi=0$, and such a geometry satisfies (\ref{phig}).

Furthermore, if $(M^{2n}, g, \omega)$ is CYT, then the statement above holds for $K^\flat=\theta+2d\Phi$ provided again that $K$ is Killing and leaves both $H$ and $P$ invariant.
 \end{prop}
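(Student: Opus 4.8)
The plan is to collapse the tensorial scale-invariance condition (\ref{sinv}) to a single scalar identity by tracing it, and then to extract $\mathcal{L}_V\Phi=0$ by a Hopf/Bochner-type argument on the compact $M^{2n}$; condition (\ref{phig}) then follows algebraically. Concretely, I would first take the $g$-trace of (\ref{sinv}). Because $P$ is symmetric while $H$ is totally skew, every torsion term in $g^{ij}\h\nabla_i X^\flat_j$ collapses and the right-hand side reduces to the Levi-Civita divergence $2D^iX_i$. Writing $X^\flat=V^\flat-2d\Phi$ and using that $V$ is Killing, so $D^iV_i=0$, the trace becomes the scalar equation
\be
\h R-\tfrac14 P=-4\,D^2\Phi~,
\ee
with $\h R=g^{ij}\h R_{ij}=R-\tfrac14|H|^2$ and $P=g^{ij}P_{ij}$; all the $\Phi$-dependence is now carried by the single scalar $D^2\Phi$.

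Next I would apply $\mathcal{L}_V$ to this identity. The scalar $\h R$ is built solely from $g$ and $H$, hence is annihilated by $\mathcal{L}_V$ since $V$ is Killing with $\mathcal{L}_V H=0$; likewise $\mathcal{L}_V(g^{ij}P_{ij})=0$ from $\mathcal{L}_V g=0$ and $\mathcal{L}_V P=0$. The left-hand side is therefore $V$-invariant, giving $V(D^2\Phi)=0$, and since a Killing field commutes with the Laplace--Beltrami operator one has $V(D^2\Phi)=D^2(V\Phi)$. Thus $V\Phi$ is harmonic on the compact $M^{2n}$, hence constant; integrating $V\Phi=V^i\partial_i\Phi$ and integrating by parts against $D_iV^i=0$ forces the constant to vanish, so $\mathcal{L}_V\Phi=0$. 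With $V\Phi=0$ and $D^iV_i=0$ condition (\ref{phig}) is then immediate, since
\be
D^i\big(e^{-2\Phi}V^\flat_i\big)=e^{-2\Phi}\big(D^iV_i-2\,V\Phi\big)=0~,
\ee
which is (\ref{phig}) with $K^\flat$ replaced by $V^\flat$.

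For the CYT refinement I would invoke the special structure of KT geometry with $\h\nabla$-holonomy in $SU(n)$, where the Bismut--Ricci form vanishes. The identities relating the symmetric $\h\nabla$-Ricci tensor to the Lee form (the extension of Proposition~4.1 of \cite{jsyu}) then realise the scale-invariance vector as the Lee vector, $X^\flat=\theta$, so that $V^\flat=X^\flat+2d\Phi=\theta+2d\Phi=K^\flat$ and hence $V=K$; the argument above applies verbatim with $K$ in place of $V$, yielding $K\Phi=0$ and (\ref{phig}) literally. I expect this CYT step to be the main obstacle: the trace-and-harmonic argument is essentially formal once the invariances are granted, whereas identifying $X^\flat$ with $\theta$ relies on the specific curvature identities of $SU(n)$-KT geometry and on careful bookkeeping of the torsion terms.
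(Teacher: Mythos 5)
Your proposal is correct, but the mechanism you use to extract $V\Phi=0$ is genuinely different from the paper's. The paper, following Lemma 2.9 of \cite{as3}, takes the Lie derivative of the full tensorial equation (\ref{sinv}) along $V$: since $V$ is Killing and preserves $H$ and $P$, $\mathcal{L}_V$ annihilates $\h R_{ij}-\frac14 P_{ij}$ and commutes with $\h\nabla$, while $\mathcal{L}_V X^\flat=[V,X]^\flat=-2\,d(V\Phi)$, so one lands on the full Hessian equation $D_iD_j(V\Phi)=0$ and only then traces to get $D^2(V\Phi)=0$. You instead trace (\ref{sinv}) at the outset, using that the totally skew torsion drops out of $g^{ij}\h\nabla_i X^\flat_j$ and that $D^iV_i=0$, which collapses everything to the scalar identity $\h R-\frac14 g^{ij}P_{ij}=-4D^2\Phi$, and then invoke $[\mathcal{L}_V,D^2]=0$ for Killing $V$ to conclude $D^2(V\Phi)=0$. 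From that point the two arguments coincide: harmonic functions on a compact (connected) manifold are constant, and $\int_M \sqrt{g}\,V\Phi=\int_M\sqrt{g}\,D_i(\Phi V^i)=0$ kills the constant, after which (\ref{phig}) is the same one-line computation, and your CYT reduction -- the Bianchi identity contracted with $I^k{}_j\omega^{mn}$ under $SU(n)$ holonomy (vanishing Bismut--Ricci form) identifying $X^\flat$ with $\theta$, hence $V=K$ -- is exactly the paper's. What your trace-first route buys is economy of hypotheses at the key step: you need only the $V$-invariance of the natural scalar $\h R=R-\frac14|H|^2$ and the standard commutation of a Killing field with the Laplace--Beltrami operator, avoiding any appeal to $\mathcal{L}_V$ commuting with the torsion connection; what it costs is weaker intermediate information, since you obtain only $D^2(V\Phi)=0$ rather than $D_iD_j(V\Phi)=0$ (so you do not learn that $d(V\Phi)$ is $D$-parallel) -- immaterial here, as only harmonicity is used.
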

 \begin{proof}
The technique that we use  to prove  this statement is similar to that employed for the proof of Lemma 2.9 in \cite{as3}, where $P=0$. Taking the Lie derivative of (\ref{sinv}) along $K$ and using that $V$ is Killing and leaves both $H$ and $P$ invariant, one finds that
 \be
\h\nabla_i [V, X]^\flat_j=0\Longrightarrow \h\nabla_i \mathcal{L}_V (D_j\Phi)\Longrightarrow  D_i D_j (V\Phi)=0 \Longrightarrow D^2 (V\Phi)=0\Longrightarrow V\Phi=0~.
 \ee
 The last step follows because the $D^2 (V\Phi)=0$ condition implies that $V\Phi$ is constant. However, this constant must vanish because
 \be
 \int_{M^{2n}} d^{2n} x \,\,\sqrt{g}\,\, V\Phi=0~.
 \ee
 As $V$ is Killing and leaves invariant $\Phi$, (\ref{phig}) is satisfied.

 CYT manifolds are KT manifolds $(M^{2n}, g, \omega)$ such that the holonomy of $\h\nabla$ is included in $SU(n)$. Such manifolds satisfy the scale invariance condition (\ref{sinv}). To prove this, consider the Bianchi identity
\be
\h R_{ikmn}+\mathrm{cyclic ~in}~~ (k,\,m,\,n)=-\h \nabla_i H_{kmn}+\frac{1}{2} dH_{ikmn}~.
\ee
Contracting this with $I^m{}_j \omega^{mn}$ and upon using that the holonomy of $\h\nabla$ is included in $SU(n)$, we find that
\be
 \h R_{ij}-\frac{1}{4} P_{ij}=\h\nabla_i\theta_j~,
 \label{rhoeqnxx}
 \ee
 where $P_{ij}= dH_{ikmn} I^k{}_j\omega^{mn}$.
 Comparing (\ref{sinv}) with (\ref{rhoeqnxx}), we conclude that $X^\flat=\theta$.  The rest of the proof is as in the KT case above.
 \end{proof}

 \begin{remark}
 This proposition holds under weaker assumptions, see also \cite{as3}.  Consider a compact Riemannian manifold $(M^d, g, H)$  that satisfies the scale invariance condition (\ref{sinv}) with metric $g$ and torsion 3-form $H$.  Suppose that $K^\flat=Z^\flat+2 d\Phi$ is Killing and leaves both $H$ and $P$ invariant, then it also satisfies (\ref{phig}). The proof is along similar lines to the one explained above for Hermitian manifolds.
 \end{remark}

 \begin{remark}
 The proposition above can be further refined.  For example if $(M^{2n}, g, \omega)$ is a strong KT manifold that satisfies (\ref{sinv}), then it is a consequence of the results of \cite{GFS, GFJS} and \cite{pw} that it also satisfies (\ref{cinv}).  In such a case $K$ is $\h\nabla$-covariantly constant. Therefore, $K$ is both Killing and leaves $H$ invariant.  Such refinements of Proposition \ref{prop:kinv} will be explored in more detail in section   \ref{sec:k}. Explicit examples include compact group manifolds with a KT structure  and equipped with a bi-invariant metric and torsion 3-form \cite{Spindel, OP}.
 \end{remark}

\section{Symmetries of the moduli spaces}

To investigate the symmetries of the moduli space $\mathscr{M}_\HE(M^{2n})$ induced from those on $(M^{2n}, g, \omega)$, we shall need the proof of three key Lemmas \ref{le:one}, \ref{le:hor} and \ref{le:main}. Lemma \ref{le:one} holds for any KT manifold $(M^{2n}, g, \omega)$.  To prove \ref{le:hor} and \ref{le:main} depends on the invertibility of the operator $\mathcal{O}$. This holds whenever $M^{2n}$ is compact and the metric $g$ is either Gauduchon, $D^i\theta_i=0$, or satisfies the condition
$D^i(e^{-2\Phi} K^\flat_i)=0$ with $K^\flat=\theta+2d\Phi$.  Thus for the lemmas to hold, it is not necessary to assume that $g$ is Gauduchon.

Before we proceed to investigate the symmetries of the moduli space $\mathscr{M}_\HE(M^{2n})$ induced from those of the underlying manifold $(M^{2n}, g, \omega)$ in the presence of dilaton,
it is convenient first to carry out the computations in the Gauduchon gauge.  This will complement and provide a proof for the results originally stated in \cite{gp}.  Later in section \ref{sec:d}, we shall introduce a dilaton field on $M^{2n}$ and describe the symmetries of $\mathscr{M}_\HE(M^{2n})$ in the presence of a dilaton. For clarity, we shall mention explicitly whenever $g$ is restricted to satisfy the Gauduchon condition.

\subsection{Vector fields on the moduli space}

Given a vector field $X$ on $M^{2n}$, one can induce a vector field $a_X\equiv \ii_X F$
on $\mathscr{A}$, where $F$ is the curvature of the connection $A$. This can be generalised somewhat by noticing that $a_X\equiv \ii_X F-d_A\eta$ is also a vector field on $\mathscr{A}$ for every $\eta\in \Omega^0(P\times_{\Adj} \mathfrak{g})$. For Hermitian manifolds, this result admits a refinement as follows.

\begin{lemma}\label{le:one}
Let $(M^{2n}, g, \omega)$ be a Hermitian manifold and $F$ be the curvature of an Einstein-Hermitian connection $A$. If $X$ is a holomorphic and Killing vector field on $M^{2n}$, then
\be
 a_X=\ii_X F-d_A\eta
 \ee
  is a vector field on $\mathscr{A}^*_{\HE}$ for every $\eta\in \Omega^0(P\times_{\Adj} \mathfrak{g})$.
  \end{lemma}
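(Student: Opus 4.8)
The plan is to verify directly that $a_X=\ii_X F-d_A\eta$ satisfies, at each Hermitian-Einstein connection $A$, the two conditions (\ref{tangentcon}) that characterise the tangent space of $\mathscr{A}^*_{\HE}$, namely that $d_A a_X$ is a $(1,1)$-form (its $(2,0)$ and $(0,2)$ parts vanish) and that it is $\omega$-traceless, $\omega\llcorner d_A a_X=0$. Since $a_X$ is manifestly an element of $\Omega^1(P\times_{\Adj}\mathfrak{g})$, these are exactly what is required. First I would compute $d_A a_X=d_A\ii_X F-d_A^2\eta=d_A\ii_X F-[F,\eta]_{\mathfrak{g}}$, using $d_A^2\eta=[F,\eta]_{\mathfrak{g}}$, and then rewrite the first term via the gauge-covariant Cartan formula $\mathcal{L}^A_X=d_A\ii_X+\ii_X d_A$ together with the Bianchi identity $d_A F=0$, which gives $d_A\ii_X F=\mathcal{L}^A_X F=\mathcal{L}_X F+[\ii_X A,F]_{\mathfrak{g}}$, where $\mathcal{L}_X$ is the ordinary Lie derivative.

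For the type condition, recall that the Hermitian-Einstein condition (\ref{HEcond1}) makes $F$ a $(1,1)$-form. Because $X$ is holomorphic, $\mathcal{L}_X I=0$, so $\mathcal{L}_X$ commutes with the projections onto $(p,q)$-forms and hence $\mathcal{L}_X F$ is again $(1,1)$. The correction $[\ii_X A,F]_{\mathfrak{g}}$ is the bracket of a $0$-form with a $(1,1)$-form and is thus $(1,1)$, and likewise $[F,\eta]_{\mathfrak{g}}$ is $(1,1)$. Therefore $d_A a_X$ has no $(2,0)$ or $(0,2)$ component, which is the first condition in (\ref{tangentcon}).

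For the trace condition I would use that $X$ is both holomorphic and Killing, so $\mathcal{L}_X g=\mathcal{L}_X I=0$ and hence $\mathcal{L}_X\omega=0$ and $\mathcal{L}_X\omega^{ij}=0$; this lets $\mathcal{L}_X$ pass through the contraction, $\omega\llcorner\mathcal{L}_X F=\mathcal{L}_X(\omega\llcorner F)$. Since $\omega\llcorner F=\lambda$ is a central constant, $\mathcal{L}_X(\omega\llcorner F)=X\lambda=0$. The remaining pieces vanish because $\lambda$ commutes with everything: $\omega\llcorner[\ii_X A,F]_{\mathfrak{g}}=[\ii_X A,\lambda]_{\mathfrak{g}}=0$ and $\omega\llcorner[F,\eta]_{\mathfrak{g}}=[\lambda,\eta]_{\mathfrak{g}}=0$. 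Assembling these yields $\omega\llcorner d_A a_X=0$, the second condition, and completes the verification.

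The computation is essentially mechanical once these two structural inputs are in place; the step requiring the most care — and where I expect the only real subtlety — is the passage to the gauge-covariant Lie derivative. One must keep track that the ordinary $\mathcal{L}_X F$ is not gauge covariant and is corrected by $[\ii_X A,F]_{\mathfrak{g}}$, and then check this correction is harmless for both conditions (it is $(1,1)$ and its $\omega$-trace is $[\ii_X A,\lambda]_{\mathfrak{g}}=0$). Equivalently one could argue gauge-covariantly throughout and avoid $\ii_X A$ altogether, but either way the centrality of $\lambda$ and the holomorphic-Killing property of $X$ are precisely what make both tangency conditions hold, with the arbitrary $\eta$ contributing only the harmless vertical term $-[F,\eta]_{\mathfrak{g}}$.
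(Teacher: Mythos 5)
Your proposal is correct and follows essentially the same route as the paper: both verify the two tangency conditions (\ref{tangentcon}) by writing $d_A a_X=\mathcal{L}^A_X F-[F,\eta]_{\mathfrak{g}}$ via the Cartan formula and the Bianchi identity, then use $\mathcal{L}_X I=0$ for the $(1,1)$ condition and $\mathcal{L}_X\omega=0$ together with the centrality of $\omega\llcorner F=\lambda$ for the trace condition. Your only variation --- splitting $\mathcal{L}^A_X F$ into the ordinary Lie derivative plus the (locally defined, gauge-dependent) correction $[\ii_X A,F]_{\mathfrak{g}}$ and checking each piece separately --- is a cosmetic repackaging of the paper's direct evaluation of $\mathcal{L}^A_X F$ on $(IY,IZ)$, and you handle it correctly.
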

  \begin{proof}
  It suffices to show that $d_Aa_X$ is a $(1,1)$-form and $\omega$-traceless, i.e. it satisfies the condition (\ref{tangentcon}). First,
  \be
  d_A a_X=d_A\ii_X F-d_A^2 \eta=(d_A\ii_X+\ii_X d_A) F-[F,\eta]_{\mathfrak{g}}=\mathcal{L}^A_X F-[F,\eta]_{\mathfrak{g}}~,
  \ee
  where the Bianchi identity $d_A F=0$ has been used. Using that $X$ is holomorphic, $\mathcal{L}_X I=0$,
  \begin{align}
   d_A a_X(IY, IZ)&=(\mathcal{L}^A_X F)(IY, IZ)-[F(IY, IZ),\eta]_{\mathfrak{g}}=(\mathcal{L}^A_X  I F)(Y, Z)-[F(IY, IZ),\eta]_{\mathfrak{g}}
   \cr
   &
   =(\mathcal{L}^A_X   F)(Y, Z)-[F(Y, Z),\eta]_{\mathfrak{g}}=d_A a_X(Y, Z)~,
   \end{align}
   where $(I F)(Y,Z)= F(IY, IZ)$, i.e. the condition that $F$ is a $(1,1)$-form can be expressed as $IF=F$, and we have used that $F$ is a $(1,1)$-form. Moreover, $\mathcal{L}^A_X\equiv \ii_X d_A+d_A \ii_X$.  Thus $d_A a_X$ is a $(1,1)$-form.

   Next, $d_A a_X$ is $\omega$-traceless as
   \be
   \omega\llcorner d_A a_X=\omega\llcorner \mathcal{L}^A_X F-[\omega\llcorner F,\eta]_{\mathfrak{g}}= \mathcal{L}^A_X \omega\llcorner F-[\omega\llcorner F,\eta]_{\mathfrak{g}}=0
   \ee
   as $\omega\llcorner F$ is constant that commutes with the other elements of $\mathfrak{g}$. We have also used that $X$ is holomorphic and Killing and so
   $\mathcal{L}_X\omega=0$. This establishes the statement.
  \end{proof}

For $a_X=\ii_X F-d_A\eta$ to be the horizontal lift of a vector field $\alpha_X$ on the moduli space $\mathscr{M}^*_\HE$, it has to satisfy the horizontality condition as well. This can be achieved by imposing a condition on $\eta$ as that  so far has remained unrestricted.

\begin{lemma}\label{le:hor}
Let $(M^{2n}, g, \omega)$ be a Hermitian manifold with a Killing and holomorphic vector field $X$, $\mathcal{O}$ be an invertible operator and $F$ be the curvature of an Einstein-Hermitian connection $A$.  The vector field $a_X=\ii_X F-d_A\eta$ is tangent to $\mathscr{A}^*_\HE(M^{2n})$ and horizontal provided that
\be
\mathcal{O}\eta=-F_{ij}\h\nabla^i X^j~.
\label{oeta}
\ee
\end{lemma}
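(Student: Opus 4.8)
The plan is to reduce the horizontality requirement directly to the stated equation for $\eta$ and then to identify its right-hand side, the only genuinely new ingredient being a divergence identity for the Hermitian--Einstein curvature. Tangency of $a_X$ to $\mathscr{A}^*_\HE$ is already supplied by Lemma~\ref{le:one} (this is where $X$ being Killing and holomorphic is used), so it remains to impose the horizontality condition (\ref{hor}), namely $D_A^i (a_X)_i+\theta^i (a_X)_i=0$. Writing $(a_X)_i=X^j F_{ji}-D_{Ai}\eta$ and noting that the $\eta$-part contributes exactly $-\big(D_A^iD_{Ai}+\theta^i D_{Ai}\big)\eta=-\mathcal{O}\eta$, the condition becomes
\be
\mathcal{O}\eta=D_A^i\big(X^j F_{ji}\big)+\theta^i X^j F_{ji}~.
\ee
Thus I must show that the right-hand side equals $-F_{ij}\h\nabla^i X^j$, which by invertibility of $\mathcal{O}$ then determines $\eta$ uniquely.

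First I would split $D_A^i(X^j F_{ji})=(D^i X^j)F_{ji}+X^j D_A^i F_{ji}$. Since $F$ is antisymmetric, $(D^i X^j)F_{ji}=-F_{ij}D^i X^j$, which is precisely the Levi--Civita part of $-F_{ij}\h\nabla^i X^j$ (note that this uses only antisymmetry, not that $X$ is Killing). Inserting $\h\nabla_i X^j=D_i X^j+\tfrac12 H^j{}_{ik}X^k$, the whole claim reduces, after contracting with $X^j$, to the purely curvature-theoretic identity
\be
D_A^i F_{ji}+\theta^i F_{ji}=-\tfrac12\, F_{im}\,H^{mi}{}_j~,
\label{eq:hdiv}
\ee
since its torsion term, contracted with $X^j$, reconstructs the remaining piece of $-F_{ij}\h\nabla^i X^j$ and hence (\ref{oeta}).

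The substance of the proof is establishing (\ref{eq:hdiv}). The tool is the second Bianchi identity $d_A F=0$: contracting $D_{A[i}F_{jk]}=0$ with $\omega^{jk}$ and using that the $\omega$-trace $\omega^{jk}F_{jk}=2\lambda$ is constant and central (so its $D_A$ vanishes) gives $2\,\omega^{jk}D_{Aj}F_{ki}=(D_i\omega^{jk})F_{jk}$. I would then convert $\omega^{jk}D_{Aj}F_{ki}$ into the divergence $D_A^j F_{ji}$ by writing $\omega^{jk}=-I^j{}_l g^{lk}$, commuting $I$ through $D_{Aj}$, and using that $F$, viewed as an endomorphism, commutes with $I$ because it is a $(1,1)$-form. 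Commuting $I$ past the derivative produces two inhomogeneous contributions: one proportional to $D_j I^j{}_l=-I^k{}_l\theta_k$ (the divergence of $I$ is the Lee form, yielding the $\theta^i F_{ji}$ term), and one proportional to $D_j I^l{}_i$, which is pure torsion because $\h\nabla I=0$. Organising these together with the right-hand side $(D_i\omega^{jk})F_{jk}$, itself expressible through $\theta$ and $H=-\ii_I d\omega$, should assemble into (\ref{eq:hdiv}). As a consistency check, in the K\"ahler case $\theta=0$ and $H=0$, and (\ref{eq:hdiv}) collapses to the classical statement that a Hermitian--Einstein connection is Yang--Mills, $D_A^i F_{ji}=0$.

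The main obstacle is thus the bookkeeping in (\ref{eq:hdiv}): carefully tracking the non-parallelism of $I$ and $\omega$ under the Levi--Civita connection $D$ and checking that the Lee-form and torsion contributions combine with exactly the coefficients dictated by the splitting $\h\nabla=D+\tfrac12 H$. Once (\ref{eq:hdiv}) is in hand, the right-hand side of the reduced $\eta$-equation equals $-F_{ij}\h\nabla^i X^j$, and invertibility of $\mathcal{O}$ (assumed in the hypotheses) produces a unique $\eta$ with $\mathcal{O}\eta=-F_{ij}\h\nabla^i X^j$. For this $\eta$ the vector $a_X=\ii_X F-d_A\eta$ is simultaneously tangent to $\mathscr{A}^*_\HE$ (Lemma~\ref{le:one}) and horizontal, which is the assertion.
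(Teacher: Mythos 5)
Your proposal is correct and follows essentially the same route as the paper: tangency from Lemma \ref{le:one}, reduction of horizontality to $\mathcal{O}\eta=D_A^i(X^jF_{ji})+\theta^i X^jF_{ji}$, and the key divergence identity for $F$ --- your $D_A^iF_{ji}+\theta^iF_{ji}=-\tfrac12 F_{im}H^{mi}{}_j$ is exactly the paper's $\h\nabla_A^iF_{ij}+\theta^iF_{ij}=0$ rewritten via $\h\nabla=D+\tfrac12 H$ (the torsion trace dropping out of the divergence). The only difference is that you sketch the derivation of this identity from the Bianchi identity and the Hermitian--Einstein condition, a step the paper asserts without proof, and your sketch is the standard and correct one.
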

\begin{proof}
We have already demonstrated in the previous lemma that $a_X$ is tangent to $\mathscr{A}^*_\HE(M^{2n})$. Assuming that $a_X$ is horizontal, $a_X=a^h_X$, we impose the horizontality condition on $a_X$ to find that
\be
D^i_A\ii_X F_i+\theta^i \ii_X F_i-\mathcal{O}\eta=0~.
\label{hhcon}
\ee
However, the Bianchi identity for $F$ and the Hermitian-Einstein condition on $F$ imply that
\be
\h\nabla_A^iF_{ij}+\theta^i F_{ij}=0~.
\ee
Moreover
\be
D^i_A\ii_X F_i= \h\nabla_A^i(X^j F_{ji})=\h\nabla^i X^j F_{ji}+\nabla_A^i F_{ji} X^j=-\h\nabla^i X^j F_{ij}-F_{ji} X^j \theta^i
\ee
Substituting this in (\ref{hhcon}), one finds (\ref{oeta}).  As $\mathcal{O}$ is invertible, there is always an $\eta$ given $X$ and $F$ such that $a_X$ satisfies the horizontality   condition.  Therefore, $a_X$ is the horizontal lift of a vector field $\alpha_X$ on the moduli space $\mathscr{M}^*_\HE(M^{2n})$.
\end{proof}

\begin{remark}
It is clear from the previous lemma that if $X$ is in addition $\h\nabla$-covariantly constant, then $\eta=0$ and $\ii_X F$ is horizontal. This reproduces the result that has already been established in \cite{gp}. Note also that $\mathcal{O}$ is an invertible operator provided that $M^{2n}$ is compact and $g$ either satisfies the Gauduchon condition or  satisfies the condition
$D^i(e^{-2\Phi} K^\flat_i)=0$ with $K^\flat=\theta+2d\Phi$.
\end{remark}

\subsection{Holomorphic and Killing vector fields on the moduli space}

To proceed with the investigation of symmetries of the moduli space $\mathscr{M}^*_\HE(M^{2n})$, we have to evaluate the components of the curvature $\Theta$ associated with the horizontality condition (\ref{hor}) along the vector field $\alpha_X$ of the moduli space. Recall that $\Theta$ is uniquely  defined by the equation
(\ref{diftheta}) as the operator ${\mathcal O}$ is invertible.  We shall use this to prove the following key lemma.

\begin{lemma}\label{le:main}
Let $X$ be holomorphic and Killing vector field on $M^{2n}$, $\mathcal{O}$ be an invertible operator and $F$ be the curvature of an Einstein-Hermitian connection. Consider $a_X^h=\ii_X F-d_A\eta$ with $\mathcal{O}\eta=-F_{ij}\h\nabla^i X^j$, then
\be
\Theta(a^h, a_X^h)=-\Theta(a_X^h, a^h)=\ii_X a^h+ a^h\cdot \eta~,
\label{thaax}
 \ee
 for any other horizontal vector field $a^h$.
 \end{lemma}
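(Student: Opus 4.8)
The first equality $\Theta(a^h, a_X^h) = -\Theta(a_X^h, a^h)$ is immediate: interchanging the two arguments in the defining relation (\ref{diftheta}) and relabelling the contracted indices gives $2g^{ij}[a^h_i,(a_X^h)_j]_{\mathfrak g}\mapsto 2g^{ij}[(a_X^h)_i,a^h_j]_{\mathfrak g}=-2g^{ij}[a^h_i,(a_X^h)_j]_{\mathfrak g}$, so $\Theta$ is antisymmetric. For the substantive second equality I would use that, because $\mathcal O$ is invertible under the standing hypothesis, $\Theta(a^h,a_X^h)$ is the \emph{unique} section of $P\times_{\Adj}\mathfrak g$ satisfying (\ref{diftheta}). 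It therefore suffices to verify that the candidate $\xi\equiv\ii_X a^h+a^h\cdot\eta$ obeys
\[
\mathcal O\,\xi = 2g^{ij}[a^h_i,(a^h_X)_j]_{\mathfrak g}~,\qquad (a^h_X)_j=X^iF_{ij}-D_{Aj}\eta~.
\]

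The first move is to compute $\mathcal O(a^h\cdot\eta)$ by differentiating the defining relation $\mathcal O\eta=-F_{ij}\h\nabla^iX^j$ of Lemma \ref{le:hor} along $a^h$. Since $\h\nabla X$ is $A$-independent and $a^h\cdot F=d_A a^h$, the Leibniz rule yields $\mathcal O(a^h\cdot\eta)=-(d_A a^h)_{ij}\h\nabla^iX^j-(a^h\cdot\mathcal O)\eta$, where $(a^h\cdot\mathcal O)$ is the variation of the coefficients of $\mathcal O$. Varying $D_{Ak}\eta\mapsto[a^h_k,\eta]$ and using the horizontality $D^i_A a^h_i+\theta^i a^h_i=0$ of (\ref{hor}) to cancel the $\theta$-dependent pieces collapses this to $(a^h\cdot\mathcal O)\eta=2g^{ij}[a^h_i,D_{Aj}\eta]_{\mathfrak g}$. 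The resulting term $-2g^{ij}[a^h_i,D_{Aj}\eta]_{\mathfrak g}$ precisely cancels the $-D_{Aj}\eta$ contribution on the right-hand side, so the claim reduces to the purely curvature-valued identity
\[
\mathcal O(\ii_X a^h)-(d_A a^h)_{ij}\h\nabla^iX^j=2X^k g^{ij}[a^h_i,F_{kj}]_{\mathfrak g}~.
\]

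The second move is to commute $\mathcal O$ past the contraction $\ii_X a^h=X^i a^h_i$; this produces $X^i(\mathcal O a^h)_i$ together with inhomogeneous terms in $D^2X$, $DX$ and $\theta\,DX$. The heart of the lemma — and the step I expect to be the main obstacle — is to show that $X^i(\mathcal O a^h)_i$ combined with these terms reorganises into the single gauge-curvature commutator on the right. For this I would apply a Weitzenb\"ock-type rearrangement to the rough Laplacian $(\mathcal O a^h)_i$, writing $D_A^jD_{Aj}a^h_i$ through $d_A a^h$ and $D_A^j a^h_j$ and commuting covariant derivatives; the commutators generate the Riemann tensor on the form index and the gauge curvature $F$ on the $\mathfrak g$ index. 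One then feeds in: the horizontality and tangency conditions (\ref{tangentcon}), (\ref{hor}) on $a^h$ (so $D^j_A a^h_j=-\theta^j a^h_j$ and $d_A a^h$ is a traceless $(1,1)$-form); the Hermitian--Einstein and Bianchi data on $F$ in the form $\h\nabla_A^iF_{ij}+\theta^iF_{ij}=0$; and the Killing and holomorphic identities for $X$, namely $D_{(i}X_{j)}=0$, the second-order Killing identity expressing $D^2X$ and $DDX$ through the Riemann tensor, and $\mathcal L_X I=0$. The expectation is that every Riemann-curvature term pairs against the $D^2X$ and $DX$ terms and cancels — the Killing/holomorphic structure of $X$ being exactly what makes this possible — while the conversions between $D$ and $\h\nabla$ (controlled by the torsion $H=-\ii_I d\omega$ and the Lee form $\theta$) account for the remaining $\theta$-pieces, leaving precisely $2X^k g^{ij}[a^h_i,F_{kj}]_{\mathfrak g}$. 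Matching this completes the verification, and invertibility of $\mathcal O$ then forces $\Theta(a^h,a_X^h)=\xi$.

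As a guiding picture and sign check I would keep in mind that $\Theta(a^h,a_X^h)$ is the section generating the vertical part of the bracket $\lsq a^h,a^h_X\rsq$, and that $\ii_X a^h+a^h\cdot\eta$ is recognisable as the generator of that vertical component — consistent with $\alpha_X$ being the lift of the $X$-action on $M^{2n}$. I would nonetheless carry out the argument through the explicit $\mathcal O$-verification above rather than through the bracket, since that route makes the role of each hypothesis (invertibility of $\mathcal O$, horizontality of $a^h$, Hermitian--Einstein on $F$, Killing and holomorphic on $X$) transparent and avoids the subtlety of extracting vertical projections of brackets of $A$-dependent vector fields.
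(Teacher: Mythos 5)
Your proposal is correct and follows essentially the paper's own route: the paper likewise pins down $\Theta(a^h,a^h_X)$ through the defining equation (\ref{diftheta}) and the invertibility of $\mathcal{O}$, by differentiating the horizontality relation (\ref{horcon2}) for $a^h_X$ (equivalently $\mathcal{O}\eta=-F_{ij}\h\nabla^iX^j$) along $a^h$, and your variation formula $(a^h\cdot\mathcal{O})\eta=2g^{ij}[a^h_i,D_{Aj}\eta]_{\mathfrak{g}}$ after horizontality kills $[D_A^ia^h_i+\theta^ia^h_i,\eta]_{\mathfrak{g}}$ is precisely the paper's steps (\ref{heta})--(\ref{heta1}). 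The one step you defer as an ``expectation'' --- the reduced identity $\mathcal{O}(\ii_X a^h)-(d_Aa^h)_{ij}\h\nabla^iX^j=2X^kg^{ij}[a^h_i,F_{kj}]_{\mathfrak{g}}$, whose signs match the paper's --- is exactly the content of the computation (\ref{com1})--(\ref{hconxf2}) and closes just as you anticipate: commuting the gauge-covariant derivatives produces $X^iR_{ij}a^{hj}+[a^{hj},F_{ij}X^i]_{\mathfrak{g}}$, the Ricci term cancels against $a^h_iD^2X^i=-R_{ij}a^{hi}X^j$ from the Killing identity, and the $\theta$-pieces are absorbed using $\mathcal{L}_X\theta=0$, which holds since $\theta$ is built from $g$ and $I$ and $X$ is holomorphic and Killing.
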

\begin{proof}
 As $a_X^h$ satisfies the horizontality condition, we have that
 \be
 D_A^i (\ii_X F-d_A\eta)_i+ \theta^i (\ii_X F-d_A\eta)_i=0~.
 \label{horcon2}
 \ee
 To continue, we shall evaluate the directional derivative of the horizontality condition above along the tangent vector $a$.  For this, we consider first
 \be
a\cdot (D_A^i \ii_X F_i+ \theta^i\ii_X F_i)= [a^j, X^i F_{ij}]_{\mathfrak{g}}+ D_A^j(X^i (d_A a)_{ij})+ \theta^j (d_A a)_{ij} X^i~.
\label{com1}
\ee
Next, we notice that
\be
\theta^j (d_A a)_{ij} X^i= X^i D_{Ai} (\theta^j a_j)-\theta^j D_{Aj}(a_i X^i)~,
\ee
where we have used that $\mathcal{L}_X\theta=0$ as $\theta$ depends on the metric and complex structure of $M^{2n}$ and $X$ is holomorphic and Killing.

Then, we find that
\begin{align}
D_A^j(X^i (d_A a)_{ij})&=X^i D_{Aj} D_{Ai} a^j- D_A^2 (a_i X^i)+ a_i D^2 X^i
\cr
&
=  X^i D_{Ai} D_{Aj} a^j +X^i R_{ij} a^j+ [a^j, F_{ij} X^i]_{\mathfrak{g}}- D_A^2 (a_i X^i)- a^i R_{ij} X^j
\cr
&
= X^i D_{Ai} D_{Aj} a^j+[a^j, F_{ij} X^i]_{\mathfrak{g}}-  D_A^2 (a_i X^i)~,
\end{align}
where we have used that $X$ is Killing and so $D^2 X_i= -R_{ij} X^j$. Putting these terms in (\ref{com1}), we find that
\be
a\cdot (D_A^i \ii_X F_i+ \theta^i\ii_X F_i)=-D_A^2 (a_i X^i)-\theta^j D_{Aj} (a_i X^i)+ 2 [a^j, X^i F_{ij}]_{\mathfrak{g}}+ X^i D_{Ai} (D_{Aj} a^j+\theta^j a_j)~.
\label{hconxf}
\ee
If the tangent vector $a$ satisfies the horizontality condition, $D_{Aj} a^j+\theta^j a_j=0$,  then
\be
a^h\cdot (D_A^i \ii_X F_i+ \theta^i\ii_X F_i)=-D_A^2 (a^h_i X^i)-\theta^j D_{Aj} (a^h_i X^i)+ 2 [a^{hj}, X^i F_{ij}]_{\mathfrak{g}}~.
\label{hconxf2}
\ee
Moreover, we also have
\be
 -a\cdot (D_A^2 \eta+ \theta^i D_{Ai}\eta)=- D_A^2 (a\cdot \eta)- \theta^i D_{Ai} (a\cdot \eta)-2 [a^i, D_{Ai}\eta]_{\mathfrak{g}}-[D_A^ia_i+\theta^i a_i, \eta]_{\mathfrak{g}}~.
 \label{heta}
 \ee
 Imposing the horizontality condition on $a$, we deduce that
 \be
 -a^h\cdot (D_A^2 \eta+ \theta^i D_{Ai}\eta)=- D_A^2 (a^h\cdot \eta)- \theta^i D_{Ai} (a^h\cdot \eta)-2 [a^{hi}, D_{Ai}\eta]_{\mathfrak{g}}~.
 \label{heta1}
 \ee

 To evaluate the directional derivative of (\ref{horcon2}) along the horizontal vector field $a^h$, it remains to add (\ref{hconxf2}) and (\ref{heta1}) and set the result to zero.  This yields
 \be
 \mathcal{O} (\ii_{ X}  a^h+  a^h\cdot \eta)= 2[ a^{hi},  X^jF_{ji}- D_{Ai}\eta]_{\mathfrak{g}}=2[a^{hi},  a^h_{{ X}i}]_{\mathfrak{g}}~.
 \ee
Comparing this to the equation (\ref{diftheta}) that determines $\Theta$, we  arrive at  (\ref{thaax}) as the operator $\mathcal{O}$ is invertible. This is always the case provided that $M^{2n}$ is compact and either $g$ is Gauduchon or satisfies $D^i(e^{-2\Phi} K^\flat_i)=0$ with $K^\flat=\theta+2d\Phi$.
\end{proof}

\begin{theorem}\label{th:one}
Let  $X$ be a holomorphic and Killing vector field on the compact Hermitian manifold $(M^{2n}, g, \omega)$ and  $g$ be the Gauduchon metric. Then, $\alpha_X$ is a holomorphic and Killing vector field on $\mathscr{M}^*_{\HE}$.
\end{theorem}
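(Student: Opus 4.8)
The plan is to show $\alpha_X$ satisfies both $\mathcal{L}_{\alpha_X}\mathcal{I}=0$ (holomorphic) and $\mathcal{L}_{\alpha_X}(\cdot,\cdot)_{\mathscr{M}^*_\HE}=0$ (Killing) by working with horizontal lifts and reducing every statement on $\mathscr{M}^*_\HE$ to a property of $X$ on $M^{2n}$. Recall from Lemmas \ref{le:one} and \ref{le:hor} that $\alpha_X$ has horizontal lift $a_X^h=\ii_X F-d_A\eta$ with $\mathcal{O}\eta=-F_{ij}\h\nabla^i X^j$, which is well defined since $\mathcal{O}$ is invertible in the Gauduchon gauge. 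For a vector field $\alpha$ on $\mathscr{M}^*_\HE$ with horizontal lift $b=a^h$, the horizontal lift of the moduli-space bracket $\lsq\alpha_X,\alpha\rsq$ is the horizontal part $\lsq a_X^h,b\rsq^h$ of the bracket of the lifts, the vertical part being fixed by $\lsq a_1^h,a_2^h\rsq^v=-d_A\Theta(a_1^h,a_2^h)$.

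The central step, and the main obstacle, is to establish the master identity
\[
a_X^h\cdot b=\lsq a_X^h,b\rsq^h+\mathcal{L}^A_X b-[b,\eta]_{\mathfrak{g}},\qquad \mathcal{L}^A_X=\ii_X d_A+d_A\ii_X,
\]
valid for every horizontal lift $b=a^h$. I would obtain it by computing the directional derivative $b\cdot a_X^h$ directly: using $b\cdot F=d_A b$ and $b\cdot(d_A\eta)=d_A(b\cdot\eta)+[b,\eta]_{\mathfrak{g}}$ gives
\[
b\cdot a_X^h=\mathcal{L}^A_X b-d_A\big(\ii_X b+b\cdot\eta\big)-[b,\eta]_{\mathfrak{g}},
\]
and then Lemma \ref{le:main}, in the form $\Theta(b,a_X^h)=\ii_X b+b\cdot\eta$, identifies the $d_A$-exact term as $-d_A\Theta(b,a_X^h)=\lsq b,a_X^h\rsq^v$. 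Writing $a_X^h\cdot b=\lsq a_X^h,b\rsq+b\cdot a_X^h$ and separating horizontal from vertical parts, the vertical pieces cancel and the displayed identity remains. This is where the content of Lemma \ref{le:main} is consumed; everything afterward is bookkeeping plus two clean cancellations.

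For holomorphicity I would apply the master identity to $b=\mathcal{I}(a^h)=-\ii_I a^h$, which is again a horizontal lift since the horizontality condition (\ref{hor}) is $\mathcal{I}$-invariant. Because $\mathcal{I}=-\ii_I$ is a fixed tensor on $M^{2n}$ it commutes with $a_X^h\cdot$ and with $[\,\cdot\,,\eta]_{\mathfrak{g}}$, while holomorphicity of $X$, i.e. $\mathcal{L}_X I=0$, gives $\mathcal{L}_X\ii_I=\ii_I\mathcal{L}_X$ and hence $\mathcal{L}^A_X\mathcal{I}(a^h)=\mathcal{I}(\mathcal{L}^A_X a^h)$. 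Feeding these into the master identity yields $\lsq a_X^h,\mathcal{I}(a^h)\rsq^h=\mathcal{I}\lsq a_X^h,a^h\rsq^h$, that is $\mathcal{L}_{\alpha_X}\mathcal{I}=0$.

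For the Killing property I would write
\[
(\mathcal{L}_{\alpha_X}g_{\mathscr{M}^*_\HE})(\alpha_1,\alpha_2)=a_X^h\cdot(a_1^h,a_2^h)_{\mathscr{A}^*_\HE}-(\lsq a_X^h,a_1^h\rsq^h,a_2^h)_{\mathscr{A}^*_\HE}-(a_1^h,\lsq a_X^h,a_2^h\rsq^h)_{\mathscr{A}^*_\HE}
\]
and substitute the master identity, so that the moduli-space bracket terms cancel, leaving $(\mathcal{L}^A_X a_1^h-[a_1^h,\eta]_{\mathfrak{g}},a_2^h)_{\mathscr{A}^*_\HE}+(a_1^h,\mathcal{L}^A_X a_2^h-[a_2^h,\eta]_{\mathfrak{g}})_{\mathscr{A}^*_\HE}$. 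The $\eta$ terms vanish by $\mathrm{ad}$-invariance of $\langle\cdot,\cdot\rangle_{\mathfrak{g}}$; writing $\mathcal{L}^A_X a=\mathcal{L}_X a+[\ii_X A,a]_{\mathfrak{g}}$, the $[\ii_X A,\cdot]_{\mathfrak{g}}$ terms cancel for the same reason; and the remaining ordinary Lie-derivative terms combine into $\int_{M^{2n}}\sqrt g\,\mathcal{L}_X\big(g^{ij}\langle a_{1i}^h,a_{2j}^h\rangle_{\mathfrak{g}}\big)=\int_{M^{2n}}\sqrt g\,D_i\big(X^i\,g^{jk}\langle a_{1j}^h,a_{2k}^h\rangle_{\mathfrak{g}}\big)=0$, using $\mathcal{L}_X g=0$ and $D_iX^i=0$ for the Killing field $X$ together with compactness of $M^{2n}$. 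Hence $\mathcal{L}_{\alpha_X}g_{\mathscr{M}^*_\HE}=0$, completing the proof.
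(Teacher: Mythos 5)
Your proof is correct, and it splits into two halves of different character relative to the paper. Your ``master identity'' $a_X^h\cdot b=\lsq a_X^h,b\rsq^h+\mathcal{L}^A_X b-[b,\eta]_{\mathfrak{g}}$ is exactly the content that the paper's proof uses inline: the paper writes $(\ii_XF-d_A\eta)\cdot a^h=\lsq \ii_XF-d_A\eta,a^h\rsq+\ii_Xd_Aa^h-[a^h,\eta]_{\mathfrak{g}}-d_A(a^h\cdot\eta)$ and then trades the vertical part of the bracket for $-d_A\Theta(a_X^h,a^h)$ via Lemma \ref{le:main}, just as you do; so your Killing computation, including the two cancellations by $\mathrm{ad}$-invariance of $\langle\cdot,\cdot\rangle_{\mathfrak{g}}$ and the final divergence argument using $D_iX^i=0$ and compactness, is essentially the paper's proof of the Killing property, merely packaged once and reused. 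Where you genuinely diverge is holomorphicity: the paper does \emph{not} prove $\mathcal{L}_{\alpha_X}\mathcal{I}=0$ directly; instead it first computes $d\Omega_{\mathscr{M}^*_{\HE}}$ in terms of $\Theta$ and $d\omega^{n-1}$, then applies the Cartan formula $\mathcal{L}_{\alpha_X}\Omega=\ii_{\alpha_X}d\Omega+d\,\ii_{\alpha_X}\Omega$ together with Lemma \ref{le:main} to show $\mathcal{L}_{\alpha_X}\Omega_{\mathscr{M}^*_{\HE}}=0$, and concludes holomorphicity from invariance of both $g_{\mathscr{M}^*_{\HE}}$ and $\Omega_{\mathscr{M}^*_{\HE}}$. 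Your route --- applying the master identity to $b=\mathcal{I}(a^h)$ and using that the $A$-independent operator $\mathcal{I}=-\ii_I$ commutes with the directional derivative, with $[\,\cdot\,,\eta]_{\mathfrak{g}}$, and with $\mathcal{L}^A_X=\mathcal{L}_X+[\ii_XA,\cdot]_{\mathfrak{g}}$ when $\mathcal{L}_XI=0$ --- yields $\lsq a_X^h,\mathcal{I}(a^h)\rsq^h=\mathcal{I}\lsq a_X^h,a^h\rsq^h$ directly, avoiding the $d\Omega$ computation and any integration in this half; it is more elementary and arguably cleaner. It does quietly rely on the fact that tangency plus horizontality are jointly $\mathcal{I}$-invariant, so that $\mathcal{I}(a^h)$ really is the horizontal lift of $\mathcal{I}\alpha$ --- this includes not only the $\omega$-trace condition (which is what the gauge-fixing (\ref{hor}) was designed for, as you note) but also the preservation of the $(1,1)$ condition on $d_A a$, a small check worth making explicit; both are part of the paper's construction of the complex structure on $\mathscr{M}^*_{\HE}$, so the appeal is legitimate. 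The trade-off: the paper's route recycles the $d\Omega_{\mathscr{M}^*_{\HE}}$ formula that it needs anyway for the strong KT theorem, while yours isolates the bracket identity as the single analytic input and makes the role of $\mathcal{L}_XI=0$ completely transparent.
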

\begin{proof}

Let us begin with the proof of the statement that $\alpha_X$ is Killing. To do this, we use the formula
\begin{align}
\mathcal{L}_{\alpha_{{}_X}} ( \alpha_1,& \alpha_2)_{{\mathscr{M}}^*_{\HE}}=\alpha_{{}_X} \cdot ( \alpha_1, \alpha_2)_{{\mathscr{M}}^*_{\HE}}-(\lsq \alpha_{{}_X}, \alpha_1\rsq, \alpha_2)_{{\mathscr{M}}^*_{\HE}}-( \alpha_1, \lsq \alpha_{{}_X}, \alpha_2\rsq)_{{\mathscr{M}}^*_{\HE}}
\cr
&
=a_X^h \cdot ( a^h_1, a^h_2)_{{\mathscr{A}}^*_{\HE}}-(\lsq a_X^h, a^h_1\rsq^h, a^h_2)_{{\mathscr{A}}^*_{\HE}}-( a^h_1, \lsq a_X^h, a^h_2\rsq^h)_{{\mathscr{A}}^*_{\HE}}~,
\end{align}
for the Lie derivative of the metric $( \cdot, \cdot)_{{\mathscr{M}}^*_{\HE}}$ on the moduli space $\mathscr{M}^*_{\HE}$ -- the first line is the standard formula of the Lie derivative of a $(0,2)$-tensor along a vector field.

Using that $a_X^h=\ii_XF-d_A\eta$, which follows  from the results of Lemma \ref{le:hor}, and the definition of the inner product in (\ref{innerform}), the above equation gives
\begin{align}
\mathcal{L}_{\alpha_{{}_X}} ( \alpha_1\,,&\, \alpha_2)_{{\mathscr{M}}^*_{\HE}}=\int_{M^{2n}} d^{2n}x \sqrt{g} g^{-1} \Big((\ii_X F-d_A\eta)\cdot\langle a_1^h\,,\, a_2^h\rangle_{\mathfrak{g}}-\langle\lsq \ii_XF-d_A\eta, a_1^h\rsq^h\,,\, a_2^h\rangle_{\mathfrak{g}}
\cr
& \qquad\qquad \qquad
-\langle a_1^h, \lsq\ii_XF-d_A\eta \,,\, a_2^h\rsq^h\rangle_{\mathfrak{g}}\Big)
\cr
&
=\int_{M^{2n}} d^{2n}x \sqrt{g} g^{-1} \Big(\langle(\ii_X F-d_A\eta)\cdot a_1^h\,,\, a_2^h\rangle_{\mathfrak{g}}+\langle a_1^h\,,\, (\ii_X F-d_A\eta)\cdot a_2^h\rangle_{\mathfrak{g}}
\cr
&\qquad\qquad-\langle\lsq \ii_XF-d_A\eta, a_1^h\rsq^h\,,\, a_2^h\rangle_{\mathfrak{g}}-
\langle a_1^h, \lsq\ii_XF-d_A\eta, a_2^h\rsq^h\rangle_{\mathfrak{g}}\Big)
\cr
&
=\int_{M^{2n}} d^{2n}x \sqrt{g} g^{-1} \Big(\langle\lsq\ii_X F-d_A\eta\,,\, a_1^h\rsq+ d_A \ii_Xa_1^h-d_A (a_1^h\cdot \eta)\,,\, a_2^h\rangle_{\mathfrak{g}}
\cr
&\qquad\qquad
+\langle a_1^h\,,\, \lsq\ii_X F-d_A\eta\,,\, a_2^h\rsq+ d_A \ii_Xa_2^h-d_A (a_2^h\cdot \eta)\rangle_{\mathfrak{g}}
\cr
&\qquad\qquad
-\langle\lsq \ii_X F-d_A\eta\,,\, a_1^h\rsq+d_A \Theta(\ii_XF-d_A\eta, a_1^h)\,,\, a_2^h\rangle_{\mathfrak{g}}
\cr
&\qquad\qquad
-
\langle a_1^h\,,\, \lsq \ii_XF-d_A\eta, a_2^h\rsq+d_A \Theta(\ii_XF-d_A\eta, a_2^h)\rangle_{\mathfrak{g}}\Big)
\cr
&
=\int_{M^{2n}} d^{2n}x \sqrt{g} g^{-1} \Big(\langle (\ii_X d_A+d_A \ii_X) a_1^h\,,\, a_2^h\rangle_{\mathfrak{g}}+\langle a_1^h\,,\,  (\ii_X d_A+d_A \ii_X) a_2^h\rangle_{\mathfrak{g}}\Big)
\cr
&
=\int_{M^{2n}} d^{2n}x \sqrt{g} g^{-1}  \mathcal{L}_X \langle a_1^h\,,\, a_2^h\rangle_{\mathfrak{g}}=0~,
\label{kproof}
\end{align}
as $X$ is a Killing vector field, where we have used the result of the main Lemma \ref{le:main} and in particular (\ref{thaax}). Furthermore to perform the above calculation, we have used that
\begin{align}
(\ii_X F-d_A\eta)\cdot a_1^h&=\lsq \ii_X F-d_A\eta, a_1^h\rsq+a_1^h\cdot (\ii_X F-d_A\eta)
\cr
&=\lsq \ii_X F-d_A\eta, a_1^h\rsq+\ii_X d_A a_1^h- [a_1^h, \eta]_{\mathfrak{g}}-d_A (a_1^h \cdot\eta)
\end{align}
and similarly for $(\ii_X F-d_A\eta)\cdot a_2^h$.  The terms involving $[a_1^h, \eta]_{\mathfrak{g}}$ and $[a_2^h, \eta]_{\mathfrak{g}}$ cancel in
(\ref{kproof}) using that $\langle \cdot, \cdot \rangle_{\mathfrak{g}}$ is bi-invariant.
It is clear from (\ref{kproof}) that $\alpha_{{}_X}$ is a Killing vector field on the moduli space ${\mathscr{M}}^*_{\HE}$.

Next, we shall demonstrate that $\alpha_X$ is holomorphic. As $\alpha_X$ is Killing, it suffices to prove that the Hermitian form $\Omega_{\mathscr{M}^*_{\HE}}$ is invariant under the action of $\alpha_X$. For this, let us first calculate the exterior derivative of $\Omega_{\mathscr{M}^*_{\HE}}$.  This has already been evaluated in \cite{lubke}, see also \cite{gp},  but we state here the computation for completeness. Using that $\Omega_{\mathscr{A}^*_{\HE}}$ is closed 2-form as it is the restriction of the Hermitian form $\Omega_{\mathscr{A}}$ onto ${\mathscr{A}_{\HE}}$, we deploy the standard expression for the exterior derivative of a 2-form to find
\begin{align}
d\Omega_{\mathcal{M}^*_{\HE}}&(\alpha_1, \alpha_2, \alpha_3)=\Big(\alpha_1\cdot \Omega_{\mathcal{M}^*_{\HE}}(\alpha_2, \alpha_3)-\Omega_{\mathcal{M}^*_{\HE}}(\lsq \alpha_1, \alpha_2\rsq, \alpha_3)\Big)+\mathrm{cyclic~ in~} (\alpha_1, \alpha_2, \alpha_3)
 \cr
 &
 =\Big(a^h_1 \cdot \Omega_{\mathcal{A}^*_{\HE}}(a_2^h, a_3^h)-\Omega_{\mathcal{A}^*_{\HE}}(\lsq a^h_1, a_2^h\rsq^h, a_3^h)\Big)+ \mathrm{cyclic~ in~} (a^h_1, a^h_2, a^h_3)
\cr
&=\Omega_{\mathcal{A}^*_{\HE}}(\lsq a^h_1, a_2^h\rsq^v, a_3^h)+ \mathrm{cyclic~ in~} (a^h_1, a^h_2, a^h_3)
\cr
&=\frac{1}{(n-1)!} \,\int_{M^{2n}} \omega^{n-1}\wedge \langle \lsq a^h_1, a_2^h\rsq^v\wedge  a^h_3\rangle_{\mathfrak{g}}+ \mathrm{cyclic~ in~} (a^h_1, a^h_2, a^h_3)
\cr
&=-\frac{1}{(n-1)!} \,\int_{M^{2n}} \omega^{n-1}\wedge \langle d_A\Theta (a_1^h, a_2^h) \wedge  a^h_3\rangle_{\mathfrak{g}}+ \mathrm{cyclic~ in~} (a^h_1, a^h_2, a^h_3)
\cr
&=\frac{1}{(n-1)!} \,\Big(\int_{M^{2n}} d\omega^{n-1}\wedge \langle \Theta (a_1^h, a_2^h),   a^h_3\rangle_{\mathfrak{g}}
\cr &\qquad\qquad+  \,\int_{M^{2n}} \omega^{n-1}\wedge \langle \Theta (a_1^h, a_2^h),   d_Aa^h_3\rangle_{\mathfrak{g}}\Big) + \mathrm{cyclic~ in~} (a^h_1, a^h_2, a^h_3)
\cr
&=\frac{1}{(n-1)!} \,\int_{M^{2n}} d\omega^{n-1}\wedge \langle \Theta (a_1^h, a_2^h),   a^h_3\rangle_{\mathfrak{g}} + \mathrm{cyclic~ in~} (a^h_1, a^h_2, a^h_3)
\label{domegab}
\end{align}
where $\lsq \cdot, \cdot\rsq^v$ is the vertical component of the commutator and so $\lsq \cdot, \cdot\rsq^v=-d_A \Theta(\cdot, \cdot)$. We have also used that $\omega^{n-1}\wedge d_A a_1^h=0$ as $a_1^h$ is tangent to
${\mathscr{A}_{\HE}}$ and similarly for $a_2^h$ and $a_3^h$.

To calculate the Lie derivative of $\Omega_{\mathcal{M}^*_{\HE}}$ along $\alpha_X$, we use the well-known expression of the Lie derivative of a 2-form along a vector field to find
\begin{align}
\mathcal{L}_{\alpha_{{}_X}}& \Omega(\alpha_1, \alpha_2)=\ii_{\alpha_{{}_X}} d\Omega(\alpha_1, \alpha_2)+d \ii_{\alpha_{{}_X}} \Omega(\alpha_1, \alpha_2)
\cr
&
=\frac{1}{(n-1)!} \int_{M^{2n}} \omega^{n-1}\wedge \Big(-\langle d_A\Theta (\ii_X F-d_A \eta, a_1^h)\wedge a_2^h\rangle_{\mathfrak{g}}-\langle d_A\Theta ( a_2^h, \ii_X F-d_A\eta)\wedge a_1^h\rangle_{\mathfrak{g}}
\cr
&\qquad -\langle d_A\Theta ( a_1^h, a_2^h)\wedge (\ii_X F-d_A\eta)\rangle_{\mathfrak{g}}+ a_1^h \cdot \langle (\ii_XF-d_A\eta)\wedge a_2^h\rangle_{\mathfrak{g}}
\cr
&\qquad
- a_2^h \cdot \langle (\ii_XF-d_A\eta)\wedge a_1^h\rangle_{\mathfrak{g}}-\langle (\ii_XF-d_A\eta)\wedge \lsq a_1^h, a_2^h\rsq^h \rangle_{\mathfrak{g}}\Big)
\cr
&
=\frac{1}{(n-1)!} \int_{M^{2n}} \omega^{n-1}\wedge \Big(\langle d_A(\ii_Xa_1^h+a_1^h\cdot \eta)\wedge a_2^h\rangle_{\mathfrak{g}}-\langle d_A (\ii_X a_2^h+ a_2^h\cdot\eta)\wedge a_1^h\rangle_{\mathfrak{g}}
\cr
&\qquad
-\langle d_A\Theta ( a_1^h, a_2^h)\wedge (\ii_X F-d_A\eta)\rangle_{\mathfrak{g}}+   \langle (\ii_Xd_A a_1^h-d_A(a_1^h\cdot \eta))\wedge a_2^h\rangle_{\mathfrak{g}}
\cr
&\qquad -  \langle (\ii_Xd_A a_2^h-d_A(a_2^h\cdot \eta)) \wedge a_1^h\rangle_{\mathfrak{g}}
-\langle (\ii_XF-d_A\eta)\wedge d_A\Theta(a_1^h, a_2^h)  \rangle_{\mathfrak{g}}\Big)
\cr
&
=\frac{1}{(n-1)!} \int_{M^{2n}} \omega^{n-1}\wedge \big(\langle \mathcal{L}_X a_1^h\wedge a^h_2\rangle_{\mathfrak{g}}+\langle  a_1^h\wedge \mathcal{L}_X a^h_2\rangle_{\mathfrak{g}}\big)=0~,
\label{holx}
\end{align}
 because $X$ is holomorphic and Killing  and so  $\mathcal{L}_X\omega=0$. Apart from repetitively using the main Lemma \ref{le:main} and in particular (\ref{thaax}), we have also utilised the identity
\be
 a_1^h \cdot  (\ii_XF-d_A\eta)=\ii_X d_A a_1^h-d_A (a_1^h\cdot \eta)-[a_1^h, \eta]_{\mathfrak{g}}~,
\ee
 and similarly for $a_2^h \cdot  (\ii_XF-d_A\eta)$.  The terms $[a_1^h, \eta]_{\mathfrak{g}}$ and $[a_2^h, \eta]_{\mathfrak{g}}$ that arise as a result of the computation above do not appear in the calculation described in (\ref{holx}) because they cancel as a consequence of the bi-invariance of the inner product $\langle\cdot, \cdot\rangle_{\mathfrak{g}}$.
 Thus, we have shown that $\alpha_{{}_X}$ is also holomorphic and this concludes the proof of the theorem.
\end{proof}

\begin{remark}
Given two holomorphic and Killing vector fields $X$ and $Y$ on $M^{2n}$ and the associated holomorphic and Killing vector fields $\alpha_X$ and $\alpha_Y$ on $\mathcal{M}^*_{\HE}$, a short computation reveals that the commutator of the latter is
\be
\lsq \alpha_X, \alpha_Y\rsq\equiv \lsq a^h_X,  a^h_Y\rsq^h=-\alpha_{[X,Y]}~.
\label{xycom}
\ee
Therefore, the commutator of $\alpha_X$ and $\alpha_Y$ is determined in terms of that of $X$ and $Y$.

Given a holomorphic vector field $X$ on $M^{2n}$, there is a second one given by $Z=-IX$.  Moreover, $Z$ is Killing provided that $X$ is Killing and $d X^\flat$ is a $(1,1)$-form on $M^{2n}$. Furthermore, if $Z$ is holomorphic and Killing, then $[X, Z]=0$ and so $\lsq \alpha_X, \alpha_Z\rsq=0$.
\end{remark}

\begin{remark}
It has been shown in \cite{gp} that if $X$ is holomorphic and $\h\nabla$-covariantly constant -- the latter condition implies that $X$ is Killing -- and
$X^\flat\wedge \theta$ is a $(1,1)$-form, then $\alpha_X$ is $\h{\mathcal D}$-covariantly constant, where $\h{\mathcal D}$ is the compatible with the Hermitian structure connection with skew-symmetric torsion on $\mathscr{M}^*_{\HE}$.
\end{remark}

\section{Symmetries of moduli spaces  and the dilaton field}\label{sec:d}


Symmetries of the underlying manifold in the presence of a dilaton can be extended to the moduli space of Hermitian-Einstein connections. The proof of such results is similar to those that are given for metrics that obey the Gauduchon condition in the previous section.  Because of this we shall state the results and only emphasise the few differences that appear in the proof of the statements.


\vskip 0.3cm
{\bf Theorem} \ref {th:holkill} stated at the introduction.
\begin{proof}
The key observation is that both the construction of the induced the vector $\alpha_X$ on the moduli space $\mathscr{M}^*_\HE$, in particular its horizontal lift $a^h_X$ on $\mathscr{A}^*_\HE$ described in Lemma \ref{le:hor}, and the main Lemma \ref{le:main} apply unaltered in the presence of a dilaton $\Phi$ provided that (\ref{phig}), $D^i(e^{-2\Phi} K^\flat_i)=0$,  holds. As a result to prove that $\alpha_X$ is Killing, after performing the same steps in computation as those in (\ref{kproof}) but now for the metric on the moduli space $( \cdot, \cdot)_{{\mathscr{M}}^*_{\HE}}$ that depends on the dilaton $\Phi$, (\ref{Aform2}), the Lie derivative of this metric  reads
\be
\mathcal{L}_{\alpha_{{}_X}} ( \alpha_1, \alpha_2)_{{\mathscr{M}}^*_{\HE}}=\int_{M^{2n}} d^{2n}x \sqrt{g}\,\, e^{-2\Phi}\,\, g^{-1}  \mathcal{L}_X \langle a_1^h\,,\, a_2^h\rangle_{\mathfrak{g}}=0~,
\ee
where in the last step we have used that $X$ is Killing and $X\Phi=0$.

To prove that $\alpha_X$ is holomorphic,  we  perform similar steps  to those described in the computation (\ref{holx}).  As a result the Lie derivative of the Hermitian form on the moduli space, which now depends on $\Phi$,  along $\alpha_X$ is
\be
\mathcal{L}_{\alpha_{{}_X}} \Omega(\alpha_1, \alpha_2)=\frac{1}{(n-1)!} \int_{M^{2n}} \,\, e^{-2\Phi}\,\,\omega^{n-1}\wedge \mathcal{L}_X \langle  a_1^h\wedge a^h_2\rangle_{\mathfrak{g}}=0~,
\label{holx2}
\ee
where in the last step, we have used that $X\Phi=0$ and that $X$ is holomorphic and Killing and so $\mathcal{L}_X \omega=0$.  Thus we have established that $\alpha_X$ is both Killing and holomorphic that concludes the proof in the presence of a dilaton $\Phi$ \end{proof}

It remains to investigate the special case that $X$ is $\h\nabla$-covariantly constant and holomorphic.  The former conditions implies that $X$ is Killing and $dX^\flat=\ii_XH$.  Conversely, if $X$ is Killing and $dX^\flat=\ii_XH$, then $X$ is $\h\nabla$-covariantly constant.  This will be used in the theorem below to demonstrate that $\alpha_X$ is $\h{\mathcal{D}}$-covariantly constant on $\mathscr{M}^*_\HE$.   Assumming that $(M^{2n}, g, \omega)$ is a compact KT manifold with dilaton $\Phi$ such that the condition (\ref{phig}), $D^i\big(e^{-2\Phi} K^\flat_i\big)=0$ with $K^\flat=\theta+2d\Phi$,  is satisfied, one can establish the following result.

\vskip 0.3cm
{\bf Theorem} \ref{th:phihnabla} stated at the introduction.
\begin{proof}
 We have already demonstrated that $\alpha_X$ is Killing and holomorphic as a consequence of Theorem \ref{th:holkill}. It remains to show that $\alpha_X$ is $\h{\mathcal D}$-covariantly constant. For this, it suffices to prove that $d\alpha_X^\flat=\ii_{\alpha_X}\mathcal{H}$.

  To begin, it is a consequence of Lemma \ref{le:hor} and in particular (\ref{oeta}) that $a_X^h=\ii_XF$ as $\h\nabla X=0$.  To continue, we have to compute $\ii_{\alpha_X} \mathcal{H}$. Indeed,  a computation similar to that presented in \cite{gp} reveals that
\begin{align}
\mathcal {H}&(\alpha_{{}_X}, \alpha_2, \alpha_3)= \int_{M^{2n}} d^{2n}x \sqrt g\,\, e^{-2\Phi}\,\, g^{-1}\big( \langle D_{A} (\ii_X a_2^h), a_3^h\rangle_{\mathfrak{g}}-\langle D_{A} (\ii_X a_3^h), a_2^h\rangle_{\mathfrak{g}}-\langle d_A\Theta ( a^h_2, a_3^h),  \ii_X F \rangle_{\mathfrak{g}}\big)
\cr
&= \int_{M^{2n}}d^{2n}x \sqrt g\,\, e^{-2\Phi}\,\,  \big( (X\wedge K)^{ij} \langle a^h_{2i}, a^h_{3j}\rangle_{\mathfrak{g}}   -g^{-1} \langle d_A\Theta ( a^h_2, a_3^h),  \ii_X F \rangle_{\mathfrak{g}}\big)
\label{HF}
\end{align}
where $K^\flat=\theta+2d\Phi$.
On the other hand the associated 1-form $\mathcal{F}_X$ to the vector field $a^h_X=\ii_X F$ with respect to the metric $\mathcal{G}$ on the moduli space ${\mathscr{M}}^*_{\HE}$ is
\begin{align}
\alpha_{{}_X}^\flat(\alpha)\equiv \mathcal{F}_X(a^h)\equiv \int_{M^{2n}} \sqrt{g}\,\, e^{-2\Phi}\,\, g^{-1} \langle \ii_X F, a^h\rangle_{\mathfrak{g}}~.
\end{align}
Taking the exterior derivative of this yields
\begin{align}
d\mathcal{F}_X(a^h_2, a^h_3)=\int_{M^{2n}} d^{2n}x \sqrt{g}\,\, e^{-2\Phi}\,\, g^{-1}\big(   \langle \ii_X d_A a_2^h, a_3^h\rangle_{\mathfrak{g}}-  \langle \ii_X d_Aa_3^h, a_2^h\rangle_{\mathfrak{g}}- \langle \ii_X F, d_A\Theta(a_2^h, a_3^h) \rangle_{\mathfrak{g}}\big)~.
\label{interm}
\end{align}
To  evaluate the first two terms in the right hand side of the last  equation above, we use the identity
\begin{align}
&\langle\ii_X d_A a^h_{2i}, a^{hi}_3\rangle_{\mathfrak{g}}- (a^h_3, a^h_2)
=Y^i D_{i} \langle I^{jk} a^h_{2j}, a^h_{3k}\rangle_{\mathfrak{g}}-D_n \langle Y^i a^h_{2i}, I^n{}_j a^{j}h_3\rangle_{\mathfrak{g}} +D_n \langle Y^i a^h_{3i},  I^n{}_j a^{hj}_2 \rangle_{\mathfrak{g}}
\cr
&\qquad\qquad+ \langle Y^i a^h_{2i},  D_n I^n{}_j a^{hj}_3\rangle_{\mathfrak{g}}-\langle Y^i a^h_{3i},  D_n I^n{}_j a^{hj}_2\rangle_{\mathfrak{g}}
\label{anid}
\end{align}
derived in \cite{gp}. After substituting (\ref{anid}) into (\ref{interm}), we find that the contribution of the first term in the right hand side of (\ref{anid}) vanishes upon using that $Y$ is Killing and $Y\Phi=0$  -- it gives rise to a surface term while $M^{2n}$ is compact without boundary.   To continue, we integrate by parts  the contribution of the second and third terms in the right hand side of (\ref{anid}) in (\ref{interm}). This will result into terms that will depend on the first derivative of the dilaton $\Phi$.  Next,  we combine this result with the contribution of the fourth and fifth terms of (\ref{anid}) in (\ref{interm}). The latter terms can be expressed in terms of the Lee form $\theta$ of $M^{2n}$. Putting all these together, it yields that
\begin{align}
d\mathcal{F}_X(a^h_2, a^h_3)&=\int_{M^{2n}} d^{2n}x \sqrt{g}\,\, e^{-2\Phi}\,\, \big(  (IX\wedge IK)^{ij} \langle a_{2i}^h, a_{3j}^h\rangle_{\mathfrak{g}}- g^{-1} \langle \ii_X F, d_A\Theta(a_2^h, a_3^h) \rangle_{\mathfrak{g}}\big)~.
\label{dF}
\end{align}
 On comparing (\ref{dF}) with (\ref{HF}), we conclude that $ d\alpha^\flat_{{}_X}=\ii_{\alpha_{{}_X}} \mathcal{H}$
 provided that
\be
X^\flat\wedge K^\flat=\ii_I X^\flat\wedge \ii_I K^\flat~,
\label{xtii}
 \ee
which is one of the hypotheses of the theorem. As $\alpha_X$ is also Killing, this implies that $\alpha_X$ is $\h{\mathcal{D}}$-covariantly constant.
\end{proof}

\begin{remark}
The commutator of two Killing and holomorphic vector fields $\alpha_X$ and $\alpha_Y$ is given as in (\ref{xycom}) as it is not affected by the presence of a dilaton.
\end{remark}

\section{Holomorphic and Killing vector fields on geometries with a dilaton field}\label{sec:k}

We have demonstrated that under certain conditions holomorphic and Killing vector fields on the underlying Hermitian manifold $(M^{2n}, g, \omega)$ induce holomorphic and Killing vector fields
on the moduli space $\mathscr{M}_\HE^*(M^{2n})$ of Hermitian-Einstein connections. In the presence of a dilaton $\Phi$, the investigation of the geometry of $\mathscr{M}_\HE^*(M^{2n})$ requires the condition
\be
D^i \big(e^{-2\Phi} K^\flat_i\big)=0~,~~~K^\flat=\theta+2d\Phi~.
\label{ktheta}
\ee
It is clear that whether a geometry on $M^{2n}$ satisfies this condition depends on the properties of the 1-form $K^\flat$. Moreover, in the investigation of symmetries on the moduli space $\mathscr{M}_\HE^*(M^{2n})$ in section \ref{sec:d}, the vector field $K$ also plays key role.
All these warrant a systematic understanding of the properties of the vector field $K$ and this is the main topic that will be explored below.

In what follows in this section, we shall consider manifolds $(M^d, g, H)$  with metric $g$ and 3-form torsion $H$ such that
\be
\h R_{ij}+ 2 \h\nabla_i\partial_j\Phi-{1\over 4} P_{ij}=0~,~~~
\label{seqn}
\ee
for $P_{ij}=P_{ji}$ that will be specified below.  As we have already explained, these geometries include all those that have found applications in either string theory or in the theory of geometric (generalised) flows.
The condition on the geometry (\ref{seqn}) is  one of the assumptions motivated by  the applications mentioned above. We shall not assume that $M^d$ is compact unless it is explicitly stated.

\subsection{Symmetries of KT geometry with dilaton}

The main result here is an extension  of the proposition 4.1  proven by Streets and Ustinovskiy in \cite{jsyu}   to geometries  that satisfy (\ref{seqn}).  In particular, we shall show the following.

\begin{prop}\label{prop:hol}
Let $(M^{2n}, g, \omega)$ be a KT manifold that satisfies\footnote{It would be of interest to construct (smooth) compact KT manifolds that satisfy (\ref{seqn}) for which the (reduced)
holonomy of $\h\nabla$ is strictly $U(n)$, i.e. examples for which  (\ref{seqn}) is not a consequence of  reduction for the holonomy of  $\h\nabla$ to a proper subgroup of $U(n)$.}  (\ref{seqn}) with $P_{ij}=dH_{ikmn} I^k{}_j \omega^{mn}$. The following statements hold
\be \mathcal {L}_K g^{2,0}=0~,\ee
\be \mathcal {L}_K \omega^{2,0}=0~,\ee
\be\mathcal {L}_K \omega= -2 \rho^{1,1}~,\ee
\be \mathcal{L}_{Y} g=\mathcal{L}_{Y}\omega=0, ~\mathrm{ where}~~ Y=-IK~,
\ee
where $K^\flat=\theta+2 d\Phi$ and $\rho$ is the Ricci form of $\h\nabla$ connection.

In particular, $K$ is holomorphic, but not necessarily Killing, vector field and $Y=-IK$ is both holomorphic and Killing vector field.
\end{prop}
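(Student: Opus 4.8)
The plan is to turn the curvature condition (\ref{seqn}) into a single equation for the Bismut derivative $\h\nabla_i K^\flat_j$ of the one-form $K^\flat=\theta+2d\Phi$, and then to read off all four identities from the decomposition of that equation into its symmetric/antisymmetric and $(p,q)$-type parts. Substituting $K^\flat_i=\theta_i+2\partial_i\Phi$ into (\ref{seqn}) gives at once
\[
\h\nabla_i K^\flat_j=\h\nabla_i\theta_j-\h R_{ij}+\tfrac14 P_{ij}.
\]
The engine is then the general Kähler-with-torsion Ricci identity obtained by contracting the Bismut Bianchi identity $\h R_{ikmn}+\mathrm{cyclic}=-\h\nabla_i H_{kmn}+\tfrac12 dH_{ikmn}$ with $I$ and $\omega$, exactly as in the derivation of (\ref{rhoeqnxx}) but \emph{without} imposing $SU(n)$ holonomy. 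For CYT manifolds the Bismut Ricci form $\rho$ drops out and one recovers $\h\nabla_i\theta_j=\h R_{ij}-\tfrac14 P_{ij}$, so that $K$ becomes $\h\nabla$-parallel; for general $U(n)$ holonomy the same contraction retains $\rho$, expressing $\h\nabla_i K^\flat_j$ through $\rho$ together with a symmetric remainder.

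With this master equation available, I would compute $\mathcal{L}_K g$ and $\mathcal{L}_K\omega$ directly from $\h\nabla K^\flat$. Because $H$ is totally skew, the torsion terms cancel under symmetrisation, so $(\mathcal{L}_K g)_{ij}=\h\nabla_i K^\flat_j+\h\nabla_j K^\flat_i$ is exactly the symmetric part of the master equation; decomposing that part by type and using the KT identities gives $\mathcal{L}_K g^{2,0}=0$. The skew part is carried by the Bismut Ricci form, and combining it with the torsion contribution $\ii_K H$ yields $\mathcal{L}_K\omega=-2\rho^{1,1}$, whence also $\mathcal{L}_K\omega^{2,0}=0$. The sole role of $SU(n)$ holonomy in the CYT computation was to discard $\rho$; retaining it is exactly what turns the conclusion ``$K$ is $\h\nabla$-parallel'' into the soliton identities above.

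To pass from these to holomorphicity of $K$, I would invoke the purely algebraic identity
\[
(\mathcal{L}_K\omega)(W,Z)=(\mathcal{L}_K g)(IW,Z)+g\big((\mathcal{L}_K I)W,Z\big),
\]
in which $\mu(W,Z)\defeq g\big((\mathcal{L}_K I)W,Z\big)$ is of pure type $(2,0)+(0,2)$ because $\mathcal{L}_K I$ anticommutes with $I$. Its $(2,0)+(0,2)$ part reads $\mathcal{L}_K\omega^{2,0}=(\mathcal{L}_K g^{2,0})(I\cdot,\cdot)+\mu$, so the two vanishing statements already obtained force $\mu=0$, i.e. $\mathcal{L}_K I=0$. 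Thus $K$ is holomorphic, whereas its symmetric derivative is $\mathcal{L}_K g=2\rho^{1,1}(I\cdot,\cdot)$, which need not vanish; hence $K$ is in general not Killing.

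Finally, for $Y=-IK$ holomorphicity is automatic: if $K$ is holomorphic then so is $IK$, since $K^{1,0}=\tfrac12(K-\i IK)$ is then a holomorphic section. For the Killing property I would use $Y^\flat=-\ii_K\omega$ together with the Cartan identity $\mathcal{L}_Y\omega=\ii_Y d\omega+d\,\ii_Y\omega$; since $\ii_Y\omega=K^\flat$ and $dK^\flat=d\theta$, everything reduces to the single identity $\ii_Y d\omega=-d\theta$, after which $\mathcal{L}_Y\omega=0$ and, $Y$ being holomorphic, $\mathcal{L}_Y g=0$ follow. \textbf{The main obstacle} is the control of the $(2,0)+(0,2)$ sector throughout: on a non-Kähler KT manifold the Bismut Ricci form itself carries a $(2,0)+(0,2)$ part and $d\omega\neq0$, so one must track how the $\theta$-, $d\Phi$- and $H$-terms conspire, first to kill the $(2,0)$ components of $\mathcal{L}_K g$ and $\mathcal{L}_K\omega$ and then to produce $\ii_Y d\omega=-d\theta$. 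It is exactly here that the full strength of (\ref{seqn}), rather than the bare KT structure, is needed.
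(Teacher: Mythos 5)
You have correctly reconstructed the paper's skeleton: contracting the Bismut Bianchi identity with $I^m{}_j\omega^{mn}$ gives (\ref{rhoeqn}) with no holonomy assumption, and combining with (\ref{seqn}) yields the master equation $\h\nabla_i K^\flat_j=\rho_{ik}I^k{}_j$ --- exactly, by the way, with no ``symmetric remainder''. From it, the first statement ($(\mathcal{L}_Kg)_{\alpha\beta}=\h\nabla_\alpha K_\beta+\h\nabla_\beta K_\alpha=i\rho_{\alpha\beta}+i\rho_{\beta\alpha}=0$ by skewness of $\rho$), the third statement on the $(1,1)$ sector, and your algebraic extraction of $\mathcal{L}_KI=0$ from the vanishing $(2,0)$ parts are precisely the paper's steps. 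The genuine gap is the second statement. At type $(2,0)$ the master equation says $\h\nabla_\alpha K_\beta=i\rho_{\alpha\beta}$, while the Lie derivative of $\omega$ computed through the Bismut connection carries, besides the skew part of $\h\nabla K$, the torsion term $(\ii_KH)^{2,0}$: up to convention-dependent signs, $(\mathcal{L}_K\omega)_{\alpha\beta}=-2\rho_{\alpha\beta}+2i\,K^{\bar\delta}H_{\bar\delta\alpha\beta}$, and no formal manipulation of the master equation makes this vanish --- you would need the additional identity $\rho^{2,0}\propto(\ii_KH)^{2,0}$, which is not automatic on a KT manifold. What closes it in the paper is an independent complex-coordinate computation: the KT identity $D^iH_{i\beta\gamma}=-\partial_\beta\theta_\gamma+\partial_\gamma\theta_\beta-\theta_\delta H^\delta{}_{\beta\gamma}$, combined with the skew $(2,0)$ component of (\ref{seqn}) in the form $D^iH_{i\beta\gamma}=2H^\delta{}_{\beta\gamma}\partial_\delta\Phi$. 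This is the paper's only substantive calculation, and it is exactly the step you explicitly defer (``the main obstacle'') without supplying. Since holomorphicity of $K$ --- and hence everything downstream, including holomorphicity of $Y$ --- hinges on the second statement, the proposal is incomplete at its crux rather than merely sketchy.

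Your treatment of the Killing property of $Y$ is also off track as written. Reducing $\mathcal{L}_Y\omega=0$ via Cartan's formula to $\ii_Yd\omega=-d\theta$ replaces the claim by an equivalent unproven identity (one can check it is again only verifiable using the same missing $(2,0)$ relation; note also that with the paper's convention $\omega_{ij}=g_{ik}I^k{}_j$ the sign in $\ii_Y\omega=\pm K^\flat$ needs care). But your own master equation already gives the paper's one-line argument: since $\h\nabla I=0$, one has $\h\nabla_i Y^\flat_j=\mp\rho_{ij}$, which is skew; and because $H$ is totally antisymmetric, $D_{(i}Y^\flat_{j)}=\h\nabla_{(i}Y^\flat_{j)}=0$, so $Y$ is Killing independently of the second statement, after which $\mathcal{L}_Y\omega=0$ follows from $\mathcal{L}_Yg=0$ together with $\mathcal{L}_YI=0$ once holomorphy of $K$ is established. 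I recommend you supply the divergence identity for $D^iH_{i\beta\gamma}$ and replace the Cartan detour by this direct argument.
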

\begin{proof}

To prove the first statement, first notice that $P_{ij}=P_{ji}$ as $dH$ is a (2,2)-form. Next consider the Bianchi identity
\be
\h R_{ikmn}+\mathrm{cyclic ~in}~~ (k,\,m,\,n)=-\h \nabla_i H_{kmn}+\frac{1}{2} dH_{ikmn}~.
\label{bian2}
\ee
Contracting this with $I^m{}_j \omega^{mn}$ and upon using (\ref{seqn}), we find that
\be
 \rho_{ik} I^k{}_j+ \h R_{ij}-\frac{1}{4} P_{ij}=\h\nabla_i\theta_j\Longrightarrow \rho_{ik} I^k{}_j=\h\nabla_i (\theta_j+2\partial_j\Phi)=\h\nabla_i K_j~,
 \label{rhoeqn}
 \ee
 where
 \be
 \rho_{ij}=\frac{1}{2} \h R_{ijmn} \omega^{mn}~.
 \ee
 Thus
 \be
 \h\nabla_\alpha K_\beta+\h\nabla_\beta K_\alpha= \nabla_\alpha K_\beta+\nabla_\beta K_\alpha= i\rho_{\alpha\beta}+ i \rho_{\beta\alpha}=0
 \ee
 as $P$ is a $(1,1)$ tensor,
 where we have used complex coordinates $(x^i; i=1,\dots, 2n)=(z^\alpha, z^{\bar\alpha}; \alpha=1,\dots, n)$ with  $I= (i\delta^\alpha{}_{\beta}, -i \delta^{\bar\alpha}{}_{\bar\beta})$. This proves the first statement.

 For the proof of the second statement, note that in complex coordinates
 \be
 H_{\alpha\bar\beta\bar\gamma}=-\partial_{\bar\beta} g_{\alpha\bar\gamma}+\partial_{\bar\gamma} g_{\alpha\bar\beta}~,~~~\theta_\alpha= H_{\alpha\beta\bar\gamma} g^{\beta\bar\gamma}~.
 \ee
 Then a straightforward, though somewhat lengthy, calculation  reveals that
 \be
 D^i H_{i\beta\gamma}= g^{\alpha\bar\alpha} D_\alpha H_{\bar\alpha\beta\gamma}+g^{\alpha\bar\alpha}D_{\bar\alpha} H_{\alpha\beta\gamma}= -\partial_\beta \theta_\gamma+\partial_\gamma\theta_\beta- \theta_\delta H^\delta{}_{\beta\gamma}~,
 \ee
 where the integrability of the complex structure has been used, i.e. that $H_{\alpha\beta\gamma}=0$. Then one finds that
  \be
 \mathcal{L}_{\theta^\flat} \omega_{\beta\gamma}= i\partial_\beta \theta_\gamma- i\partial_\gamma\theta_\beta+ i \theta_\delta H^\delta{}_{\beta\gamma}~.
 \label{iiparti}
 \ee
 Thus, one concludes that
 \be
 D^iH_{i\beta\gamma}=i \mathcal{L}_{\theta^\flat} \omega_{\beta\gamma}~.
 \ee

 Next from (\ref{seqn}), we have   that
 \be
 D^i H_{i\beta\gamma}= 2 H^\delta{}_{\beta\gamma} \partial_\delta \Phi~.
 \ee
 and a computation reveals that
 \be
 (\mathcal{L}_{d\Phi^\flat} \omega)_{\beta\gamma}=i \partial_\delta\Phi H^\delta{}_{\beta\gamma}
 \label{iipartii}
 \ee
 Therefore,  from (\ref{iiparti}) and (\ref{iipartii}) we establish the second statement.

 Notice that in complex coordinates, $I= (i\delta^\alpha{}_{\beta}, -i \delta^{\bar\alpha}{}_{\bar\beta})$, the first and second properties imply that $\partial_{\bar\beta} K^\alpha=0$, i.e. that $K$ is holomorphic, and so
 \be
 \mathcal{L}_K I=0~.
 \ee

 The third property follows from (\ref{rhoeqn}) as
 \be
 \mathcal{L}_K g_{\alpha\bar\beta}=\nabla_\alpha K_{\bar\beta}+ \nabla_{\bar\beta} K_{\alpha}=\h\nabla_\alpha K_{\bar\beta}+ \h\nabla_{\bar\beta} K_{\alpha}=
 -i \rho_{\alpha\bar\beta}+ i \rho_{\bar\beta\alpha}= -2 i \rho_{\alpha\bar\beta}
 \ee
 and so
 \be
 \mathcal{L}_K \omega_{\alpha\bar\beta}=-i \mathcal{L}_K g_{\alpha\bar\beta}=-2 \rho_{\alpha\bar\beta}~.
 \ee

 Next as $K$ is holomorphic so is $Y=-IK$.  It remains to show that it is Killing. This follows from (\ref{rhoeqn}) as
 \be
 \rho_{ij}=-\h\nabla_i (K^\flat_m I^m{}_j)=-\h\nabla_i Y^\flat_j~.
 \ee
 Then using that $\rho$ is a 2-form, it is straightforward to see that $Y$ is Killing.
\end{proof}

\begin{remark}
As it has already been mentioned, for conformally balanced KT manifolds $\theta=-2d\Phi$ and so $K=0$. Thus, $Y=0$ as well. Therefore, not all KT manifolds that satisfy (\ref{seqn}) admit holomorphic isometries.
\end{remark}

\begin{prop}\label{prop:2}
Suppose that $(M^{2n}, g, \omega)$ is a compact KT manifold that satisfies  (\ref{seqn}) and $Z$ is a holomorphic and Killing vector field. Then, the dilaton field is invariant under the action of $Z$,  $Z\Phi=0$.  In particular, $Y$ leaves $\Phi$ invariant, $Y\Phi=0$, for $Y=-I K$.
\end{prop}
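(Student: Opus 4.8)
The plan is to mimic the Perelman-style argument used in Proposition~\ref{prop:kinv}, now applied to the conformal invariance equation (\ref{seqn}) in place of the scale invariance condition. The first step is to record that a holomorphic and Killing $Z$ preserves the whole KT structure. Indeed $\mathcal{L}_Z g=0$ together with $\mathcal{L}_Z I=0$ gives $\mathcal{L}_Z\omega=0$; since $H=-\ii_I d\omega$ and $\mathcal{L}_Z$ commutes with $d$, this yields $\mathcal{L}_Z H=0$. Hence $Z$ is an affine symmetry of the connection with torsion, $\mathcal{L}_Z\h\nabla=0$, which also gives $\mathcal{L}_Z\h R=0$; and as $P_{ij}=dH_{ikmn} I^k{}_j\omega^{mn}$ is assembled entirely from $H$, $I$ and $\omega$, we get $\mathcal{L}_Z P=0$ as well.

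Next I would take the Lie derivative of (\ref{seqn}) along $Z$. By the observations above the Ricci and $P$ terms drop out, leaving
\be
\mathcal{L}_Z\big(\h\nabla_i\partial_j\Phi\big)=0~.
\ee
Since $\mathcal{L}_Z\h\nabla=0$ the Lie derivative commutes with the connection, and using $\mathcal{L}_Z\,d\Phi=d(Z\Phi)$ this collapses to $\h\nabla_i\partial_j(Z\Phi)=0$: the $\h\nabla$-Hessian of the function $Z\Phi$ vanishes. Contracting with $g^{ij}$ and noting that the torsion contribution drops out because $g^{ij}H^k{}_{ij}=0$ by antisymmetry, I obtain $D^2(Z\Phi)=0$, so that $Z\Phi$ is harmonic.

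On the compact $M^{2n}$ a harmonic function is constant, so $Z\Phi=c$. To pin down $c=0$ I would integrate: $Z$ Killing implies $D_iZ^i=0$, so $Z\Phi=D_i(\Phi Z^i)$ is a total divergence and $\int_{M^{2n}}\sqrt{g}\,Z\Phi=0$; as the volume is positive this forces $c=0$, i.e. $Z\Phi=0$. The stated particular case $Y\Phi=0$ is then immediate on taking $Z=Y=-IK$, which is holomorphic and Killing by Proposition~\ref{prop:hol}.

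The only genuinely nontrivial point, and the one I would be most careful about, is the identity $\mathcal{L}_Z\h\nabla=0$, which rests on $\mathcal{L}_Z H=0$. This is precisely where holomorphicity of $Z$ is indispensable and not merely the Killing property: the torsion $H$ is determined by the pair $(g,I)$, and a non-holomorphic isometry need not preserve it. Once the surviving contribution is identified as the pure dilaton Hessian, everything else is the standard harmonicity-on-a-compact-manifold argument already deployed in Proposition~\ref{prop:kinv}.
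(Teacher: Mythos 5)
Your proposal is correct and takes essentially the same route as the paper: the paper's proof likewise observes that a holomorphic and Killing $Z$ preserves $H$, $P$ and $\h R$, and then invokes the Lie-derivative argument of Proposition \ref{prop:kinv} (vanishing Hessian of $Z\Phi$, hence $D^2(Z\Phi)=0$, constancy on the compact $M^{2n}$, and vanishing of the constant since $Z$ is divergence-free so $\int_{M^{2n}}\sqrt{g}\,Z\Phi=0$), exactly as you spell out. You have merely written in full the steps the paper leaves implicit by reference, including the correct emphasis that holomorphicity is what guarantees $\mathcal{L}_Z H=0$.
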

\begin{proof}
As $Z$ is holomorphic and Killing and $H$ is expressed in terms of the metric and complex structure, one has that $\mathcal{L}_Z H=0$. Similarly, one has that
$\mathcal{L}_Z P=0$ and $\mathcal{L}_Z \h R=0$. Then, using an argument as that deployed in the proof of Proposition \ref{prop:kinv}, we deduce that $Z\Phi=0$.

In Proposition \ref{prop:hol}, we have demonstrated that $Y=-IK$ is both holomorphic and Killing. Then, it follows from the argument above that $Y\Phi=0$ and so $\Phi$ is invariant under the action of $Y$.
\end{proof}

\subsubsection{Symmetries of CYT and HKT geometries with dilaton}

Suppose that $M^{2n}$ admits a CYT structure.  As a result the Ricci form vanishes, $\rho=0$. This leads to the following statement as a corollary of the Proposition \ref{prop:hol}.

\begin{corollary}\label{cor:CYT}
 Suppose that $(M^{2n}, g, \omega)$ is a CYT manifold that satisfies (\ref{seqn}), then $K$ and $Y=-IK$ with $K^\flat=\theta+2d\Phi$ are both holomorphic and $\h\nabla$-covariantly constant.  In particular, both $K$ and $Y$ are Killing vector fields.
\end{corollary}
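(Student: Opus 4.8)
The plan is to obtain Corollary \ref{cor:CYT} as a near-immediate consequence of Proposition \ref{prop:hol} once the Calabi--Yau with torsion hypothesis is imposed. The decisive input is the intermediate identity (\ref{rhoeqn}) established inside the proof of that proposition, namely $\h\nabla_i K_j = \rho_{ik} I^k{}_j$, which expresses the \emph{full} covariant derivative of $K^\flat$ with respect to the connection $\h\nabla$ in terms of its Ricci form $\rho$. Since a CYT structure is by definition a KT structure whose $\h\nabla$-holonomy reduces to $SU(n)$, its Ricci form vanishes, $\rho=0$, as recalled just before the statement. Substituting $\rho=0$ into (\ref{rhoeqn}) gives $\h\nabla_i K_j=0$, so $K$ is $\h\nabla$-covariantly constant. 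This upgrades the generic conclusion of Proposition \ref{prop:hol} — where $K$ was holomorphic but not necessarily Killing — to full parallelism, because in the CYT case the obstruction $\rho$ is precisely what is removed.

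Next I would transport parallelism from $K$ to $Y=-IK$. Writing $Y^\flat_j=K^\flat_m I^m{}_j$ as in the proof of Proposition \ref{prop:hol} and applying the Leibniz rule,
\be
\h\nabla_i Y^\flat_j=(\h\nabla_i K^\flat_m)\, I^m{}_j+K^\flat_m\, \h\nabla_i I^m{}_j~,
\ee
the first term vanishes by the previous step and the second vanishes because $\h\nabla$ is the compatible connection of the Hermitian structure, so $\h\nabla I=0$; hence $\h\nabla Y^\flat=0$ as well. Holomorphicity of both $K$ and $Y$ requires no new argument: it was already established in Proposition \ref{prop:hol} from the $(2,0)$-statements $\mathcal{L}_K g^{2,0}=\mathcal{L}_K \omega^{2,0}=0$, which are independent of the value of $\rho$, together with the fact that $Y=-IK$ is holomorphic whenever $K$ is.

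Finally I would deduce that both fields are Killing from their $\h\nabla$-parallelism. Using the relation between the two connections on a $1$-form, $\h\nabla_i K_j=D_i K_j-\tfrac12 H_{ijk}K^k$ with $D$ the Levi-Civita connection, the condition $\h\nabla K^\flat=0$ becomes $D_i K_j=\tfrac12 H_{ijk}K^k$; the right-hand side is antisymmetric in $(i,j)$ because $H$ is a $3$-form, so $D_{(i}K_{j)}=0$, i.e. $K$ is Killing, and the identical computation applies to $Y$. This is the only step that must actually be checked, and it is clean precisely because the torsion is totally skew-symmetric; the same mechanism was already noted around (\ref{concon2}), where $\h\nabla$-parallelism of a vector field was seen to force it to be Killing with $\ii_\cdot H=d(\cdot)^\flat$. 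I therefore anticipate no genuine obstacle: the entire content of the corollary is that the CYT condition sets $\rho=0$ in (\ref{rhoeqn}), after which $\h\nabla$-parallelism, and hence the Killing and holomorphic properties of both $K$ and $Y$, follow formally.
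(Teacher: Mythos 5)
Your proposal is correct and follows the paper's own proof essentially verbatim: the paper likewise observes that $K$ is holomorphic by Proposition \ref{prop:hol}, deduces $\h\nabla K=0$ by comparing (\ref{seqn}) with (\ref{rhoeqn}) once the CYT condition forces $\rho=0$, concludes that $K$ is Killing with $dK^\flat=\ii_K H$, and obtains $\h\nabla Y=0$ for $Y=-IK$ from $\h\nabla I=0$ and $\h\nabla K=0$. The only difference is presentational — you write out explicitly the standard step that $\h\nabla K^\flat=0$ together with total skew-symmetry of $H$ gives $D_{(i}K_{j)}=0$, which the paper simply asserts.
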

\begin{proof}
It has already been demonstrated in proposition  \ref{prop:hol} that $K$ is holomorphic. Moreover, as $\rho=0$ and on comparing (\ref{seqn}) with (\ref{rhoeqn}) we deduce that
\be
\h\nabla K=0~,
\ee
i.e. that $K$ is $\h\nabla$-covariantly constant.  It follows from this that $K$ is Killing and $dK^\flat=\ii_K H$.

We have already demonstrated that $Y$ is holomorphic and Killing. It is also $\h\nabla$-covariantly constant as both $I$ and $K$ are $\h\nabla$-covariantly constant.
\end{proof}

\begin{remark}
As $K$ and $Y=-IK$ are holomorphic and $\h\nabla$-covariantly constant, it easily follows that they commute $[K,Y]=0$, see e.g. \cite{gp2}.  This is a consequence of the fact that the commutator of two $\h\nabla$-covariantly constant vector field can be expressed in terms of $\ii_K\ii_Y H$ and that $\ii_K H$ is a $(1,1)$-form as $K$ is holomorphic.  Furthermore, if $M^{2n}$ is compact, then as a consequence of Proposition \ref{prop:2} one has $K\Phi=Y\Phi=0$.
\end{remark}

Next let us turn to the HKT geometries $(M^{4k}, g, \omega_r)$.  These admit a hypercomplex structure $(I_r; r=1,2,3)$, with Hermitian forms $\omega_r(X,Z)=g(X, I_r Z)$, compatible with a connection $\h\nabla$ with skew-symmetric torsion $H$.  It follows that the holonomy of $\h\nabla$ is included in $Sp(k)$ and as a result all three Ricci forms
\be
(\rho_r)_{ij} =\frac{1}{2} \h R_{ikmn} I^k_{rj} \omega^{mn}_r~,~~~(\mathrm{no~summation~over~r})~,
\ee
vanish, $\rho_r=0$.  Another property of the HKT geometry, which is required for the proof of the corollary below, is that all three Lee forms $(\theta_r; r=1,2,3)$, with $\theta_r$ associated to the complex structure $I_r$,  are equal
\be
\theta=\theta_1=\theta_2=\theta_3~,
\ee
see e.g. \cite{pw} for a simple proof.
This is a property of the HKT geometry and it is valid irrespectively on whether $H$ is a closed 3-form or not.

\begin{corollary}\label{cor:hktsym}
Suppose that $(M^{4k}, g, \omega_r)$ is an HKT  manifold that satisfies (\ref{seqn}), then $K$ with $K^\flat=\theta+2d\Phi$ is tri-holomorphic and $\h\nabla$-covariantly constant vector field on $M^{4k}$, i.e. $K$ is holomorphic with respect to all three complex structures. Moreover, $Y_r=- I_r K$ is holomorphic with respect to the $I_r$ complex structure and $\h\nabla$-covariantly constant. In particular, the vector fields $K$ and $Y_r$ are Killing.
\end{corollary}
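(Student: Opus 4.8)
The plan is to reduce the statement to three applications of Corollary \ref{cor:CYT}, one for each of the complex structures $I_1, I_2, I_3$, exploiting the fact that an HKT manifold is simultaneously CYT with respect to all three. First I would record the two structural features of HKT geometry already noted above: the holonomy of $\h\nabla$ lies in $Sp(k)\subseteq SU(2k)$, so each $(M^{4k}, g, \omega_r)$ is a CYT manifold and all three Ricci forms vanish, $\rho_r=0$; and the three Lee forms coincide, $\theta_1=\theta_2=\theta_3=\theta$. The second fact is essential, since it means that the $1$-form $K^\flat=\theta+2d\Phi$ is a single object, the same for each of the three Hermitian structures, so that there is genuinely one vector field $K$ to discuss rather than three.

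The one point that requires care before Corollary \ref{cor:CYT} can be invoked with respect to each $I_r$ is that the hypothesis (\ref{seqn}) must hold with $P_{ij}=dH_{ikmn}(I_r)^k{}_j\omega_r^{mn}$ for every $r$; equivalently this expression must be independent of $r$. I would establish this from the purely geometric identity obtained by contracting the Bianchi identity in the proof of Proposition \ref{prop:hol}, read off \emph{before} (\ref{seqn}) is imposed, namely
\be
(\rho_r)_{ik}(I_r)^k{}_j+\h R_{ij}-\frac{1}{4} (P_r)_{ij}=\h\nabla_i(\theta_r)_j~,
\ee
which is valid for each Hermitian structure. Setting $\rho_r=0$ and $\theta_r=\theta$ gives $(P_r)_{ij}=4(\h R_{ij}-\h\nabla_i\theta_j)$, which is manifestly independent of $r$. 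Hence the single assumption (\ref{seqn}) is unambiguous and holds with respect to all three complex structures.

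With this in hand I would apply Corollary \ref{cor:CYT} to each $(M^{4k}, g, \omega_r)$ in turn. For each $r$ it yields that the common vector field $K$ is holomorphic with respect to $I_r$ and $\h\nabla$-covariantly constant, and that $Y_r=-I_r K$ is holomorphic with respect to $I_r$ and $\h\nabla$-covariantly constant; covariant constancy then forces both to be Killing. Running this over $r=1,2,3$ shows that $K$ is holomorphic with respect to every complex structure, i.e.\ tri-holomorphic, while any single application already gives $\h\nabla K=0$ consistently (the three versions agree because $\rho_r=0$ and $K$ is common). The covariant constancy of each $Y_r$ can also be seen directly from $\h\nabla I_r=0$ and $\h\nabla K=0$ through the product rule $\h\nabla(I_r K)=(\h\nabla I_r)K+I_r\h\nabla K=0$, and $\h\nabla$-parallel vector fields are Killing.

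The main obstacle is precisely the $r$-independence of $P$ in the middle step, which is what legitimises transferring the single hypothesis (\ref{seqn}) to each complex structure. This matters because the holomorphicity of $K$ with respect to $I_r$ does not follow from $\h\nabla K=0$ by itself -- that would require $\ii_K H$ to be of type $(1,1)$ with respect to $I_r$, a condition not implied by covariant constancy alone. The holomorphicity genuinely uses the vanishing of $\rho_r$ and the structure of $P_r$ through the argument of Proposition \ref{prop:hol}, so one really does need (\ref{seqn}) to be available for all three complex structures, not just one; establishing that the lone hypothesis supplies all three is the crux of the proof.
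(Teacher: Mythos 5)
Your proposal is correct and follows essentially the same route as the paper: the paper's proof likewise applies Corollary \ref{cor:CYT} to each Hermitian structure $(M^{4k},g,\omega_r)$, using that the three Lee forms coincide so that $K^\flat=\theta+2d\Phi$ is one common object, and deduces $\h\nabla Y_r=0$ from $\h\nabla I_r=\h\nabla K=0$, with Killing following from covariant constancy. The one point you treat more carefully than the paper --- verifying via the pre-(\ref{seqn}) Bianchi contraction $(\rho_r)_{ik}(I_r)^k{}_j+\h R_{ij}-\tfrac{1}{4}(P_r)_{ij}=\h\nabla_i(\theta_r)_j$ with $\rho_r=0$ and $\theta_r=\theta$ that $P_r$ is independent of $r$, so the single hypothesis (\ref{seqn}) is available for all three complex structures --- is left implicit in the paper, and your making it explicit is a legitimate tightening rather than a different method.
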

\begin{proof}
It is clear from the definition of HKT manifolds that they admit a CYT structure with respect to each one of the complex structures $I_r$. As the Lee form of the geometry does not depend on the choice of the complex structure that it is used to define it,  it follows from the previous corollary that $K$ must be holomorphic with respect to each one of the complex structures $I_r$ and $\h\nabla$-covariantly constant.  Thus, in particular, $K$ is tri-holomorphic.

It also follows from the previous corollary that $Y_r$ must be holomorphic with respect to the $I_r$ complex structure. It is also $\h\nabla$-covariantly constant as both $I_r$ and $K$ are $\h\nabla$-covariantly constant. The $\h\nabla$-covariant constancy condition implies that all $K$ and $Y_r$ are Killing vector fields.
\end{proof}
\begin{remark}
A similar argument to that deployed in the CYT case above implies that $K$ commutes with $Y_r$, i.e. $[K, Y_r]=0$.  However in general $[Y_r, Y_s]\not=0$, for $r\not= s$. An example of such symmetries is the Lie algebra generated by the left- or right-invariant vector fields of HKT geometry on $S^3\times S^1$ viewed as the group manifold $SU(2)\times U(1)$. In this example,  $K=\partial_\tau$ with $\tau$ the coordinate along $S^1$ and $Y_r=- I_r K$ are the left-invariant vector fields on $S^3$, where $I_r$ are the left-invariant complex structures on $S^3\times S^1$,  see \cite{pw} for a detailed analysis.  In more than four dimensions, $k>1$, a general structure theorem has recently been proved for the Lie algebra of the vector fields $Y_r$ in \cite{brienza}.  But this symmetry cannot automatically be lifted on $\mathscr{M}^*_{\mathrm {HE}}$, where the Hermitian-Einstein condition is taken with respect to one of the complex structures of the underlying HKT geometry, as the geometry of the $\mathscr{M}^*_{\mathrm {HE}}$ depends on the choice of complex structure and the vector fields $Y_r$ are not holomorphic with respect to $I_s$ for $r\not=s$.
\end{remark}

\begin{remark}
If in addition the HKT manifold $M^{4k}$ is compact, then as a consequence of Proposition \ref{prop:2}, one has that $K\Phi=Y_r\Phi=0$. In four dimensions, $M^4$ is parallelisable because $K$ and $Y_r$ are also $\h\nabla$-covariantly constant. Since $K$ commutes with $Y_r$ and the commutator is given by the components of $H$, $H$ can be written as $H=f \, Y^\flat_1\wedge Y^\flat_2\wedge Y^\flat_3$ for some function $f$. However, $H$ is invariant under the action of $K$ and $Y_r$, because $K$ is tri-holomorphic and Killing and $Y_r$ is holomorphic with respect to $I_r$ and Killing, which implies that $f$ is constant and $H$ is necessarily a closed 3-form. As $\Phi$ is invariant under the action of all four vectors fields $K$ and $Y_r$, $\Phi$ is constant. These manifolds for $H\not=0$  are identifications of $S^3\times S^1$ with a discrete group. Another way to state this result is that the unique set of hyper-complex structures on $S^3\times S^1$ \cite{boyer} do not admit HKT geometries with $dH\not=0$ and non-constant dilaton $\Phi$.  For a detailed description of the HKT structures on $S^3\times S^1$ see \cite{pw}.
\end{remark}

\subsection{Symmetries of heterotic geometries}

For generic geometries with skew-symmetric torsion\footnote{Examples of such geometries are completely reductive homogeneous spaces, where $\h\nabla$ is identified with the left invariant canonical connection \cite{OP}.}, (\ref{seqn}) does not arise from the extremisation of an (energy) functional via a Perelman type of argument. However, it does in some special cases, like for example whenever  $dH=0$ and $P=0$ \cite{OSW2, OSW} and in the context of heterotic perturbation theory  $dH, P\not=0$ \cite{gp2}. The derivation of (\ref{seqn}) using a variation principle is desirable because the (energy) functional can be used to utilise a Perelman type of argument as in \cite{GFS, pw} to specify the geometries that satisfy  (\ref{seqn}) by putting a restriction on the holonomy of $\h\nabla$. To be specific, the holonomy of $\h\nabla$ can be chosen such that the geometry satisfies a scale invariant type of condition, like that in (\ref{sinv}) or (\ref{sgfeqn}) below, and then the (energy) functional can be used to demonstrate that in fact the same geometry solves (\ref{seqn}).

In  heterotic string geometries $(M^d, g, H, C)$, one introduces another field $C$,  a gauge connection with curvature $F(C)$,  and (\ref{seqn}) is supplemented with the additional field equation for $C$. Therefore, the geometry satisfies the equations
\be
\h R_{ij}+ 2 \h\nabla_i\partial_j\Phi-{1\over 4} P_{ij}=0~,~~~e^{2\Phi} \h\nabla_A^i( e^{-2\Phi} F(C)_{ij})=0~.
\label{cgfeqn}
\ee
In addition, $P$  is given by
\be
P_{ij}=-\frac{\alpha'}{4} \big(\tilde R_{ijkm} \tilde R_j{}^{jkm}- F(C)_{ik \ma\mb} F_{j}{}^{k\ma\mb}\big)~.
\label{ph}
\ee
and moreover
\be
dH=\frac{\alpha'}{4}\big(\tilde R^i{}_j\wedge \tilde R^j{}_i- F(C)^\ma{}_\mb\wedge F(C)^\mb{}_\ma\big)~,
\label{dh}
\ee
with  $\tilde R$ the curvature of  a connection $\tilde\nabla$ of the tangent bundle of $M^{2n}$.  Heterotic geometries  are more restrictive than those with general $H$ and $P$. In perturbation theory with parameter $\alpha'$, $\tilde R$ is identified with the curvature $\breve R$ of the connection $\breve \nabla$ that has torsion $-H$. It is expected that both (\ref{cgfeqn}) and (\ref{dh}) receive higher order corrections in $\alpha'$.  The result stated above is valid up to the order in $\alpha'$ explicitly indicated.

The equations  (\ref{cgfeqn}) with $P$ and $dH$ given as above are the conditions for conformal invariance for the heterotic sigma model or equivalently gradient flow (generalised) solitons. While, the equations for scale invariance or equivalently for  steady flow (generalised)  solitons are
\be
\h R_{ij}-\frac{1}{4} P_{ij}=2\h\nabla_i X_j~,~~\h\nabla_A^i F(C)_{ij}+X^iF(C)_{ij}=0~,
\label{sgfeqn}
\ee
where $P$ and $dH$ are given in (\ref{ph}) and (\ref{dh}), respectively.

It has been established in \cite{gp2} in the context of heterotic perturbation theory that if $M^d$ is compact, then (\ref{sgfeqn}) implies (\ref{cgfeqn}). Therefore, (\ref{sgfeqn}) implies (\ref{cgfeqn}), either via a Perelman type of argument or otherwise, provided that
\be
\h\nabla K=0~,~~~\ii_K F(C)=0~,
\label{sccon3}
\ee
 where $K^\flat=X^\flat+2d\Phi$.

 \begin{remark}
 The integrability condition of $\h\nabla K=0$ implies that $\h R_{ij}{}^k{}_m K^m=0$. Working in the context of perturbation theory and so setting $\tilde R=\breve R$, see e.g. \cite{gp2} and reference therein for more explanation in a similar context, at the order considered in $\alpha'$, $\breve R_{ijkm}=\h R_{kmij}$, and so $\ii_K \breve R=0$. This together with (\ref{dh}) and the first and  second equations in (\ref{sccon3}) imply that $\mathcal{L}_K H=0$.  Therefore $K$ is $\h\nabla$-covariantly constant, and so Killing, and leaves $H$ invariant. Thus if $M^d$ is compact, it follows from the remark below Proposition \ref{prop:kinv} that  $K\Phi=0$. In turn such geometries satisfy
 the condition $D^i \big(e^{-2\Phi} K_i\big)=0$ that arises in the context of moduli spaces in the presence of a dilaton field.
 \end{remark}

 Next suppose that $(M^{2n}, g, \omega)$ is a CYT manifold. The compatibility of (\ref{dh}) with (\ref{ph}) via the Bianchi identity (\ref{bian2})  can be achieved the taking both $\tilde  R$ and $F(C)$ connections to be Hermitian-Einstein, i.e. to satisfy (\ref{HEcond1}),  with $\lambda=0$.  In perturbation theory for $\tilde R=\breve R$, this is automatically true as a consequence of the restriction of the holonomy  $\h \nabla$ to be included in $SU(n)$.
In such a case, the Bianchi identity of $F(C)$ implies that
\be
\h R_{ij}-\frac{1}{4} P_{ij}=2\h\nabla_i \theta_j~,~~\h \nabla_A^j F(C)_{ji}+\theta^j F(C)_{ji}=0~,
\ee
i.e. these geometries satisfy (\ref{sgfeqn}) for $X^\flat=\theta$.
Therefore, either a Perelman style of argument or simply imposing that these geometries should also satisfy (\ref{cgfeqn})  implies
\be
\h\nabla K=0~,~~~\ii_K F(C)=0~,
\ee
with $K^\flat=\theta+2 d\Phi$.
It is also clear that $\ii_Y F(C)=0$ for $Y=-IK$ as $F(C)$ is a $(1,1)$-form. Adapting the results of Corollary \ref{cor:CYT} to this case, we can demonstrate the following proposition.

\begin{prop}\label{prop:3}
Suppose that $(M^{2n}, g, \omega)$ is a CYT manifold with $dH$ given in (\ref{dh}), where $F(C)$ is a Hermitian-Einstein connection with $\lambda=0$. Assuming that  $(M^{2n}, g, \omega)$ is either compact or simply  satisfies (\ref{cgfeqn}),  $M^{2n}$ admits two commuting, holomorphic and $\h\nabla$-covariantly constant  vector fields $K$ and $Y=-IK$ with  $K^\flat=\theta+ 2d\Phi$ provided that $K^\flat\not=0$.
\end{prop}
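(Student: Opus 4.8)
The plan is to show that Proposition \ref{prop:3} is, in essence, the specialization of Corollary \ref{cor:CYT} to the heterotic setting, the only genuinely new input being the verification that the heterotic data $(P, dH, F(C))$ reproduce the hypotheses of that corollary, together with the commutativity claim. First I would pin down the compatibility of the two expressions for $P$. In Proposition \ref{prop:hol} and Corollary \ref{cor:CYT} the tensor entering (\ref{seqn}) is $P_{ij}=dH_{ikmn}I^k{}_j\omega^{mn}$, whereas in the heterotic case $P$ is the curvature-square expression (\ref{ph}) and $dH$ is (\ref{dh}). I would contract the Bianchi identity (\ref{bian2}) with $I^m{}_j\omega^{mn}$ and use the hypothesis that both $\tilde R$ and $F(C)$ are Hermitian-Einstein with $\lambda=0$ (automatic for $\tilde R=\breve R$ once the holonomy of $\h\nabla$ lies in $SU(n)$) to check that the contraction $dH_{ikmn}I^k{}_j\omega^{mn}$ computed from (\ref{dh}) agrees with (\ref{ph}). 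This is exactly what makes (\ref{cgfeqn}) an instance of (\ref{seqn}) with the $P$ demanded by Corollary \ref{cor:CYT}.

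Next, with (\ref{seqn}) in force and $\rho=0$ (the defining CYT property, since the holonomy of $\h\nabla$ sits in $SU(n)$), I would read off from (\ref{rhoeqn}) that $\h\nabla_i K_j=\rho_{ik}I^k{}_j=0$ for $K^\flat=\theta+2d\Phi$, i.e. $K$ is $\h\nabla$-parallel. This is precisely the content of Corollary \ref{cor:CYT}, which then yields at once that $K$ and $Y=-IK$ are holomorphic, $\h\nabla$-covariantly constant, and (being parallel) Killing. The proviso $K^\flat\neq 0$ serves only to guarantee that $K$, and hence $Y$, are genuinely nonzero, excluding the conformally balanced degeneration $\theta=-2d\Phi$ in which $K=Y=0$.

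For the commutativity I would argue as in the remark following Corollary \ref{cor:CYT}: since $K$ and $Y$ are both $\h\nabla$-parallel, their Lie bracket is algebraic in the torsion, $[K,Y]^\flat=\ii_K\ii_Y H$; writing $Y=-IK$ and using that $\ii_K H$ is a $(1,1)$-form (a consequence of $K$ being holomorphic on the CYT manifold) together with the trivial identity $\ii_K\ii_K H=0$, one finds $\ii_K\ii_Y H=0$, hence $[K,Y]=0$. For completeness I would also record $\ii_K F(C)=0$ and $\ii_Y F(C)=0$, the former following from the scale-invariance gauge equation in (\ref{sccon3}) and the latter from $F(C)$ being a $(1,1)$-form and $Y=-IK$, by the same $(1,1)$-contraction identity.

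The main obstacle is the passage, in the compact case, from the scale-invariance system (\ref{sgfeqn}) (which the geometry satisfies for $X^\flat=\theta$, as the Bianchi computation shows) to the conformal-invariance system (\ref{cgfeqn}) that produces $\h\nabla K=0$ via (\ref{sccon3}). This ``steady implies gradient'' step is not formal: it rests on a Perelman-type monotonicity argument in heterotic perturbation theory established in \cite{gp2}, and it is exactly what allows one to drop the explicit assumption (\ref{cgfeqn}) and replace it by compactness. If instead (\ref{cgfeqn}) is assumed at the outset, this obstacle disappears and the proposition reduces to the compatibility check of the preceding paragraph plus the direct application of Corollary \ref{cor:CYT} and its commutativity remark.
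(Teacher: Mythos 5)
Your proposal is correct and takes essentially the same route as the paper: the Bianchi/Hermitian--Einstein compatibility check identifying the heterotic $P$ of (\ref{ph}) with $dH_{ikmn}I^k{}_j\omega^{mn}$, the observation that the CYT geometry satisfies the scale-invariance system with $X^\flat=\theta$ so that either the assumption (\ref{cgfeqn}) or, in the compact case, the Perelman-type argument of \cite{gp2} yields $\h\nabla K=0$ via Corollary \ref{cor:CYT}, and the $\ii_K\ii_Y H$ contraction with the $(1,1)$-property of $\ii_K H$ for $[K,Y]=0$. No gaps; your supplementary remarks on $\ii_K F(C)=\ii_Y F(C)=0$ likewise mirror the paper's discussion.
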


\begin{remark}
 At first order in perturbation theory, $H$ is a closed 3-form,  $dH=0$. In the context of  supersymmetric string compactifications with internal spaces CYT manifolds $M^{2n}$, $M^{2n}$ is conformally balanced $\theta=-2 d\Phi$, i.e. $K=0$,  as a consequence of the dilatino Killing spinor equation up to at least second order in perturbation theory.  Then, it follows from a theorem in \cite{SIGP3} that $M^{2n}$ is a Calabi-Yau manifold with $H=0$ at first order in perturbation theory. As Calabi-Yau manifolds do not admit Killing vector fields, it is likely that $K=0$ in all orders in perturbation theory. Therefore, one has to consider non-supersymmetric compactifications, like compactifications with  internal spaces  group manifolds, in order for $K$ not to vanish.
\end{remark}

\begin{remark}
The statement of Proposition \ref{prop:3} can be adapted   to HKT manifolds $(M^{4k}, g, \omega_r)$,  as they are special cases of CYT manifolds, provided that $F(C)$ is a $(1,1)$-form with respect to all three complex structures $I_r$. The difference is that $K$ is tri-holomorphic and $\h\nabla$-covariantly constant while $Y_r=-I_r K$ are holomorphic with respect to $I_r$ and $\h\nabla$-covariantly constant.
\end{remark}

\subsection{Application to Hermitian-Einstein connection moduli spaces}

It is clear that Hermitian geometries that satisfy (\ref{seqn}) with $K^\flat=\theta+2d\Phi$  admit a number of holomorphic and Killing vector fields.
Then, it is a consequence of the results of section \ref{sec:d} that the moduli space of Hermitian-Einstein connection will admit holomorphic and Killing vector field as well.  The results can be summarised in the theorem below.

\begin{theorem}\label{th:modkill}
Let $(M^{2n}, g,\omega)$ be a compact KT manifold that satisfies (\ref{seqn}) and $K$, with  $K^\flat=\theta+2d\Phi\not=0$, is a Killing vector field.  Then, $\mathscr{M}_\HE^*(M^{2n})$ admits a strong KT structure and  two  holomorphic and Killing vector field $\alpha_K$ and  $\alpha_Y$ for $Y=-IK$.  In addition, if $(M^{2n}, g,\omega)$ is a CYT manifold, then $\mathscr{M}_\HE^*(M^{2n})$ admits two commuting holomorphic and $\h{\mathcal{D}}$-covariantly constant vector field $\alpha_K$ and $\alpha_Y$.
\end{theorem}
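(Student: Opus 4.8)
The strategy is to assemble the statement from the structural results already established, using Propositions \ref{prop:hol} and \ref{prop:2} to supply the hypotheses demanded by Theorems \ref{th:skt}, \ref{th:holkill} and \ref{th:phihnabla}. First I would record that, since $(M^{2n},g,\omega)$ is KT and satisfies (\ref{seqn}), Proposition \ref{prop:hol} guarantees that $K$ is holomorphic and that $Y=-IK$ is both holomorphic and Killing. Combined with the hypothesis that $K$ is Killing and the compactness of $M^{2n}$, Proposition \ref{prop:2} then yields $K\Phi=Y\Phi=0$. With $K$ Killing and $K\Phi=0$ in hand, condition (\ref{phig}) follows at once: expanding $D^i(e^{-2\Phi}K^\flat_i)=e^{-2\Phi}(D^iK^\flat_i-2K^i\partial_i\Phi)$ and using $D^iK^\flat_i=0$ (divergence-free Killing field) together with $K^i\partial_i\Phi=K\Phi=0$ gives $D^i(e^{-2\Phi}K^\flat_i)=0$. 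Hence Theorem \ref{th:skt} applies and $\mathscr{M}^*_\HE(M^{2n})$ carries a strong KT structure.

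For the induced symmetries I would invoke Theorem \ref{th:holkill} twice. Since (\ref{phig}) has been verified, $K$ is holomorphic and Killing with $K\Phi=0$, and $Y$ is holomorphic and Killing with $Y\Phi=0$, so the theorem delivers directly that $\alpha_K$ and $\alpha_Y$ are holomorphic and Killing vector fields on $\mathscr{M}^*_\HE(M^{2n})$. This settles the general KT part of the statement.

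In the CYT case I would appeal to Corollary \ref{cor:CYT}, which upgrades $K$ and $Y$ to $\h\nabla$-parallel holomorphic vector fields (with $K\Phi=Y\Phi=0$ still holding by Proposition \ref{prop:2}), so that Theorem \ref{th:phihnabla} becomes applicable. The only nontrivial input is the $(1,1)$-condition on $X^\flat\wedge K^\flat$ required there. For $X=K$ this is immediate, since $K^\flat\wedge K^\flat=0$. For $X=Y$ I would use $Y^\flat=\ii_I K^\flat$, so that with the decomposition $K^\flat=K^{1,0}+K^{0,1}$ one has $Y^\flat\wedge K^\flat=(iK^{1,0}-iK^{0,1})\wedge(K^{1,0}+K^{0,1})=2i\,K^{1,0}\wedge K^{0,1}$, which is manifestly of type $(1,1)$. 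One should also note the auxiliary condition $X\Phi=(-IX)\Phi=0$ holds in both cases, since $-IK=Y$ and $-IY=-K$. Theorem \ref{th:phihnabla} then shows that $\alpha_K$ and $\alpha_Y$ are $\h{\mathcal D}$-covariantly constant. Finally, the remark following Corollary \ref{cor:CYT} gives $[K,Y]=0$ on $M^{2n}$, and the commutator formula (\ref{xycom}) transports this to $\lsq\alpha_K,\alpha_Y\rsq=-\alpha_{[K,Y]}=0$, establishing commutativity.

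The proof is thus largely bookkeeping: its role is to check that each earlier theorem's hypotheses are met rather than to produce new estimates. The single genuine computation is the type check $Y^\flat\wedge K^\flat\in\Lambda^{1,1}$, and I expect the short $(1,0)/(0,1)$ decomposition above to be the only point requiring real care. The other subtlety worth flagging explicitly is that compactness enters not decoratively but essentially, through Proposition \ref{prop:2}, to secure $K\Phi=Y\Phi=0$; without the vanishing of these derivatives neither (\ref{phig}) nor the hypotheses of Theorems \ref{th:holkill} and \ref{th:phihnabla} would be available.
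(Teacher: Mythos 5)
Your proposal is correct and follows essentially the same route as the paper: Propositions \ref{prop:hol} and \ref{prop:2} supply the hypotheses for Theorems \ref{th:skt} and \ref{th:holkill}, Corollary \ref{cor:CYT} together with Theorem \ref{th:phihnabla} handles the CYT case, and commutativity descends from $[K,Y]=0$ via (\ref{xycom}). The only difference is that you make explicit some steps the paper compresses --- the expansion verifying $D^i(e^{-2\Phi}K^\flat_i)=0$ from $K$ Killing and $K\Phi=0$, the computation $Y^\flat\wedge K^\flat=2i\,K^{1,0}\wedge K^{0,1}$, and the check of $X\Phi=(-IX)\Phi=0$ for both choices of $X$ --- all of which are accurate.
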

\begin{proof}
If $(M^{2n}, g,\omega)$ is a compact KT manifold and $K$ is Killing, then the condition $D^i \big(e^{-2\Phi} K^\flat_i\big)=0$, see  (\ref{ktheta}),  required to describe the geometry of $\mathscr{M}_\HE^*(M^{2n})$ is satisfied as a consequence of the Proposition \ref{prop:2}.  Therefore, it follows from Theorem \ref{th:skt} that  $\mathscr{M}_\HE^*(M^{2n})$ has a strong KT structure.

From the hypothesis of the theorem $K$ is Killing. In addition, Proposition \ref{prop:hol} implies that $K$ is also holomorphic, and $Y=-IK$ is both holomorphic and Killing. As a result the associated vector fields on $\alpha_K$ and $\alpha_Y$ on the moduli space $\mathscr{M}_\HE^*(M^{2n})$ are holomorphic and Killing -- this follows from Theorem \ref{th:holkill}.

If $(M^{2n}, g,\omega)$ is a compact CYT manifold that satisfies (\ref{seqn}), then $K$ and $Y$ are holomorphic and $\h\nabla$-covariantly constant. As it has already been demonstrated that $\alpha_K$ and $\alpha_Y$ are holomorphic and Killing because CYT manifolds are a special case of KT manifolds, it remains to demonstrate that  $\alpha_K$ and $\alpha_Y$ are $\h{\mathcal{D}}$-covariantly constant.  This follows from  Theorem \ref{th:phihnabla} as both $K^\flat\wedge K^\flat=0$ and $Y^\flat\wedge K^\flat$ are clearly $(1,1)$-forms. Moreover, $\alpha_K$ and $\alpha_Y$ commute because $K$ and $Y$ commute.  The latter follows  because both are $\h\nabla$-covariantly constant and $K$ is holomorphic, i.e. $i_KH$ is a $(1,1)$-form. As a result their commutator,  given by $\ii_K\ii_Y H$, vanishes. \end{proof}

The above theorem admits various refinements provided a Perelman type of argument can be applied. This is especially the case provided that $(M^{2n}, g,\omega)$
is a CYT manifolds. In particular, one can show the following.

\begin{corollary}
Let $(M^{2n}, g,\omega)$ be a compact CYT manifold with either $dH=P=0$ or $P$ and $dH$ given in (\ref{ph}) and  (\ref{dh}), respectively, where $F(C)$ is a Hermitian-Einstein connection with $\lambda=0$ in the context of heterotic geometries. Then, $\mathscr{M}_\HE^*(M^{2n})$ admits two commuting holomorphic and $\h{\mathcal{D}}$-covariantly constant vector field $\alpha_K$ and $\alpha_Y$ provided that $K\not=0$.
\end{corollary}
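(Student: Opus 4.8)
The plan is to reduce the corollary to Theorem \ref{th:modkill}, whose hypotheses beyond compactness and the CYT condition are that the geometry solves the gradient/conformal equation (\ref{seqn}) and that $K$ is Killing. The point is that on a CYT manifold the scale-invariance equation holds for free, and that compactness upgrades it to (\ref{seqn}) through a Perelman-type argument; once this is in place the conclusion is immediate.

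First I would record the scale-invariance equation. Since $(M^{2n},g,\omega)$ is CYT, the holonomy of $\h\nabla$ lies in $SU(n)$ and the Ricci form vanishes, $\rho=0$. Contracting the Bianchi identity (\ref{bian2}) with $I^m{}_j\omega^{mn}$ exactly as in the proof of Proposition \ref{prop:hol}, and using $\rho=0$, yields
\be
\h R_{ij}-\tfrac14 P_{ij}=\h\nabla_i\theta_j~,
\ee
which is the bosonic scale-invariance equation (\ref{sinv}), or the first equation of (\ref{sgfeqn}) in the heterotic case, with $X^\flat=\theta$. In the heterotic case the companion gauge equation $\h\nabla_A^jF(C)_{ji}+\theta^jF(C)_{ji}=0$ follows from the Bianchi identity for $F(C)$ together with the assumption that $F(C)$ is Hermitian-Einstein with $\lambda=0$, so that the full system (\ref{sgfeqn}) holds.

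Next I would invoke compactness. For $dH=P=0$ the Perelman-style result of \cite{pw} (see also \cite{GFS}) shows that a compact steady generalised soliton is in fact a gradient soliton, i.e. (\ref{sinv}) implies (\ref{cinv}), which coincides with (\ref{seqn}); in the heterotic case the same passage, from (\ref{sgfeqn}) to (\ref{cgfeqn}), is established in \cite{gp2}. Consistency of the scale- and conformal-invariance equations then forces (\ref{sccon3}), that is $\h\nabla K=0$ with $K^\flat=\theta+2d\Phi$ (and $\ii_K F(C)=0$ in the heterotic case). Hence, provided $K\neq0$, Corollary \ref{cor:CYT}---equivalently Proposition \ref{prop:3} in the heterotic case---applies: $K$ and $Y=-IK$ are nonzero, holomorphic and $\h\nabla$-covariantly constant, therefore Killing, and they commute since $[K,Y]$ is expressed through $\ii_K\ii_YH$, which vanishes because $\ii_KH$ is a $(1,1)$-form. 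In addition, compactness together with Proposition \ref{prop:2} gives $K\Phi=Y\Phi=0$, supplying the dilaton invariance needed below.

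With (\ref{seqn}), the CYT condition and $K$ Killing all in hand, Theorem \ref{th:modkill} yields the statement directly: $\alpha_K$ and $\alpha_Y$ are commuting, holomorphic and $\h{\mathcal D}$-covariantly constant on $\mathscr{M}^*_{\HE}(M^{2n})$. The $\h{\mathcal D}$-parallelism is the content of Theorem \ref{th:phihnabla}, whose $(1,1)$-form hypothesis is automatic here, since $K^\flat\wedge K^\flat=0$ while $Y^\flat\wedge K^\flat$ is a $(1,1)$-form, as already noted in the proof of Theorem \ref{th:modkill}. The main obstacle is the Perelman-type step: deriving scale invariance from the CYT structure is routine algebra, but turning it into the conformal equation (\ref{seqn}) genuinely requires compactness and, in the heterotic case, the order-by-order identification of $\tilde R$ with $\breve R$ in $\alpha'$. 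As both versions of this step already exist in \cite{pw, GFS, gp2}, the proof is an assembly of these inputs rather than a new estimate.
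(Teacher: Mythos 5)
Your proof is correct and takes essentially the same route as the paper's own (two-line) proof, which likewise reduces the corollary to Theorem \ref{th:modkill} by establishing (\ref{seqn}) via the Perelman-type arguments of \cite{GFS, pw, gp2}. The additional detail you supply---deriving the scale-invariance system (\ref{sgfeqn}) with $X^\flat=\theta$ from the Bianchi identity and the Hermitian-Einstein condition on $F(C)$, extracting $\h\nabla K=0$ (and $\ii_K F(C)=0$) from consistency, and checking the $(1,1)$-form and dilaton-invariance hypotheses---is precisely the content the paper delegates to Proposition \ref{prop:3}, Corollary \ref{cor:CYT} and the surrounding discussion in section 5, so your write-up is a faithful, fleshed-out version of the intended argument.
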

\begin{proof}
It suffices to demonstrate that such a geometry satisfies (\ref{seqn}). Then the result follows from theorem (\ref{th:modkill}).  This is indeed the case as a consequence of a Perelman type of argument as described in \cite{GFS, pw, gp2}.
\end{proof}

\begin{remark}

The Hermitian-Einstein condition (\ref{HEcond1}) with $\lambda=0$ on connections of bundles over 4-dimensional manifold implies that the connections are anti-self-dual with respect to the orientation on $M^4$ given by the complex structure $I$, i.e. the volume 4-form is chosen as $1/2\, \omega\wedge\omega$, where $\omega$ is the Hermitian form.  In fact, the anti-self-duality condition is independent from the choice of complex structure on $M^4$. This is the case provided that any two choices of a complex structure induce the same orientation on $M^4$.  The main consequence of this is that the moduli space of anti-self-dual connections can exhibit a more intricate geometric structure induced by judicious choices of a complex structure on $M^4$.  Such structures have been explored in \cite{hitchin, moraru, witten, gp}.  Moreover, there is an additional coincidence in 4-dimensions because the closure of the 3-form torsion $H$ of a KT geometry on $M^4$ implies the Gauduchon condition and vice versa.  Furthermore, the CYT and HKT structures coincide in four dimensions. In such a case for $M^4$ compact and for $H\not=0$, a consequence of the second remark below Corollary \ref{cor:hktsym} implies that $M^4$ must be identified with $S^3\times S^1$ up to an identification with a discrete group. The properties of the moduli spaces of anti-self-dual connections on $S^3\times S^1$ have been extensively been explored in \cite{witten}, see also \cite{gp}.

It remains to consider the geometry of instanton moduli spaces for $(M^4, g, \omega)$ a compact KT manifold. This has already been dealt with in  Theorem \ref{th:modkill}. It should also be noted that if $M^4$ admits an oriented bi-KT structure, or equivalently oriented generalised K\"ahler \cite{hitchin, gualtieri}, then the analogue of the condition (\ref{ktheta})  associated to second KT structure  with connection $\breve \nabla$ that has torsion $-H$ is
\be
D^i \big(e^{-2\Phi} (-\theta_i+2\partial_i \Phi)\big)=0~.
\label{kbtheta}
\ee
This is because the Lee forms $\h\theta$ and $\breve\theta$ of the two KT structures that give the bi-KT structure on $M^4$ must satisfy  $\theta\equiv \h\theta=-\breve\theta$ for the associated complex structures $\h I$ and $\breve I$  to induce the same orientation on $M^4$, see \cite{hitchin, witten}.
Comparing (\ref{kbtheta}) with (\ref{ktheta}), we conclude that $\Phi$ is constant and $\theta$ is divergence free, i.e. $g$ is in the Gauduchon gauge.  In such a case, the geometry and symmetries
of the moduli space of instantons $\mathscr{M}^*_\asd$ have been already explored in \cite{hitchin, witten} and  \cite{gp}, respectively.
 \end{remark}

\vskip1cm
 \noindent {\it {Acknowledgements}}

 I thank Edward Witten for inspiration to work  on the geometry of moduli spaces and Jeff Streets for correspondence.

 \bibliographystyle{unsrt}

\end{document}